 \newtheorem{theorem}{Theorem}[section]
 \newtheorem{lemma}[theorem]{Lemma}
 \newtheorem{proposition}[theorem]{Proposition}
 \theoremstyle{definition}
 \newtheorem{definition}[theorem]{Definition}
 \theoremstyle{remark}
 \newtheorem{remark}[theorem]{Remark}
 \numberwithin{equation}{section}
\newcommand{\eps}{\varepsilon}
\newcommand{\abs}[1]{\left\vert#1\right\vert}
\newcommand{\set}[1]{\left\{#1\right\}}
\newcommand{\norm}[1]{\Vert#1\Vert}
\newcommand{\normm}[1]{| \! | \! | #1| \! | \! | }
\newcommand{\inner}[1]{\left(#1\right)}
\newcommand{\comi}[1]{\left<#1\right>}
\newcommand{\comii}[1]{\left<#1\right>}
\newcommand{\com}[1]{\left[#1\right]}
\begin{document}

%
%
%
%
%
%
%
%
%

\title[Gevrey smoothing effect for the Boltzmann equation]
 {Gevrey smoothing effect for   the spatially inhomogeneous  Boltzmann equations without  cut-off}

\author[H.Chen]{Hua Chen}

\address[H.Chen]{School of Mathematics and Statistics, and Computational Science Hubei Key Laboratory,   Wuhan University,  430072 Wuhan, China}

\email{chenhua@whu.edu.cn}


\author[X. Hu]{Xin Hu}
\address[X. Hu]{School of Mathematics and Statistics,   Wuhan University,  430072 Wuhan, China}
\email{hux@whu.edu.cn}

\author[W.-X. Li]{Wei-Xi Li}
\address[W.-X. Li]{School of Mathematics and Statistics, and Computational Science Hubei Key Laboratory,   Wuhan University,  430072 Wuhan, China}
\email{wei-xi.li@whu.edu.cn}

\author[J. Zhan]{Jinpeng Zhan}
\address[J. Zhan]{School of Mathematics and Statistics,   Wuhan University,  430072 Wuhan, China}
\email{jinpeng@whu.edu.cn}

\subjclass[2010]{35B65; 35Q20; 35H10}

\keywords{Boltzmann equation,  Gevrey regularity, subelliptic  estimate,  Non cut-off,  symbolic calculus}

\thanks{The work  was supported by Fok Ying Tung Education Foundation (151001) and NSFC (11771342, 11422106)}
\date{}

\begin{abstract}
In this article we study the Gevrey regularization effect  for  the spatially inhomogeneous  Boltzmann equation without angular cutoff.   This equation is partially elliptic in the velocity direction and degenerates in the spatial variable.     We consider the nonlinear Cauchy problem for the fluctuation around the Maxwellian distribution and prove  that any solution with mild regularity will become  smooth in Gevrey class  at positive time,  with Gevrey index  depending on the angular singularity.   Our proof relies on the symbolic calculus for the collision operator and the global subelliptic estimate  for the Cauchy problem of  linearized Boltzmann operator. 
\end{abstract}

\maketitle
   \section{Introduction and main result}

The Cauchy problem for the spatially inhomogeneous  Boltzmann equation reads
      \begin{equation}\label{1}
       \begin{array}{ll}
       \partial_{t}F+v\cdot\partial_{x}F=Q(F,F), \quad F|_{t=0}=F_0,
       \end{array}
      \end{equation}
       where $F(t,x,v)$ is a probability density   with a  given  datum $F_0$ at $t=0,$
       and   $x$  and $v$ stand respectively for the spatial and velocity variables and we   consider here the important   physical dimension $n=3$ and suppose   both vary in the whole space $\mathbb{R}^3.$   When the density function $F$ doesn't depend on the spatial variable $x$ we get the spatially homogeneous Boltzmann equation:
       \begin{eqnarray*}
       	 \partial_{t}F =Q(F,F), \quad F|_{t=0}=F_0. 
       \end{eqnarray*} 
       The  bilinear operator  $Q$ on the right-hand side of \eqref{1}  stands for the collision part  acting  only on the velocity,  so    the spatially inhomogeneous  Boltzmann equation degenerates in $x,$ which is one of the main  difficulties in the regularity theory.  In addition to the degeneracy, another major difficulty  arises from the  nonlocal property of the collision operator $Q,$ which is defined      
      for suitable functions $F$ and $G$ by
      \begin{equation}\label{collis}
       Q(G,F)(t,x,v)=\int_{\mathbb{R}^3}\int_{\mathbb S^2}B(v-v_*,\sigma)(G^{\prime}_{*}F^{\prime}- G_* F)dv_{*}d\sigma,
       \end{equation}
where  and throughout the paper we write $F^{\prime}=F(t,x,v^{\prime}), ~F=F(t,x,v), ~G^{\prime}_{*}=G(t,x,v^{\prime}_{*})$ and $G_*=G(t,x,v_*)$ for short, and the pairs  $(v, v_*)$ and   $(v^{\prime}, v^{\prime}_*) $   stand  respectively for  the velocities of  particles before and after collision, with the following momentum and  energy conservation rules fulfilled,
\begin{equation*}
      v^{\prime}+v^{\prime}_{*}=v+v_{*} , ~~|v^{\prime}|^{2}+|v^{\prime}_{*}|^{2}=|v|^{2}+|v_{*}|^{2}.
\end{equation*}
From the above relations   we have  the so-called $\sigma$-representation,  with $\sigma\in\mathbb S^{2},$
 \begin{equation*}
 \left\{
 \begin{aligned}
   &v'  = \frac{v+v_*}{2} + \frac{ |v-v_*|}{2} \sigma, \\
  &v'_* = \frac{v+v_*}{2} - \frac{ |v-v_*|}{2} \sigma.
 \end{aligned}
 \right.
\end{equation*}
The cross-section $B(v-v_*,\sigma)$ in \eqref{collis}  depends on the relative velocity $\abs{v-v_*}$ and the  deviation angle $\theta$  with 
 \begin{equation*}
 \cos \theta = \big  \langle \inner{ v-v_*}/|v-v_*|, \ \sigma\big\rangle.
\end{equation*}
Here  we denote by $\langle \cdot, \cdot\rangle$  the scaler product in $\mathbb R^3.$   
Without loss of generality we may assume that $B(v-v_*,\sigma)$ is supported
on the set $\theta\in [0,\pi/2]$ where $\langle v-v_*,\sigma\rangle \geq 0$,  since as usual
 $B$ can be replaced  by its symmetrized version, and  furthermore  we  may suppose it takes the following form:
  \begin{equation}\label{kern}
 { B}(v-v_*, \sigma) =  |v-v_*|^\gamma  b (\cos \theta),
 \end{equation}
 where $\gamma\in ]-3, 1].$ Recall $\gamma=0$ is the Maxwellian molecules case and meanwhile  the cases of  $-3<\gamma< 0$ and  $0<\gamma\leq1$  are called respectively  soft   potential and hard potential.     In this paper we will restrict our attention to  the cases of  Maxwellian molecules and hard potential, i.e., $0\leq \gamma\leq1.$   
 Furthermore we are concerned about   singular cross-sections, also called non cut-off sections,  that is,  the angular part $b(\cos\theta)$ has singularity near $0$ so that 
 \begin{eqnarray*}
	\int_0^{\pi/2}\sin \theta   b(\cos \theta)\ d\theta=+\infty.
\end{eqnarray*}
 Precisely, we suppose  $b$ has the following expression  near $\theta=0$:  
  \begin{equation}\label{angu}
 0 \leq  \sin \theta   b(\cos \theta)  \approx   \theta^{-1-2s},
 \end{equation}
 where and throughout the paper  $p\approx q$ means  $C^{-1}q\leq p\leq Cq$ for
some  constant $C\geq 1$.   Note that the cross-sections of  type \eqref{kern}  include the potential of inverse power law as a  typical physical model.

It is well understood nowadays  that wether or not the angular singularity occurs      is closely linked with the regularization effect and propagation property in time.    If the angular collision kernel is integrable (also called Grad's cut-off assumption),  then  similar as  hyperbolic  equations   the singularity or regularity of solutions to Boltzmann equation  usually  propagates in time,   that is  the solutions should have precisely  the same singularity or regularity as initial data.  The understanding of this propagation property has made very substantial development,  and we just  mention the resutls of propagation in the Gevrey class setting  by      Desvillettes-Furioli-Terraneo \cite{MR2465814} and  Ukai \cite{Ukai84},    the argument therein  working  well for both cut-off and non cut-off cases.    The  subject of  cut-off Boltzmann equation  has a long history and  there is a vast literature on it, investigating the well-posedness, propagation property,  moments and positivity and so on.   For the mathematical treatment of cut-off Boltzmann equation, we  refer to   the books of  Cercignani  \cite{MR1069558} and Cercignani-Illner-Pulvirenti \cite{MR1307620} for instance,   and  more  classical references,  concerned with the cut-off and non cut-off cases , can be found in  the  surveys of Alexandre  \cite{MR2556715} and  Villani \cite{Villani02}.  

When the singularity is involved,  
     the properties herein are quite different from the ones observed in the cut-off case. In fact,  regularization effect occurs for the Cauchy problem of non cut-off Boltzmann equation,  due to    diffusion properties   caused by the angular singularity. 
     Then the solution should become smooth at positive times, as it does for solutions to the heat equation.  The mathematical treatment of the regularization properties  goes back to Desvillettes \cite{MR1324404,Desv97} for a one-dimensional model of the Boltzmann equation.             Later on,   Alexandre-Desvillettes-Villani-Wennberg  \cite{ADVW00}  establish the optimal regular estimate in $v$ for the collision operator after the earlier work of Lions \cite{ Lions98},  and since then substantial developments have been achieved, cf. \cite{
AHL, MR2679369, MR2847536, AMUXY1, MR2807092,MR2784329, Mou2, Mou1}   for instance and the references therein.     These works    show   that Boltzmann operator behaves locally as a fractional Laplacian:
     \begin{eqnarray*}
        (-\triangle_v)^s+\textrm{lower order terms},
     \end{eqnarray*}
     and  more precisely,  from the point of the   global  view the linearized Boltzmann operator around Maxwellian distribution behaves essentially as  
\begin{eqnarray*}
  \comii
    v^\gamma\inner{-\triangle_v-\inner{v\wedge \partial_v}^2+\abs v^2}^s+\textrm{lower order terms},
\end{eqnarray*}
  where $v\wedge \partial_v$ is the cross product of vectors $v$ and $\partial_v.$
 This diffusion property indicates that the spatially homogeneous equation should   behave as   fractional  heat equation,  and we may expect    solutions to Cauchy problem will enjoy better regularity  at positive time than    initial's.   Strongly related to this regularization effect  is another well known  Landau equation, taking into account all grazing collisions.    So far there have been
extensive works  on 
the  regularity,   in a wide variety of different settings,  of solutions to the homogeneous Boltzmann equation
without angular cut-off;  see for instance  \cite{ADVW00, MR2149928, MR2476677,  MR2557895, MR2425602,
  MR1324404,Desv97, DesvVillani00-2, DesvWennberg04, MR3680949, MR2425608, MR3485915,
  MR2523694,  MR2476686, MR2679746,  MR3325244, Villani1999} and references therein.    We refer to the very recent work of Barbaroux-Hundertmark-Ried-Vugalter \cite{Barbaroux2017},  where they  prove  any weak solution of the fully non-linear homogenous Boltzmann equation for Maxwellian molecules belongs to the Gevrey class   at   positive time, and the Gevrey index therein is optimal.

   Compared with the homogeneous case,  the situation  becomes more intricate
  for spatially inhomogeneous Boltzmann equation,  and  much less is known for the regularization properties. The main difficulty  lies in the degeneracy in spatially variable since diffusion only occurs in the velocity, and this is quite different from  the spatially homogeneous case where  we have  elliptic properties for solutions to Boltzmann equation.      Nevertheless  we may  expect  some hypoelliptic effects    due to  the non trivial interaction between the transport operator  and  the
collision operator.   To see this let us first mention the  velocity-averaging lemma, which  is  an important  tool for  transport equations and  is also applied extensively to       the study   of the Boltzmann equation.  The velocity averaging Lemma shows  the 
 velocity-averages   of solutions to transport equations    are smoother in spatial variable than the distribution function  itself;  see for instance the works of Golse-Lions-Perthame-Sentis \cite{MR923047}  and   Golse-Perthame-Sentis \cite{MR808622}.   Other tools from microlocal analysis are also developed for the hypoelliptic properties of  Boltzmann equation in the setting of   $L^2$ norm,   and  we refer the interested readers  to the works of Bouchut \cite{Bouchut02}  for  the use of  H\"ormander's  techniques  and  Alexandre-Morimoto-Ukai-Xu-Yang \cite{ALEXANDRE20082013} for the application of  uncertainty principle to  kinetic equations,  as well as the work \cite{AHL} involving  the multiplier method.   To understand the intrinsic hypoelliptic structure     
Morimoto-Xu \cite{MR2359105} initiated to study the following  simplified Boltzmann model 
\begin{eqnarray*}
\partial_t+v\partial_x+\sigma_0\big(-  \Delta_v\big)^s,
\end{eqnarray*}  
 where $\sigma_0>0$ is a constant, and   using the analysis of the commutator between transport part and diffusion part they obtain the subelliptic estimate in time-space variables, see also \cite{MR2763329, MR2876831}   for the further improvement on the exponent of subelliptic estimate.  Note the above operator  is just a local model of Boltzmann equation, inspirited by  the diffusion property in $v$ velocity obtained in Alexandre-Desvillettes-Villani-Wennberg  \cite{ADVW00}. 
Furthermore  in the joint work \cite{AHL} of the third author with Alexandre and H\'erau,     the global sharp estimate is obtained for the linearized Boltzmann operator rather than the model operators,  using additionally  symbolic calculus for the collisional cross-section; see also \cite{MR3102561, HK2011,MR3193940} for  the earlier  works  on  the hypoelliptic properties of other related models.    Let us mention that the aforementioned works about hypoellipticity don't involve the initial data,  and in fact  the time variable $t$ therein is supposed to vary in the whole space so that  Fourier analysis can be applied when deriving the subelliptic estimate in time variable. In this work we are concerned with the hypoelliptic structure for the Cauchy problem of Boltzmann equation and thus  the   initial data will be  involved in the analysis.   
 
Now we mention the regularity results for the spatially inhomogeneous Boltzmann equation. In fact the well-posedness for general initial data    is  still a mathematically challenging problem, and so far there are much fewer results.  In 1989,   DiPerna-Lions \cite{DipernaLions89} established global renormalized weak solutions  in the cut-off case for general initial data without a size restriction, and   Alexandre-Villani \cite{MR1857879}  in  2002  proved  the existence of DiPerna-Lions' renormalized weak solutions in non cut-off case.  Under a mild regularity assumption on the initial data, the local-in-time  existence and uniqueness are obtained by Alexandre-Morimoto-Ukai-Xu-Yang \cite{MR2679369}.   We also mention the earlier work of  Ukai \cite{Ukai84}    where the well-posedness in anisotropic Gevrey space  is established by virtue of the Cauchy-Kovalevskaya  theorem.   When considering the perturbation around Maxwellian distribution, the well-posedness in weighted Sobolev space is obtained independently by  Gressman-Strain \cite{MR2784329} and   
Alexandre-Morimoto-Ukai-Xu-Yang \cite{AMUXY1, MR2793203},  where the   novelty is the introduction of a non isotropic triple norm which  enables to capture the sharp estimate to close the energy.   We will explain later the non isotropic triple norm in detail. 
The  DiPerna-Lions' renormalized solutions are quite weak so the uniqueness is unknown.  It is natural to  expect a higher-order regularity  of weak solutions and this still remains a challenging  problem up to now.    We refer to the  very recent works of   Golse-Imbert-Mouhot-Vasseur \cite{GIMV},  Imbert-Mouhot  \cite{IM2, IM} and  Imbert-Silvestre \cite{IS} for the  progress on this regularity issue,  where  H\"older continuity of   $L^\infty$ weak solutions is obtained   by using  the Harnack inequality and De Giorgi-Nash-Moser theorem.   As for the spatially inhomogeneous Landau equation  the $C^\infty$ smoothing  of bounded weak solutions is obtained by   Henderson-Snelson \cite{2017arXiv170705710H} and Snelson \cite{2018arXiv180510264},      where  the pointwise Gaussian upper bound plays a crucial role; see also the  recent work of Imbert-Mouhot-Silvestre \cite{2018arXiv180406135I} for an attempt to establish  upper bounds   for  Boltzmann equation.        On the other hand
 a long lasting conjecture on the smoothing effect expects   better   regularity of  solutions at positive time than initial's and asks furthermore how much better.    Under a mild regularity assumption on the initial data, the $C^\infty$ smoothing effect  is obtained by Chen-Desvillettes-He \cite{ChenDesvHe07} for inhomogeneous Landau equation and  by  Alexandre-Morimoto-Ukai-Xu-Yang \cite{MR2679369, ALEXANDRE20082013}  for Boltzmann equation. In this paper we are concerned with a higher-order regularity of mild solutions at positive time, inspirited by the Gevrey regularization effect for  the  fractional heat equation,  and our main tool here will be the  symbolic calculus developed in \cite{AHL}.  Let us mention the Gelfand-Shilov and Gevrey smoothing effect has been obtained by  Lerner-Morimoto-Pravda-Starov-Xu \cite{LERNER2015459} for  non-cutoff Kac equation, a one-dimensional Boltzmann model. Here we will further to investigate the most physical  three-dimensional Boltzmann equation.    We hope the present work may give  better insights into the regularity issue of inhomogeneous  Boltzmann equation. 
 
 We will restrict our attention to  the fluctuation around the Maxwellian distribution. 
 Let 
 \begin{equation*}
      \mu(v)=(2\pi)^{-3/2}e^{-|v|^{2}/2}
\end{equation*}
be the normalized Maxwellian distribution. 
Write   solution  $F$ of \eqref{1} as  $F=\mu+\sqrt{\mu}f$ and  accordingly  $F_0=\mu+\sqrt{\mu}f_0$ for the initial datum.  Then the fluctuation $f$ satisfies the Cauchy problem
\begin{eqnarray}\label{eqforper}
\left\{
\begin{aligned}
     & \partial_{t}f+v\cdot\partial_{x}f-\mu^{-{1\over 2}}Q(\mu,\sqrt{\mu}f)-\mu^{-{1\over 2}}Q(\sqrt{\mu}f,\mu)
      =\mu^{-{1\over 2}}Q(\sqrt{\mu}f,\sqrt{\mu}f),\\
& f|_{t=0}=f_0.
      \end{aligned}
      \right.
\end{eqnarray}
We will use throughout the paper the notations as follows.  Define by $\mathcal L$ the linearized collision operator, that is 
\begin{equation}\label{linearbol}
 \mathcal L f=\mu^{-1/2}Q(\mu,\sqrt{\mu} f)+\mu^{-1/2}Q(\sqrt{\mu}f,\mu), 
 \end{equation}
 and denote 
 \begin{eqnarray*}
 	\Gamma(g,h)=\mu^{-1/2}Q(\sqrt{\mu}g,\sqrt{\mu}h).
 \end{eqnarray*}
Furthermore denote by $P$ 
  the linearized Boltzmann operator:   
\begin{equation}\label{linbol}
P=\partial_{t}+v\cdot\partial_{x}-\mathcal L. 
\end{equation}
So the Cauchy problem \eqref{eqforper} for  the perturbation $f$ can be rewritten as
\begin{equation}\label{3}
      Pf=\Gamma(f,f),\quad f|_{t=0}=f_0.
\end{equation}
Note that the global existence in Sobolev space for the above Cauchy problem is obtained by Alexandre-Morimoto-Ukai-Xu-Yang \cite{AMUXY1, MR2793203},     taking advantage of a triple norm  defined by 
   \begin{equation}\label{trnor}
   \normm f^2\stackrel{\rm def}{=} \int B(v-v_*,\sigma) \mu_*
\inner{f-f'}^2 +\int B(v-v_*,\sigma) f_*^2 \inner{\sqrt{\mu'}-
\sqrt{\mu}}^2,
 \end{equation}
  where the integration is over $  \mathbb R_v^3\times\mathbb R_{v_*}^3\times\mathbb
S_\sigma^2.$  And we refer  to the work of Gressman-Strain \cite{MR2784329} for the global  existence in Sobolev space when $x$ varies in a torus. Denote by $H^k(\mathbb R^6)$ the classical Sobolev space. For any $\ell\in \mathbb R$ define
\begin{eqnarray*}
	H^k_\ell(\mathbb R^6)=\set{u\in H^k(\mathbb R^6); \quad \comi v^\ell\in H^k(\mathbb R^6)},
\end{eqnarray*}  
where and throughout the paper we use the notation
\begin{eqnarray*}
	\comi{\cdot}=\inner{1+\abs{\cdot}^2}^{1/2}.
\end{eqnarray*}

\begin{theorem}
[Global existence in  \cite{MR2793203}]
\label{thex}	 Assume that the cross-section satisfies \eqref{kern} and \eqref{angu}  with $0 < s < 1$  and $\gamma + 2 s > 0$.  Suppose 
the initial data  $f_0\in H^k_\ell(\mathbb R^6)$ with $k\geq 6$ and $\ell>3/2+2s+\gamma.$ Then 
 the   Cauchy problem \eqref{3} admits a global solution $f\in L^\infty\inner{[0,+\infty[;~H^k_\ell(\mathbb R^6)}$, provided $\norm{f_0}_{H^k_\ell(\mathbb R^6)}$ is small enough. Moreover
a constant $C$ exists such that   
 \begin{eqnarray*}
 	\sup_{t\geq 0}\norm{f(t)}_{H^k_\ell}+\sum_{\abs\alpha+\abs\beta\leq k}  \Big(\int_0^{+\infty}\Big(\int_{\mathbb R^3} \normm{\comi v^\ell\partial_{x}^\alpha\partial_v^\beta f(t)}^2dx\Big)dt\Big)^{1/2} \leq 
 C \norm{f_0}_{H^k_\ell}.
 \end{eqnarray*}
 Recall $\normm{\cdot}$ is defined in \eqref{trnor}.
\end{theorem}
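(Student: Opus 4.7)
The plan is to prove global existence via a perturbative energy method around the Maxwellian, where the coercivity of the linearized operator $\mathcal{L}$ measured in the triple norm $\normm{\cdot}$ provides the dissipation needed to absorb the quadratic nonlinearity $\Gamma(f,f)$ when $\|f_0\|_{H^k_\ell}$ is small. The starting point is to establish the spectral estimate $(\mathcal{L}f,f)_{L^2_v} \gtrsim \normm{(\mathbf I-\mathbf P)f}^2$, where $\mathbf P$ is the $L^2_v$-projection onto the five-dimensional kernel spanned by $\sqrt\mu, v\sqrt\mu, |v|^2\sqrt\mu$, together with the upper bound $|(\mathcal{L}f,g)| \lesssim \normm{f}\,\normm{g}$; these are standard consequences of the sharp coercivity/continuity estimates for the non cut-off collision operator proved in \cite{ADVW00} and refined in \cite{AMUXY1}. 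I would also record the weighted versions in $\comi v^\ell$ needed to compensate for the loss caused by the factor $|v-v_*|^\gamma$ appearing in the kernel.

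Next I would derive the basic nonlinear trilinear bound
\begin{equation*}
 \bigl|\bigl(\comi v^{2\ell}\partial_x^\alpha\partial_v^\beta \Gamma(f,g),\,\partial_x^\alpha\partial_v^\beta h\bigr)_{L^2_{x,v}}\bigr|
 \;\lesssim\; \|f\|_{H^k_\ell}\,\normm{\comi v^\ell g}_{H^k_x}\,\normm{\comi v^\ell h}_{H^k_x},
\end{equation*}
via the Leibniz-type formula for $\Gamma$, a careful distribution of derivatives between the two arguments based on Sobolev embedding ($H^2(\mathbb R^6)\hookrightarrow L^\infty$ motivating the choice $k\ge 6$), and the triple-norm continuity of $\Gamma$. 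Differentiating \eqref{3} by $\partial_x^\alpha\partial_v^\beta$ for $|\alpha|+|\beta|\le k$, multiplying by $\comi v^{2\ell}\partial_x^\alpha\partial_v^\beta f$ and integrating in $x,v$ then yields, after summing in $(\alpha,\beta)$,
\begin{equation*}
 \tfrac12 \tfrac{d}{dt}\|f(t)\|_{H^k_\ell}^2
 + c\!\sum_{|\alpha|+|\beta|\le k}\int_{\mathbb R^3_x}\!\normm{\comi v^\ell\partial_x^\alpha\partial_v^\beta f}^2 dx
 \;\lesssim\; \|f\|_{H^k_\ell}\sum_{|\alpha|+|\beta|\le k}\int_{\mathbb R^3_x}\!\normm{\comi v^\ell\partial_x^\alpha\partial_v^\beta f}^2 dx,
\end{equation*}
where the transport term $v\cdot\partial_x$ is skew-symmetric and drops out, and the commutator $[\partial_v^\beta, v\cdot\partial_x]$ is absorbed into lower-order $H^k_\ell$ terms via the hypothesis $\gamma+2s>0$ (needed so that the velocity weight can accommodate the $\partial_v$ falling on the transport term).

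The main obstacle is closing the estimate for the hydrodynamic (macroscopic) part $\mathbf P f$, on which $\mathcal{L}$ vanishes, combined with the degeneracy in $x$. I would handle this through a macroscopic--microscopic decomposition $f = \mathbf P f + (\mathbf I-\mathbf P)f$: the microscopic part is controlled by the dissipation of $\mathcal{L}$; for the macroscopic coefficients $(a,b,c)(t,x)$ in $\mathbf P f$, substitute $f=\mathbf P f+(\mathbf I-\mathbf P)f$ into \eqref{3} and extract a local-conservation-law system with source terms quadratic in $(\mathbf I-\mathbf P)f$, in the style of Guo's hypocoercive scheme. Applying elliptic/Poincaré-type estimates to this system (or, equivalently, constructing an auxiliary functional with well-chosen weights in $\partial_x$-dual norms) yields dissipation in $x$ for $\mathbf P f$ of the form $\|\nabla_x\mathbf P f\|_{H^{k-1}_\ell}^2$, and combining with the microscopic estimate above produces
\begin{equation*}
 \tfrac{d}{dt}\mathcal E(f) + c\,\mathcal D(f) \le C\sqrt{\mathcal E(f)}\,\mathcal D(f),
\end{equation*}
for an energy $\mathcal E(f)\sim\|f\|_{H^k_\ell}^2$ and dissipation $\mathcal D(f)$ dominating $\sum\int\normm{\comi v^\ell\partial^\alpha f}^2dx$. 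Smallness of $f_0$ then gives a standard continuation argument: the local existence from \cite{MR2679369} together with the inequality shows that $\mathcal E(f(t))$ stays below the threshold for all $t\ge 0$, and integration in time delivers the asserted triple-norm bound. The only delicate point I would flag is verifying that the weight power $\ell>3/2+2s+\gamma$ is precisely what the trilinear estimate and the commutator with $v\cdot\partial_x$ demand; the lower bound $\ell>3/2$ is forced by the $L^\infty_v\hookrightarrow L^2_v(\comi v^{-2\ell})$ embedding, while the extra $2s+\gamma$ compensates for the order of the non-local operator hidden in $\normm{\cdot}$.
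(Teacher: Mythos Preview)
The paper does not prove this statement at all: Theorem~\ref{thex} is stated with the explicit attribution ``Global existence in~\cite{MR2793203}'' and is quoted only to guarantee that solutions satisfying the hypothesis~\eqref{smacon} of the main Theorem~\ref{maith} actually exist (see the remark immediately following Theorem~\ref{maith}). There is no proof, sketch, or argument for it anywhere in the paper, so there is nothing to compare your proposal against.

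That said, your outline is a reasonable summary of the strategy used in the cited reference~\cite{MR2793203} (and~\cite{AMUXY1}): coercivity of $-\mathcal L$ in the triple norm on the microscopic part, trilinear estimates for $\Gamma$ in the $\normm{\cdot}$ norm, a macroscopic--microscopic decomposition with a hypocoercive recovery of dissipation for $\mathbf P f$, and a continuity argument closing under smallness of the data. If you intend to reproduce that proof, you should be aware that the actual argument in~\cite{MR2793203} is considerably more delicate than your sketch indicates, particularly regarding the weighted commutator estimates and the precise form of the macroscopic equations used to recover $\nabla_x \mathbf P f$; your explanation of the role of the threshold $\ell>3/2+2s+\gamma$ is also somewhat heuristic. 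But none of this bears on the present paper, which simply takes the result as input.
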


In this work we  will improve the Sobolev regularity at positive time  in the framework of Gevrey class. 

   
   \begin{definition}
   	Let $\mu\geq 1$ and we denote by $G^{\mu}$ the  space of all the $C^\infty$ functions $u(x,v)$ satisfying that a constant $C$ exists such that 
   \begin{eqnarray*}
 \forall\ \abs{\alpha}+\abs\beta\geq 0,\quad   \norm{	\partial_x^\alpha\partial_v^\beta u}_{L^2(\mathbb R_{x,v}^6)} \leq C^{\abs\alpha+\abs\beta+1}\big(\inner{\abs{\alpha}+\abs{\beta}}!\big)^\mu.
   \end{eqnarray*}
   Here $\mu$ is called the Gevrey index. We can also define an   anisotropic Gevrey space $G^{\mu_1,\mu_2}, \mu_j\geq1,$ which is consist of all  the  $C^\infty$ functions $u(x,v)$ satisfying that a constant $C$ exists such that 
   \begin{eqnarray*}
 \forall\ \abs{\alpha}+\abs\beta \geq 0,\quad   \norm{	\partial_x^\alpha\partial_v^\beta u}_{L^2(\mathbb R_{x,v}^6)} \leq C^{\abs\alpha+\abs\beta+1}\inner{\abs{\alpha}!}^{\mu_1}\inner{\abs{\beta}!}^{\mu_2}.
   \end{eqnarray*}
   \end{definition} 
   
   Before stating our main result we provide a representation  of the triple norm $\normm{\cdot}$ defined by \eqref{trnor} in term of a pseudo-differential operator.  Precisely we have (see Lemma \ref{lemdiff} in the next section)
   \begin{eqnarray*}
 \normm{u }^2  
  \approx  \norm{(a^{1/2})^w u}_{L^2(\mathbb R_v^3)}^2,  
\end{eqnarray*}
where $(a^{1/2})^w$ stands for the Weyl quantization with symbol $a^{1/2}.$   The definition of $a^{1/2}$ as well as some basic facts  on the symbolic calculus will be given in Subsection  \ref{subsec21} below  and Appendix \ref{secapp}.

 \begin{theorem}\label{maith} Assume that the cross-section satisfies \eqref{kern} and \eqref{angu}  with $0 < s < 1$  and $\gamma \geq 0.$
 	Let  $f\in L^\infty\inner{[0,+\infty];~H^2(\mathbb R^6)}$  be any solution to \eqref{3} such that 
 	\begin{equation}
 	\label{smacon}	
 	\sup_{t\geq 0}\norm{f(t)}_{H^2(\mathbb R^6)}+ \sum_{\abs\alpha+\abs\beta\leq 2}\inner{\int_0^{+\infty}  \norm{(a^{1/2})^w  \partial_x^\alpha\partial_v^\beta f(t)}_{L^2(\mathbb R^6)}^2dt}^{1/2} \leq 
 \epsilon_0
 	\end{equation}
 	for some constant $\epsilon_0>0.$  Suppose  $\epsilon_0$ is small enough.  Then
      $f(t)\in G^{\frac{1+2s}{2s}}$ for all $t>0.$   Moreover there exists a constant $C\geq 1$ depending only on  $s,\gamma$ and $\epsilon_0$ above such that for any multi-indices $\alpha$ and $\beta$ with $\abs\alpha+\abs\beta\geq0,$
  \begin{eqnarray*}
   	\sup_{t>0} \phi(t)^{\frac{1+2s}{2s}\inner{\abs\alpha+\abs\beta}}\norm{\partial_{x}^\alpha\partial_v^\beta f(t)}_{L^2(\mathbb R^6)}\leq  C^{\abs\alpha+\abs\beta+1}\big(\inner{\abs{\alpha}+\abs{\beta}}!\big)^{\frac{1+2s}{2s}}
  \end{eqnarray*}  
with $\phi(t)\stackrel{\textrm{def}}{=}\min\big\{t,  1\big\},$   or equivalently, 
  \begin{eqnarray*}
  	\sup_{t>0}\big\|e^{ c_0\phi(t)^{ \frac{1+2s}{2s}}\inner{- \Delta_{x,v}}^{\frac{s}{1+2s}}}f(t)\big\|_{ L^2(\mathbb R^6)}<+\infty
  \end{eqnarray*}
  for some constant $c_0>0.$
 \end{theorem}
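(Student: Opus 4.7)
The plan is to prove the stated derivative bound by induction on $N=\abs\alpha+\abs\beta$ with the vanishing time weight $\phi(t)^{\mu N}$, where $\mu=\frac{1+2s}{2s}$. For a time horizon $T\in(0,+\infty]$ I introduce
\begin{equation*}
Y_N(T)=\sum_{\abs\alpha+\abs\beta=N}\left\{\sup_{0<t\le T}\phi(t)^{\mu N}\norm{\partial_x^\alpha\partial_v^\beta f(t)}_{L^2}+\Bigl(\int_0^T\phi(t)^{2\mu N}\norm{(a^{1/2})^w\partial_x^\alpha\partial_v^\beta f(t)}_{L^2}^2\,dt\Bigr)^{1/2}\right\},
\end{equation*}
and aim for the bound $Y_N(T)\le C_0 K^N (N!)^\mu$ uniformly in $T$, for two appropriate constants $C_0,K$. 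The smallness hypothesis \eqref{smacon} supplies the base cases $N\le 2$, while the equivalent exponential formulation stated in the theorem follows from this polynomial–factorial bound by a standard Gevrey interpolation.

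The analytic centerpiece will be a global subelliptic energy estimate for the linearized Boltzmann operator $P$: for any $u(t,x,v)$ vanishing at $t=0$,
\begin{equation*}
\sup_{0\le t\le T}\norm{u(t)}_{L^2}^2+\int_0^T\norm{(a^{1/2})^w u}_{L^2}^2\,dt+\norm{\comii{D_{t,x}}^{\frac{s}{1+2s}}u}_{L^2([0,T]\times\mathbb R^6)}^2 \lesssim \int_0^T\norm{Pu}_*^2\,dt,
\end{equation*}
with $\norm{\cdot}_*$ a dual norm to the triple norm on test functions. Following the symbolic-calculus framework of \cite{AHL}, this is obtained from the pseudo-differential realisation of $\normm{\cdot}$ via $(a^{1/2})^w$ together with commutators of $v\cdot\partial_x$ against $(a^{1/2})^w$, which generate the Kolmogorov-type gain in $x$. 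The exponent $\frac{s}{1+2s}$ is precisely what dictates the target Gevrey index $\mu=1+\frac{1}{2s}$.

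For the inductive step, fix $\abs\alpha+\abs\beta=N$ and apply the subelliptic estimate above to $u=\phi(t)^{\mu N}\partial_x^\alpha\partial_v^\beta f(t)$, which vanishes at $t=0$. The source $Pu$ decomposes into three families: (i) the time-derivative of the weight, $\mu N\phi^{\mu N-1}\phi'\partial_x^\alpha\partial_v^\beta f$; (ii) the transport commutator $[\partial_x^\alpha\partial_v^\beta,v\cdot\partial_x]$, whose typical term $\partial_{x_j}\partial_x^\alpha\partial_v^{\beta-e_j}f$ trades one velocity derivative for one spatial derivative, and the extra $x$-derivative is then reabsorbed using the $s/(1+2s)$ subelliptic gain at the cost of $2s/(1+2s)$ of a velocity derivative, turning it into a genuine lower-order contribution controlled by $Y_{N-1}$; and (iii) the Leibniz expansion
\begin{equation*}
\phi^{\mu N}\partial_x^\alpha\partial_v^\beta\Gamma(f,f)=\sum_{\alpha'\le\alpha,\,\beta'\le\beta}\binom{\alpha}{\alpha'}\binom{\beta}{\beta'}\phi^{\mu N}\Gamma\bigl(\partial_x^{\alpha'}\partial_v^{\beta'}f,\partial_x^{\alpha-\alpha'}\partial_v^{\beta-\beta'}f\bigr),
\end{equation*}
treated via a bilinear symbolic bound $\norm{\Gamma(g,h)}_*\lesssim\norm{(a^{1/2})^w g}_{L^2}\norm{h}_{H^2}+\norm{g}_{H^2}\norm{(a^{1/2})^w h}_{L^2}$ (again a consequence of the calculus of \cite{AHL}), combined with the low-order smallness $Y_0,Y_1,Y_2\lesssim\epsilon_0$.

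The principal obstacle is the combinatorial bookkeeping for (iii). The bilinear bound yields schematically $\sum_{k=0}^N\binom{N}{k}Y_k\,Y_{N-k}$, and to close the induction at $(N!)^\mu K^N$ one must exploit the identity $\binom{N}{k}(k!)^\mu((N-k)!)^\mu=\binom{N}{k}^{1-\mu}(N!)^\mu$ whose $k$-sum converges precisely because $\mu-1=\frac{1}{2s}>0$; the resulting contribution of size $C\epsilon_0(N!)^\mu K^N$ is then absorbed into the left-hand side once $\epsilon_0$ is small enough. The equally delicate term (i) is what pins down the exact power $\mu=\frac{1+2s}{2s}$: only this choice makes $\phi^{\mu N-1}$ interpolate between $Y_{N-1}$ and the subelliptic $\comii{D_t}^{s/(1+2s)}$-gain with the correct integrability near $t=0$, producing a bound of the form $C\mu N\,K^{N-1}((N-1)!)^\mu$ which is absorbed into $C_0 K^N (N!)^\mu$ by taking $K$ large. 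The fact that the transport commutator mixes $x$- and $v$-derivatives is precisely the reason the induction must be run on the \emph{total} order $\abs\alpha+\abs\beta$, so that the isotropic Gevrey class $G^\mu$—rather than the anisotropic $G^{\mu_1,\mu_2}$—is the natural conclusion.
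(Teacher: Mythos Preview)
Your induction on the total order $N=\abs\alpha+\abs\beta$ has a genuine gap at the transport commutator. The term $[\partial_x^\alpha\partial_v^\beta,v\cdot\partial_x]f$ produces $\sum_j\beta_j\,\partial_x^{\alpha+e_j}\partial_v^{\beta-e_j}f$, which carries the \emph{same} total order $N$ together with a coefficient of size $\abs\beta\le N$; it is not ``controlled by $Y_{N-1}$'' as you claim. No amount of the $s/(1+2s)$ spatial gain will reduce a full $\partial_{x_j}$ to something of lower total order, and there is no mechanism that trades fractional $x$-regularity for fractional $v$-regularity in the subelliptic estimate. Summing over all $(\alpha,\beta)$ at level $N$ therefore leaves a contribution of size $N\cdot Y_N$ on the right-hand side, and the induction does not close. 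You also omit the commutator $[\mathcal L,\partial_v^\beta]$, which is nontrivial (it is handled in the paper through the analyticity bound $\abs{\partial_{v_1}^k\mu^{1/2}}\le L^{k+1}k!\,\mu^{1/4}$ and a Leibniz expansion of the trilinear form).

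The paper breaks this circularity by \emph{not} doing a joint induction. It first runs the induction on pure $x$-derivatives (Section~3), where $[\partial_x^\alpha,v\cdot\partial_x]=0$, so no transport commutator arises; the only subtle point is absorbing the time-weight derivative, and this is exactly where the $\comi{D_x}^{s/(1+2s)}$ gain is spent via the interpolation \eqref{keyes} (note that this term is carried explicitly in the induction hypothesis \eqref{dayu3+}, whereas your $Y_N$ omits it). Only afterwards does the paper run a separate induction on pure $v$-derivatives (Section~4); there the commutator $m\,\partial_{x_1}\partial_{v_1}^{m-1}f$ is split by Young's inequality in Fourier into $\norm{\partial_{x_1}^m f}$, already controlled by the completed $x$-induction, plus $\norm{\partial_{v_1}^m f}$, which is absorbed like the time-weight term. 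Mixed derivatives are never treated directly: the final step is the elementary Cauchy--Schwarz bound $\norm{\partial_x^\alpha\partial_v^\beta f}_{L^2}\le\norm{\partial_x^{2\alpha}f}_{L^2}^{1/2}\norm{\partial_v^{2\beta}f}_{L^2}^{1/2}$. So the isotropic Gevrey conclusion does \emph{not} force a joint induction; rather, the paper obtains it from two decoupled one-variable inductions.
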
 
   
      \begin{remark}
    Theorem \ref{thex} guarantees the existence of solutions    satisfying  the assumption listed in Theorem \ref{maith}.   
   \end{remark}

    \begin{remark} 
   Theorem \ref{maith} provides  an explicit dependence of the Gevrey semi-norms  on the short time $0<t\leq 1.$   If the solutions admit additionally some kind of  decay for long  time     then the dependence on the large time  is also variable. Precisely, replacing the estimate  \eqref{smacon}  by the following  
   \begin{multline*}
 	\sup_{t\geq 0}\sum_{\abs\alpha+\abs\beta\leq 2}  \comi t^{\frac{1+2s}{2s}\inner{\abs\alpha+\abs\beta}}\norm{\partial_{x}^\alpha\partial_v^\beta f(t)}_{L^2(\mathbb R^6)}\\
 	+\sum_{\abs\alpha+\abs\beta\leq 2}\inner{\int_0^{+\infty} \comi t^{\frac{1+2s}{s} (\abs\alpha+\abs\beta)} \norm{(a^{1/2})^w\partial_{x}^\alpha\partial_v^\beta f(t)}_{L^2(\mathbb R^6)}^2dt}^{1/2} \leq 
 \epsilon_0,
 	\end{multline*}
 	we  can obtain that  
 	 \begin{eqnarray*}
  	\sup_{t>0}\big\|e^{ c_0t^{ \frac{1+2s}{2s}}\inner{- \Delta_{x,v}}^{\frac{s}{1+2s}}}f(t)\big\|_{ L^2(\mathbb R^6)}<+\infty
  \end{eqnarray*}
  for some constant  $c_0>0,$ provided the $\epsilon_0$ above is small enough.  The argument is quite similar as  that for proving  Theorem  \ref{maith} with slight modification, so we leave it to the interested readers.  Note the above estimate  means  we have polynomial decay to the equilibrium.  And the existence of solutions with strong exponential decay is obtained by Gressman-Strain  \cite{MR2784329}. 
     \end{remark}

     \begin{remark} The Gevrey index $(1+2s)/2s$ is just the same as that obtained by \cite{LERNER2015459} for the Kac equation, the one-dimensional model of Boltzmann equation.  This index is deduced from the sharp subelliptic estimate in spatial variable.    But we don't know wether or not the Gevrey index is optimal.  In fact consider the following generalized Kolmogorov equation,  which can be seen as a  simplified model of \eqref{3} if ignoring at moment the nonlinear term on the right-hand side, 
     \begin{eqnarray*}
\left\{
\begin{aligned}
&	\partial_t f+v\cdot\partial_x f+  (-\Delta_v )^{s}   f=0,\\
&f|_{t=0}=f_0.
	\end{aligned}
\right.
\end{eqnarray*}
A simple application of Fourier analysis shows the solution $f$ to the generalized Kolmogorov equation   has an explicit representation and satisfies 
\begin{eqnarray*}
	e^{c\inner{t(-\Delta_v)^{s}+t^{2s+1}(-\Delta_x)^{s}}} f \in L^2(\mathbb R^6)
\end{eqnarray*}
for some constant $c>0.$ This yields  $f\in G^{1/2s};$  we refer to \cite{MR2523694} for more detailed analysis on the model equation.  So it remains   interesting to verify  wether or not we can achieve the Gevrey index $1/2s,$ which seems to be optimal,  in space-velocity or  only velocity variable. 
     \end{remark}
     
     \begin{remark}
       Here we consider the Gevrey class regularization of solutions with mild regularity.    It is natural to ask what is the minimal regularity  required to boot the regularization procedure.    And  it is more interesting to ask the regularization effect of weak solutions  satisfying  only some kind of physical conditions such as finite mass, energy and  entropy.   
     \end{remark}
     

    \medskip
\noindent {\bf Notations}.  If no confusion occurs we will use $L^2$ to stand for the function space $L^2(\mathbb R_{x,v}^6)$, and  use $\norm{\cdot}_{L^2}$ and $\inner{\cdot,\ \cdot}_{L^2}$ to denote the norm and inner product of  $L^2=L^2(\mathbb R_{x,v}^6).$  We will also use the notations $\norm{\cdot}_{L^2(\mathbb R_v^3)}$ and  $\inner{\cdot,\ \cdot}_{L^2(\mathbb R_v^3)}$  when the variables are specified.   Similarly for $H^k$ and $H^k_\ell.$

Let $\xi$ and $\eta$ be the dual variables of $x$ and $v$ respectively. We denote by $\hat u(\xi) $  the (partial) Fourier transform in $x$ variable and denote by $q(D_x)$ a Fourier multiplier in $x$ variable with symbol $q(\xi)$,  that is, 
\begin{eqnarray*}
\widehat{q(D_x) u}(\xi)=q(\xi)\hat u(\xi). 
\end{eqnarray*}
Similarly we can define $q(D_v)$, a Fourier multiplier in $v$ variable with symbol $q(\eta).$ In particular
let $\comi{D_x}^\tau$ be the Fourier multiplier with symbol $\comi{\xi}^\tau$, recalling $\comi\xi=(1+\abs\xi^2)^{1/2}.$   Similarly we can define   $\comi{D_v}^\tau.$
Given a function $p(x,v;\xi,\eta)$ we denote by $p^w$ and $p^{\rm Wick}$ the Weyl and Wick quantizations respectively  with symbol $(v,\eta)\rightarrow p(x,v; \xi,\eta),$ considering $x$ and $\xi$ as parameters. The precise definition of Weyl and Wick quantizations is given in Appendix \ref{secapp}.   

Given two operators $T_1$ and $T_2$ we denote by $[T_1, T_2]$ the commutator between $T_1$ and $T_2$, that is,
\begin{eqnarray*}
	[T_1,  T_2]=T_1T_2-T_2T_1.
\end{eqnarray*}
  We say $T_1$ commutes with $T_2$ if  $[T_1,  T_2]=0.$

\section{Subelliptic estimate for Cauchy problem}
Let $P$ be the linearized Boltzmann operator given in \eqref{linbol}.   In this part we will derive a subelliptic estimate for the linear Cauchy problem of  Boltzmann operator $P.$ To do so we need the symbolic calculus developed in \cite{AHL}.

\subsection{Some facts on symbolic calculus}\label{subsec21}

Here we recall without proof  some  facts on the symbolic calculus obtained in  \cite{AHL} for the  collision  operator $\mathcal L$ defined in \eqref{linearbol}.  To do so we   introduce some notations for the phase space analysis and list in  Appendix \ref{secapp}  the basic properties of the quantization of symbols, and  we refer  to \cite{MR2599384}  for  the comprehensive discussion.

 Let $\eta\in\mathbb R^3$ be the dual variable of velocity $v$, and throughout the paper we let $\tilde a$ be   defined by  
 \begin{equation}\label{defofa}
  \tilde{a}(v,\eta) =\comi{v}^{\gamma} (1+ |\eta|^{2} + |v\wedge \eta|^{2} +\abs{v}^{2} )^s, \quad (v,\eta) \in \mathbb R^6,
  \end{equation}
where $\gamma$ and $s$  are the numbers given respectively  in \eqref{kern} and \eqref{angu}, and $v\wedge \eta$ stands for the cross product of two vectors $v$ and $\eta$.      Direct computation shows $\tilde a$ is an admissible weight for the flat metric $\abs{dv}^2+\abs{d\eta}^2$ (cf. \cite{AHL} for the  verification in detail),  and we can consider the Weyl quantization $p^{w}$ and Wick quantization $p^{\rm Wick}$ of a symbol $p$ lying in the class $S(\tilde a,  \abs{dv}^2+\abs{d\eta}^2);$  see   Appendix \ref{secapp}  for the definition of admissible weights,    symbol class  and its quantization.

 \begin{proposition}[Proposition 1.4 and Lemma 4.2 in \cite{AHL}] \label{estaa} Assume that the cross-section satisfies \eqref{kern} and \eqref{angu}  with $0 < s < 1$  and $\gamma >-3.$   Let $\mathcal L$ be the linearized collision operator defined by \eqref{linearbol}.   Then  we can write $$\mathcal L = -a^w - \mathcal R,$$ 
 where  $a^w$ stands for the Weyl quantization of  the symbol $a,$  with the properties listed below fulfilled. 
\begin{enumerate}[align=right, leftmargin=*,  label=(\roman*)]
\item
We have
$  a, \tilde a \in S( \tilde{a} , \abs{dv}^2+\abs{d\eta}^2)$, 
and moreover there exists a  positive constant $C\geq 1$ such that
 $C^{-1} \tilde{a}(v,\eta) \leq a(v, \eta) \leq  C \tilde{a}(v,\eta)$ for all $(v,\eta)\in\mathbb R^6$.
  
\item For any $\eps > 0$ there exists a constant $C_\eps$ such that
$$
  \norm{\mathcal R f} \leq \eps\norm{a^w f } + C_\eps \norm{\comi{v}^{\gamma+ 2s} f}.
$$
\item  The operators $ a^w$ and $\big(a^{1/2}\big)^w$  are invertible    in $L^2$ and  their inverses can be written as
\[
    \inner{a^w}^{-1} =H_1 \inner{a^{-1}}^w=\inner{a^{-1}}^wH_2
\]
and 
\[
   \big[\big(a^{1/2}\big)^w\big]^{-1}= G_1
   \big(a^{-1/2}\big)^w= \big(a^{-1/2}\big)^wG_2,
\]
with  $H_j, G_j$  the  bounded operators
in $L^2$.
   \end{enumerate}
   \end{proposition}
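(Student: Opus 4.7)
The plan is to build the decomposition $\mathcal L = -a^w - \mathcal R$ starting from an explicit integral representation of $\mathcal L$, then identify $a$ as the Weyl symbol of the ``diffusion'' part, verify the symbol class membership and the two-sided bound $a \approx \tilde a$, and finally deduce invertibility by symbolic calculus.

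First, I would rewrite $\mathcal L f$ using the standard pre-post-collisional change of variables together with the cancellation lemma of Alexandre--Desvillettes--Villani--Wennberg. Writing $\mathcal L = \mathcal L_1 + \mathcal L_2$ with
\begin{equation*}
\mathcal L_1 f = \mu^{-1/2}Q(\mu, \sqrt\mu f), \qquad \mathcal L_2 f = \mu^{-1/2}Q(\sqrt\mu f, \mu),
\end{equation*}
the first piece can be analysed via a Bobylev-type formula in the velocity Fourier variable $\eta$, which turns it into an integral over $\mathbb S^2$ involving oscillations at the points $\eta^\pm = (\eta \pm |\eta|\sigma)/2$. Using the singularity $b(\cos\theta) \sin\theta \approx \theta^{-1-2s}$, the contribution near $\theta = 0$ produces a multiplier behaving like $(1+|\eta|^2 + |v\wedge \eta|^2 + |v|^2)^{s}$, weighted by $\langle v\rangle^{\gamma}$ coming from the kinetic factor $|v-v_*|^\gamma$ after integration against $\sqrt{\mu}\sqrt{\mu_*}$. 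The cross-product term $|v\wedge\eta|^2$ arises because only the component of $\eta$ perpendicular to $v$ is moved by the collision geometry. The operator $\mathcal L_2$ is more regular (its kernel is Schwartz in $v_*$ after pairing with $\sqrt\mu$), so it contributes only to the remainder.

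Next, I would \emph{define} $a(v,\eta)$ to be the Weyl symbol of this principal part and verify $a\in S(\tilde a, |dv|^2+|d\eta|^2)$ by differentiating under the integral sign in $(v,\eta)$ and using the elementary bound $\langle v\rangle^\gamma \theta^2(1+|\eta|^2 + |v\wedge\eta|^2 + |v|^2)^s$ for the integrand near $\theta=0$, which is integrable against $\theta^{-1-2s}d\theta$. The lower bound $a \geq C^{-1}\tilde a$ is the delicate point and is obtained by restricting the $\sigma$-integral to $\theta \approx |\eta|^{-1}$ where the oscillation cancellation is minimal; the upper bound $a \leq C\tilde a$ follows from the crude estimate on the integrand. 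For the remainder (ii), one collects all terms in which either the principal singular integral is truncated away from $\theta=0$ or in which $\mathcal L_2$ contributes: by symbolic calculus all such pieces have Weyl symbol in $S(\tilde a^{1-\delta}\langle v\rangle^{\gamma+2s\delta},\cdot)$ for some $\delta\in(0,1]$, and a standard interpolation (writing $\tilde a^{1-\delta} = \tilde a \cdot \tilde a^{-\delta}$ and using sharp G{\aa}rding) yields the bound $\|\mathcal R f\|\leq \eps \|a^w f\| + C_\eps \|\langle v\rangle^{\gamma+2s} f\|$.

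For (iii), since $a\approx \tilde a$ with $\tilde a$ an admissible weight for the flat metric and $a>0$, one has $a^{-1}\in S(\tilde a^{-1}, |dv|^2+|d\eta|^2)$ and $a^{\pm 1/2}\in S(\tilde a^{\pm 1/2},\cdot)$. The composition formula then gives $a^w \#(a^{-1})^w = 1 + r^w$ with $r\in S(\tilde a^{-2},\cdot)$, so $r^w$ is bounded with small norm after multiplying $(a^{-1})^w$ by a suitable bounded correction (or by Neumann series on the full operator $1+r^w$, whose principal symbol is close to $1$ outside a compact set and which one inverts there by ellipticity); this produces the operators $H_1, H_2$. The same argument applied to $a^{1/2}$ yields $G_1, G_2$. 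The main obstacle in the whole scheme is the sharp identification of the symbol: one must show that the angular integral over $\mathbb S^2$ reproduces \emph{all three} subelliptic directions $\eta$, $v\wedge\eta$ and $v$ with the correct weight $\langle v\rangle^\gamma$, and not simply $(-\Delta_v)^s$ plus an error, because otherwise the lower bound $a\geq C^{-1}\tilde a$ fails on the characteristic set where $\eta$ is parallel to $v$.
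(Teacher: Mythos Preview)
The paper does not prove this proposition at all; it is quoted verbatim from \cite{AHL} (Proposition~1.4 and Lemma~4.2 there), as the opening line of Subsection~\ref{subsec21} makes explicit. So there is no ``paper's own proof'' to compare your sketch against, and the Remark following the statement even records that the symbol the paper calls $a$ is actually the shifted symbol $a_K$ from \cite{AHL} with $K$ large.

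Your outline of (i)--(ii) is in the right spirit (Bobylev-type representation, identification of the three directions $\eta$, $v\wedge\eta$, $v$, and interpolation for the remainder), though the lower-bound step ``restrict to $\theta\approx|\eta|^{-1}$'' is where all the work lies and would need a genuine argument rather than a sentence.

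There is, however, a concrete gap in your treatment of (iii). You claim that $a^w(a^{-1})^w=1+r^w$ with $r\in S(\tilde a^{-2},|dv|^2+|d\eta|^2)$. This is not correct for the \emph{flat} metric: membership $a\in S(\tilde a,|dv|^2+|d\eta|^2)$ means only $|\partial^\alpha a|\le C_\alpha\tilde a$ with no gain on differentiation, so the second-order remainder in the Moyal expansion lands merely in $S(1,|dv|^2+|d\eta|^2)$, giving a bounded $r^w$ with no smallness and no compactness for free. A Neumann series on $1+r^w$ therefore does not close as written. The actual device---hinted at by the paper's Remark---is that one works with $a_K=a+K\langle v\rangle^{\gamma+2s}$ for $K$ large: positivity of $a_K^w$ (via G{\aa}rding/coercivity, since $a_K\gtrsim K$) gives invertibility of $a_K^w$ directly as a bounded-below self-adjoint operator, and \emph{then} one identifies $(a_K^w)^{-1}$ with $(a_K^{-1})^w$ up to bounded factors by composing and using that the error has symbol in $S(1,\cdot)$ while the operator itself is already known to be invertible. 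Your argument reverses this order and tries to produce invertibility from the symbolic parametrix alone, which the flat calculus does not deliver.
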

   
        \begin{remark}
      To simplify the notation  we write $a$ here instead of $a_K$  defined in  \cite[Proposition 1.4]{AHL} with $K$ a large positive number. Accordingly $\mathcal R=\mathcal K-K\comi v^{2s+\gamma}$ with $\mathcal K$ given in  \cite[Proposition 1.4]{AHL} or defined precisely  in  
  \cite[eq. (53)]{AHL}.    
     \end{remark}
       
  The symbolic calculus above enables us to get the  exact diffusion property of the collision operator and provides the following  representation of the triple norm defined by \eqref{trnor} in terms of $(a^{1/2})^w$.

 \begin{lemma}\label{lemdiff}
Assume that the cross-section satisfies \eqref{kern} and \eqref{angu}  with $0 < s < 1$  and $\gamma \geq 0.$  Then  for all $l \in \mathbb R$
with $l \leq \gamma/2+s$ and  for any  suitable function $u$ we have 
\begin{eqnarray*}
 \normm{u }^2  
 \approx -\inner{\mathcal L u, u}_{L^2(\mathbb R_v^3)} + \norm{\comi v^{l}f}_{L^2(\mathbb R_v^3)}^2  \approx  \norm{(a^{1/2})^w u}_{L^2(\mathbb R_v^3)}^2,
\end{eqnarray*}
where in the first  equivalence the constant  depends only  on $l $.
\end{lemma}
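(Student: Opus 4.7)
My plan is to chain the two claimed equivalences through the middle quantity $-\inner{\mathcal L u,u}_{L^2(\mathbb R_v^3)} + \|\langle v\rangle^{l} u\|_{L^2(\mathbb R_v^3)}^2$. The first equivalence
\[
\normm{u}^2 \approx -\inner{\mathcal L u,u}_{L^2(\mathbb R_v^3)} + \|\langle v\rangle^{l} u\|_{L^2(\mathbb R_v^3)}^2
\]
is the by now classical coercivity estimate for the linearized non-cutoff collision operator; the triple norm \eqref{trnor} was designed precisely so that, after symmetrizing the collision kernel \eqref{collis} and applying a Bobylev-type identity, one obtains this bound with implicit constant depending only on $l$ provided $l\le \gamma/2+s$ and $\gamma\ge 0$. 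I would simply invoke the existing proof from the literature (Alexandre--Morimoto--Ukai--Xu--Yang and Gressman--Strain).

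For the second equivalence, I would feed in the decomposition $\mathcal L=-a^w-\mathcal R$ of Proposition~\ref{estaa}. Since $a^{1/2}$ is real, $(a^{1/2})^w$ is self-adjoint on $L^2(\mathbb R_v^3)$, and by Weyl composition with the flat metric $g=\abs{dv}^2+\abs{d\eta}^2$,
\[
\|(a^{1/2})^w u\|_{L^2}^2 = \inner{(a^{1/2}\#a^{1/2})^w u,u}_{L^2} = \inner{a^wu,u}_{L^2} + \inner{r^w u,u}_{L^2},
\]
where $r := a^{1/2}\#a^{1/2}-a$. Because the Poisson bracket $\{a^{1/2},a^{1/2}\}$ vanishes identically, $r$ sits in a symbol class strictly below $S(\tilde a, g)$. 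Combined with $-\inner{\mathcal L u,u}_{L^2}=\inner{a^wu,u}_{L^2}+\inner{\mathcal R u,u}_{L^2}$ this yields
\[
-\inner{\mathcal L u,u}_{L^2} = \|(a^{1/2})^wu\|_{L^2}^2 - \inner{r^wu,u}_{L^2} + \inner{\mathcal Ru,u}_{L^2},
\]
so the second equivalence will follow as soon as I establish the absorbable bound
\[
\abs{\inner{r^wu,u}_{L^2}}+\abs{\inner{\mathcal Ru,u}_{L^2}} \le \tfrac12 \|(a^{1/2})^w u\|_{L^2}^2 + C\|\langle v\rangle^{l} u\|_{L^2}^2.
\]

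For the $\mathcal R$-contribution I would use the explicit decomposition $\mathcal R = \mathcal K - K\langle v\rangle^{2s+\gamma}$ recorded in the remark after Proposition~\ref{estaa}: the $-K\langle v\rangle^{2s+\gamma}$ piece is sign-definite and of the correct weight, while the $\mathcal K$-piece is handled via the duality identity $\inner{\mathcal Ku,u} = \inner{[(a^{1/2})^w]^{-1}\mathcal Ku,(a^{1/2})^wu}$, Cauchy--Schwarz, Proposition~\ref{estaa}(ii) with small $\epsilon$, and Proposition~\ref{estaa}(iii) to dispose of $[(a^{1/2})^w]^{-1}$. The $r^w$-term is treated analogously by factoring through $(a^{1/2})^w$. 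The main obstacle, and the step requiring careful bookkeeping, will be confirming that the Weyl remainder $r$ genuinely yields a usable gain over $S(\tilde a,g)$ despite the underlying metric being flat (so that no gain from a Planck parameter is automatic), and verifying that the endpoint threshold $l=\gamma/2+s$ is matched exactly by the pointwise lower bound $\tilde a^{1/2}(v,\eta)\gtrsim \langle v\rangle^{\gamma/2+s}$, so that the absorption is tight and no endpoint is lost.
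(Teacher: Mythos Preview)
Your handling of the first equivalence coincides with the paper's: both simply defer to the coercivity estimate of Alexandre--Morimoto--Ukai--Xu--Yang (the paper cites \cite[Theorem~1.1]{AMUXY1}). For the second equivalence the paper is equally brief, invoking \cite[Lemmas~4.6 and~4.8]{AHL} without further argument; you instead try to reconstruct that step from the decomposition $\mathcal L=-a^w-\mathcal R$ of Proposition~\ref{estaa}. So the routes diverge only in that you attempt to reprove what the paper outsources.

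Your skeleton---writing $\|(a^{1/2})^wu\|_{L^2}^2=\inner{a^wu,u}_{L^2}+\inner{r^wu,u}_{L^2}$ and matching it against $-\inner{\mathcal Lu,u}_{L^2}=\inner{a^wu,u}_{L^2}+\inner{\mathcal Ru,u}_{L^2}$---is correct, and the treatment of $\mathcal R$ via Proposition~\ref{estaa}(ii),(iii) is sound. The genuine gap is precisely the point you yourself flag: the claim that $r=a^{1/2}\#a^{1/2}-a$ ``sits in a symbol class strictly below $S(\tilde a,g)$'' is \emph{not} delivered by the flat calculus $g=\abs{dv}^2+\abs{d\eta}^2$. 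The Planck function of $g$ is identically $1$, so the abstract composition theorem gives only $r\in S(\tilde a,g)$, the same class as $a$; the vanishing of $\{a^{1/2},a^{1/2}\}$ kills the first-order term but does not improve the class of the second-order remainder. Consequently ``factoring $r^w$ through $(a^{1/2})^w$'' yields an operator with symbol in $S(\tilde a^{1/2},g)$, which is unbounded, and your absorption inequality does not close as written.

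This is not unfixable, but it requires input beyond what Proposition~\ref{estaa} records. In \cite{AHL} the resolution is effectively to exploit the explicit polynomial structure of $\tilde a$ (equivalently, to work in a metric adapted to $\tilde a$ in which derivatives of $a^{1/2}$ genuinely gain a negative power of the weight); this finer information is what makes the invertibility in Proposition~\ref{estaa}(iii) possible and is the substance of the cited Lemmas~4.6 and~4.8. Without importing that structure, the flat-metric bookkeeping alone will not produce the needed gain on $r^w$.
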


\begin{proof}
The first  equivalence is obtained by \cite[Theorem 1.1]{AMUXY1} 	and the second one follows from  \cite[Lemma 4.6 and Lemma 4.8]{AHL}.
\end{proof}

As a result of Lemma \ref{lemdiff}  the following upper bound for trilinear term (see   \cite[Theorem 1.2]{AlexMorUkaiXuYang})  
\begin{equation*}
|(\Gamma(g,h),\psi)_{L^{2}(\mathbb{R}^{3}_{v})} |\leq  C \norm{g}_{L^{2}(\mathbb{R}^{3}_{v})}\normm h\, \normm\psi
\end{equation*}
can be re-written as  
\begin{equation}\label{orup}
|(\Gamma(g,h),\psi)_{L^{2}(\mathbb{R}^{3}_{v})} |\leq  C \norm{g}_{L^{2}(\mathbb{R}^{3}_{v})}\norm{ (a^{1/2})^w h}_{L^{2}(\mathbb{R}^{3}_{v})}\norm{(a^{1/2})^w \psi}_{L^{2}(\mathbb{R}^{3}_{v})}.
\end{equation}
This gives for suitable function $g, h$ and $\psi,$
\begin{equation}\label{tripleest}
|(\Gamma(g,h),\psi)_{L^{2}} |\leq  C \sum_{\abs{\beta}\leq 2}\norm{\partial_x^\beta g}_{L^{2}}\norm{ (a^{1/2})^w h}_{L^{2}}\norm{(a^{1/2})^w \psi}_{L^{2}},
\end{equation}
\begin{equation}\label{tripleest+}
|(\Gamma(g,h),\psi)_{L^{2}} |\leq  C \norm{g}_{L^{2}}\Big(\sum_{\abs{\beta}\leq 2} \norm{ (a^{1/2})^w \partial_x^\beta  h}_{L^{2}}\Big)\norm{(a^{1/2})^w \psi}_{L^{2}}
\end{equation}
and 
\begin{equation}\label{tripleest++++}
|(\Gamma(g,h),\psi)_{L^{2}} |\leq  C \norm{g}_{L^{2}}\norm{ (a^{1/2})^w   h}_{L^{2}}\Big(\sum_{\abs{\beta}\leq 2} \norm{\partial_x^\beta (a^{1/2})^w \psi}_{L^{2}}\Big),
\end{equation}
 recalling $L^2=L^2(\mathbb R^6_{x,v}).$   Moreover we have
   the following inequality: for any $u, w\in H^1(\mathbb R_x^3),$
\begin{equation}\label{sobeq}
	\norm{uw}_{L^2(\mathbb R_x^3)}\leq C \Big(\sum_{\abs\beta\leq 1}\norm{\partial_x^\beta u}_{L^2(\mathbb R_x^3)}\Big) \sum_{\abs\beta\leq 1}\norm{\partial_x^\beta w}_{L^2(\mathbb R_x^3)}.
\end{equation}
To see this observe \begin{eqnarray*}
 \norm{uw}_{L^2(\mathbb R_x^3)} \leq \norm{u}_{L^4(\mathbb R_x^3)}\norm{w}_{L^4(\mathbb R_x^3)} 	\end{eqnarray*}
	and  Gagliardo-Nirenberg-Sobolev inequality gives
	\begin{eqnarray*}
	\norm{u}_{L^4(\mathbb R_x^3)}\leq \norm{u}_{L^2(\mathbb R_x^3)}^{1/4}\norm{u}_{L^6(\mathbb R_x^3)}^{3/4}\leq C\norm{u}_{L^2(\mathbb R_x^3)}^{1/4}\norm{\partial_xu}_{L^2(\mathbb R_x^3)}^{3/4}\leq C\sum_{\abs\beta\leq 1}\norm{\partial_x^\beta u}_{L^2(\mathbb R_x^3)}.
	\end{eqnarray*}
	Similar for $\norm{w}_{L^4(\mathbb R_x^3)}.$  Then \eqref{sobeq} follows.  Combining \eqref{sobeq} and \eqref{orup} implies 
	\begin{equation}
	\label{newesti}	
	|(\Gamma(g,h),\psi)_{L^{2}} |\leq  C \Big(\sum_{\abs{\beta}\leq 1} \norm{\partial_x^\beta g}_{L^{2}}\Big)\norm{ (a^{1/2})^w   h}_{L^{2}} \Big(\sum_{\abs{\beta}\leq 1} \norm{\partial_x^\beta ( a^{1/2})^w \psi}_{L^{2}}\Big).
	\end{equation}

\subsection{Subelliptic estimate for the Cauchy problem of linear Boltzmann equation}
The non isotropic   triple norm  given in the previous subsection  is not enough for  the  Gevrey regularity,  since  we can't  get any regularity in spatial variable $x.$  When the time varies in the whole space,   the  sharp regularity in all variables  is obtained by \cite{AHL} using  the Fourier transform in time-space variable.    Here we will derive a  subelliptic estimate involving the initial data,  following the multiplier method  in \cite{AHL}.

 \begin{proposition}[Elliptic estimate in velocity]\label{prpellp}
 	Assume that the cross-section satisfies \eqref{kern} and \eqref{angu}  with $0 < s < 1$  and $\gamma \geq 0.$    Then there exists a  constant  $C\geq 1$ such that for any given $r\geq 1$  and   any function $u$ satisfying 
 that
 $Pu\in L^2 \inner{[0, 1]\times\mathbb  R^6}$ and that
 \begin{equation*}
 \qquad 	t^{r-{1\over2}}u(t)\in L^\infty \inner{[0, 1];  L^2}\  \textrm{and}\  \  t^{r}(a^{1/2})^w u(t) \in  L^2 \inner{[0, 1]\times\mathbb  R^6},
 \end{equation*}    
  we have, for any $0<t\leq 1,$
\begin{equation*}\label{subest+++}
\begin{aligned}
	&    t^{2r}\norm{u(t)}_{L^2}^2  + \int_0^1 t^{2r}\norm{(a^{1/2})^wu(t)}_{L^2}^2dt \\
	 \leq & C\int_0^1  t^{2r} \big|\inner{ P   u,\   u}_{L^2}\big| dt +r\,C    \int_0^1 t^{2r-1}\norm{u}_{L^2}^2dt.
	   \end{aligned}
\end{equation*}

 \end{proposition}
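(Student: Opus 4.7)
The proposition is a weighted $L^2$ energy estimate on $[0,1]$, so the plan is to test the equation against $t^{2r}u$ and integrate over $[0,T]\times\mathbb R^6$ for $T\le 1$, using the antisymmetry of the transport part and the coercivity of $-\mathcal L$ provided by the symbolic calculus of Subsection \ref{subsec21}. Recall $P=\partial_t+v\cdot\partial_x-\mathcal L$, so
\begin{equation*}
\bigl(Pu,u\bigr)_{L^2}=\tfrac12\tfrac{d}{dt}\|u\|_{L^2}^2+(v\cdot\partial_x u,u)_{L^2}-(\mathcal L u,u)_{L^2}.
\end{equation*}

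First I would dispose of the transport term: since $v$ does not depend on $x$, integration by parts in $x$ gives $(v\cdot\partial_x u,u)_{L^2}=\tfrac12\int v\cdot\partial_x(u^2)\,dx\,dv=0$. Next, the coercivity of $-\mathcal L$ comes from Lemma \ref{lemdiff}: since $\gamma\ge 0$ and $s>0$, we may take $l=0$ in that lemma and obtain, after integrating in $x$,
\begin{equation*}
-(\mathcal L u,u)_{L^2}+\|u\|_{L^2}^2\ \approx\ \|(a^{1/2})^w u\|_{L^2}^2,
\end{equation*}
hence $-(\mathcal L u,u)_{L^2}\ge c\,\|(a^{1/2})^w u\|_{L^2}^2-C\|u\|_{L^2}^2$ for some $c>0$. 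Combining the two computations,
\begin{equation*}
(Pu,u)_{L^2}\ \ge\ \tfrac12\tfrac{d}{dt}\|u\|_{L^2}^2+c\|(a^{1/2})^w u\|_{L^2}^2-C\|u\|_{L^2}^2.
\end{equation*}

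Now I would multiply by $t^{2r}$ and integrate from $0$ to $T\in(0,1]$. Integration by parts in time yields
\begin{equation*}
\int_0^T t^{2r}\tfrac{d}{dt}\|u\|_{L^2}^2\,dt\ =\ T^{2r}\|u(T)\|_{L^2}^2-2r\int_0^T t^{2r-1}\|u\|_{L^2}^2\,dt,
\end{equation*}
where the boundary term at $0$ vanishes because the hypothesis $t^{r-1/2}u\in L^\infty([0,1];L^2)$ gives $t^{2r}\|u(t)\|_{L^2}^2\le C t\to 0$. Rearranging and noting that $\int_0^T t^{2r}\|u\|_{L^2}^2\,dt\le \int_0^T t^{2r-1}\|u\|_{L^2}^2\,dt$ for $T\le 1$, so the $C\|u\|_{L^2}^2$ contribution is absorbed into the $rC$ term (using $r\ge 1$),
\begin{equation*}
T^{2r}\|u(T)\|_{L^2}^2+\int_0^T t^{2r}\|(a^{1/2})^w u\|_{L^2}^2\,dt\ \le\ C\int_0^T t^{2r}|(Pu,u)_{L^2}|\,dt+rC\int_0^T t^{2r-1}\|u\|_{L^2}^2\,dt.
\end{equation*}
Specialising this to arbitrary $0<t\le 1$ on the left (for the pointwise term) and to $T=1$ (for the integral term, since the integrands are nonnegative) yields the claimed estimate.

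The only non-routine ingredient is the coercivity of $-\mathcal L$ up to a zero-order remainder, and this is supplied directly by Lemma \ref{lemdiff}; the rest is bookkeeping with the temporal weight $t^{2r}$. No essential obstacle is expected here, since the stronger subelliptic gain in $x$ (which is the real content of the forthcoming work) is not required at this stage: the present proposition only asserts the elliptic gain in $v$ together with the initial-time decay built into the weight $t^{2r}$.
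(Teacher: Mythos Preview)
Your proposal is correct and follows essentially the same approach as the paper: derive the differential inequality $\tfrac12\tfrac{d}{dt}\|u\|_{L^2}^2+c\|(a^{1/2})^w u\|_{L^2}^2\le|(Pu,u)_{L^2}|+\|u\|_{L^2}^2$ from Lemma \ref{lemdiff}, multiply by $t^{2r}$, integrate, and use the hypothesis $t^{r-1/2}u\in L^\infty([0,1];L^2)$ to eliminate the boundary term at $t=0$. The paper's write-up differs only cosmetically (it applies the product rule to $t^{2r}\|u\|_{L^2}^2$ before integrating, rather than integrating by parts afterward), and it does not spell out the vanishing of the transport term as you do.
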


\begin{proof}
	By density we may assume $u$ is rapidly decreasing  on $\mathbb R^6.$   Using the second equivalence in Lemma \ref{lemdiff} as well as the fact that 
\begin{eqnarray*}
	-\inner{\mathcal L u, u}_{L^2}={\rm  Re} \inner{Pu, u}_{L^2}-\frac{1}{2}\frac{d}{dt}\norm{u}_{L^2}^2,
\end{eqnarray*}
we conclude   a small constant $0<c_1<1$ exists such that   
\begin{equation}\label{diffes}
	\frac{1}{2} \frac{d}{dt}\norm{u}_{L^2}^2+ c_1 \norm{(a^{1/2})^w u}_{L^2}^2 \leq \abs{(Pu,u)_{L^2}}+\left\| u\right\|_{L^2}^2.  
\end{equation}
Thus for any  $0<t\leq 1,$
\begin{eqnarray*}
	\begin{aligned}
  &\frac{1}{2}\frac{d}{dt} \inner{t^{2r}\norm{u}_{L^2}^2} +   c_1 t^{2r}\norm{(a^{1/2})^wu}_{L^2}^2  \\
	 \leq  &  t^{2r} \big|\inner{ P   u,\   u}_{L^2}\big|+ t^{2r}\norm{u}_{L^2}^2+rt^{2r-1}\norm{ u}_{L^2}^2.
\end{aligned}
\end{eqnarray*}
Integrating both side over the interval $[0,t]$ with any $0<t\leq 1$ and observing  $t^{r-1/2}u\in L^\infty\inner{[0, 1];\ L^2}$ which implies 
\begin{eqnarray*}
 \lim_{t\rightarrow 0}t^{2r}  \norm{ u }_{L^2} ^2 =0,
\end{eqnarray*} 
we obtain the  estimate as desired   for $v$ variable.  The proof is completed. 
\end{proof}

\begin{proposition}[Subelliptic estimate in space]
\label{prosub}
Assume that the cross-section satisfies \eqref{kern} and \eqref{angu}  with $0 < s < 1$  and $\gamma \geq 0.$   Then we can find a   constant $C\geq 1$   and a bounded operator   $\mathcal A $  in $L^2$ with the following properties 
 \begin{equation}
	\label{bd}
	\left\{
	\begin{aligned}
	&\norm{\mathcal A  u}_{L^2}\leq C_{s,\gamma} \norm{u}_{L^2},\\
	&\big[\mathcal A , \ q(D_x) \big]=0,\\
	& \norm{\big[\mathcal A , \  (a^{1/2})^w\big]u}_{L^2}\leq C_{s,\gamma} \norm{(a^{1/2})^w u}_{L^2},
	\end{aligned}
	\right.
\end{equation}
fulfilled for     some  constant $C_{s,\gamma}$   depending only on $s$ and $\gamma$ and  for  any  Fourier multiplier  $q(D_x)$ in  only $x$ variable,     such that   
  for any given $r\geq 1$  the following two estimates hold.
 \begin{enumerate}[fullwidth, itemindent=0em, label=(\roman*)]
\item For any function $u$ satisfying 
 that
 $Pu\in L^2 \inner{[0, 1]\times\mathbb  R^6}$ and that
 \begin{equation*}
  	t^{r-{1\over2}}u\in L^\infty \inner{[0, 1];  L^2}\ \textrm{and}\ \   t^{r}\comi{D_x}^{\frac{s}{1+2s}}u, \, t^{r}(a^{1/2})^w u \in  L^2 \inner{[0, 1]\times\mathbb  R^6},
 \end{equation*}    
  we have, for any $0<t\leq 1,$
\begin{eqnarray}\label{subest}
\begin{aligned}
	&    t^{2r}\norm{u(t)}_{L^2}^2 +  \int_0^1 t^{2r}\norm{\comi{D_x}^{\frac{s}{1+2s}} u(t)}_{L^2}^2dt+ \int_0^1 t^{2r}\norm{(a^{1/2})^wu(t)}_{L^2}^2dt \\
	& \leq  C\int_0^1  t^{2r} \big|\inner{ P   u,\   u}_{L^2}\big| dt +C \int_0^1 t^{2r} \big|\inner{ P   u,\   \mathcal A  u}_{L^2}  \big| dt +r\,C    \int_0^1 t^{2r-1}\norm{u}_{L^2}^2dt.
	   \end{aligned}
\end{eqnarray}
\item  For any function $u$ satisfying 
 that
 $Pu\in L^2 \inner{[1, +\infty[\times\mathbb  R^6}$ and that
 \begin{equation*}
 \qquad  u \in L^\infty \inner{[1, +\infty[;  L^2}\ \textrm{and}\   \comi{D_x}^{\frac{s}{1+2s}}u, \   (a^{1/2})^w u \in  L^2 \inner{[1, +\infty[\times\mathbb  R^6},
 \end{equation*}    we have, for any $t\geq 1,$
\begin{equation}\label{1infty}
\begin{aligned}
	&   \norm{u(t)}_{L^2}^2 +  \int_1^{+\infty}   \norm{\comi{D_x}^{\frac{s}{1+2s}} u(t)}_{L^2}^2dt + \int_1^{+\infty}  \norm{(a^{1/2})^wu(t)}_{L^2}^2dt \\
	\leq & \ \norm{u(1)}_{L^2}^2+ C\int_1^{+\infty}    \big|\inner{ P   u, \   u}_{L^2}\big| dt +C \int_1^{+\infty}    \big|\inner{ P   u,  \  \mathcal A  u}_{L^2}  \big| dt\\
	& +C   \int_1^{+\infty} \norm{u}_{L^2}^2dt.
	 \end{aligned}
\end{equation} 
\end{enumerate}
Note the constant $C$  in \eqref{subest} is independent of $r.$

\end{proposition}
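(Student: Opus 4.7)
The strategy is to adapt the multiplier argument of Alexandre--H\'erau--Li \cite{AHL} to the Cauchy-problem setting, combining the elliptic estimate of Proposition \ref{prpellp} with a new subelliptic gain in the spatial direction produced by an auxiliary operator $\mathcal A$. Since $\mathcal A$ must commute with every $x$-Fourier multiplier, I would seek $\mathcal A$ as an operator whose symbol depends on $x$ only through $\xi = D_x$, quantized (Weyl or Wick) in the $(v,\eta)$ variables only, with $\xi$ treated as a parameter. The desired gain of order $\comi{D_x}^{s/(1+2s)}$ will be encoded by the principal symbol of $[v\cdot\partial_x,\mathcal A]$, which equals $\xi\cdot\nabla_\eta\lambda$ when $\lambda$ denotes the symbol of $\mathcal A$.

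Concretely, I would take $\mathcal A=\lambda^{\mathrm{Wick}}$, where $\lambda(\xi;v,\eta)$ is a smooth, real, uniformly bounded symbol of the schematic form
\[
\lambda(\xi;v,\eta)=\psi\!\Big(\tfrac{\xi\cdot\eta}{\omega(\xi)}\Big)\,\chi\!\Big(\tfrac{(v,\eta)}{\comi\xi^{1/(1+2s)}}\Big),
\]
with $\psi$ smooth, odd, bounded, $\psi'\geq 0$, $\chi$ an appropriate cut-off, and the weight $\omega(\xi)$ chosen so that $|\xi|^2/\omega(\xi)\approx\comi\xi^{2s/(1+2s)}$. By design $\xi\cdot\nabla_\eta\lambda\gtrsim \comi\xi^{2s/(1+2s)}$ on the support of $\chi$, while $\partial^\alpha_{v,\eta}\lambda$ gains a factor $\comi\xi^{-|\alpha|/(1+2s)}$. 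The Calder\'on--Vaillancourt theorem applied in the flat metric uniformly in $\xi$ gives the first item of \eqref{bd}; commutation with $q(D_x)$ is built in; and the symbolic calculus for $a^{1/2}\in S(\tilde a^{1/2},|dv|^2+|d\eta|^2)$ of Proposition \ref{estaa}, combined with the decay of $\partial^\alpha_{v,\eta}\lambda$, yields the commutator bound $[\mathcal A,(a^{1/2})^w]=O_{L^2\to L^2}\bigl((a^{1/2})^w\bigr)$.

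The core energy identity is produced by computing $\frac{d}{dt}(\mathcal A u,u)_{L^2}$: using \eqref{linbol}, the skew-adjointness of $v\cdot\partial_x$ and the self-adjointness of $\mathcal A$,
\[
\bigl([v\cdot\partial_x,\mathcal A]u,u\bigr)_{L^2}=-\tfrac{d}{dt}(\mathcal A u,u)_{L^2}+2\operatorname{Re}(Pu,\mathcal A u)_{L^2}+2\operatorname{Re}(\mathcal L u,\mathcal A u)_{L^2}.
\]
Multiplying by $t^{2r}$ and integrating on $[0,1]$, the boundary term at $t=0$ vanishes thanks to $t^{r-1/2}u\in L^\infty L^2$, and integration by parts in $t$ produces $2r\int_0^1 t^{2r-1}(\mathcal A u,u)\,dt$, controlled by $rC\int t^{2r-1}\|u\|_{L^2}^2dt$ via the $L^2$ boundedness of $\mathcal A$. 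A sharp G\aa rding-type inequality in the $(v,\eta)$ phase space (with $\xi$ frozen) bounds the left-hand side below by a multiple of $\|\comi{D_x}^{s/(1+2s)}u\|_{L^2}^2$ minus a controllable $\|(a^{1/2})^w u\|_{L^2}^2$. The term $\operatorname{Re}(\mathcal L u,\mathcal A u)_{L^2}$ is bounded using the decomposition $\mathcal L=-a^w-\mathcal R$ of Proposition \ref{estaa}, Lemma \ref{lemdiff}, and the commutator bound from \eqref{bd}. Adding a sufficiently large multiple of Proposition \ref{prpellp} then absorbs the $\|(a^{1/2})^w u\|_{L^2}^2$ error and yields \eqref{subest}. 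The estimate \eqref{1infty} follows from exactly the same scheme carried out on $[1,T]$, dropping the $t^{2r}$ weight and using $\|u(1)\|_{L^2}^2$ in place of the vanishing boundary contribution at $t=0$.

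The main obstacle will be the simultaneous tuning of $\lambda$: it must be uniformly bounded in $\xi$, yield a Poisson bracket with $v\cdot\xi$ of the sharp size $\comi\xi^{2s/(1+2s)}$, and give a commutator with $(a^{1/2})^w$ dominated by $(a^{1/2})^w$ itself. This forces the precise rescaling $\comi\xi^{-1/(1+2s)}$ of the $(v,\eta)$ variables and a careful verification, uniform in $\xi$, in the Weyl and Wick symbolic calculus recalled in Appendix \ref{secapp}.
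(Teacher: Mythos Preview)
Your proposal is correct and follows the same overall strategy as the paper: take $\mathcal A=\lambda^{\rm Wick}$ for a real bounded symbol in $(v,\eta)$ with parameter $\xi$, extract the $\comi{D_x}^{s/(1+2s)}$ gain from the Poisson bracket $\{\lambda,v\cdot\xi\}=\xi\cdot\nabla_\eta\lambda$ via Wick positivity, control $(\mathcal Lu,\mathcal Au)$ by $\|(a^{1/2})^wu\|^2$, add a large multiple of Proposition~\ref{prpellp}, and run the time-weighted integration on $[0,1]$ (respectively unweighted on $[1,\infty)$) exactly as you describe; the three properties in \eqref{bd} are also obtained as you indicate.

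The one place you differ from the paper is the concrete choice of $\lambda$. The paper uses the anisotropic symbol of \cite{AHL},
\[
\lambda(v,\eta)=\frac{d(v,\eta)}{\tilde a(v,\xi)^{\frac{2s}{1+2s}}}\,\chi\!\Big(\frac{\tilde a(v,\eta)}{\tilde a(v,\xi)^{\frac{1}{1+2s}}}\Big),
\]
with $d(v,\eta)=\comi v^\gamma\big(1+|v|^2+|\xi|^2+|v\wedge\xi|^2\big)^{s-1}\big(\xi\cdot\eta+(v\wedge\xi)\cdot(v\wedge\eta)\big)$ linear in $\eta$; its Poisson bracket with $v\cdot\xi$ produces the full weight $\tilde a(v,\xi)^{1/(1+2s)}$, which is then relaxed to $\comi\xi^{2s/(1+2s)}$ using $\gamma\geq0$. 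Your isotropic choice $\psi(\xi\cdot\eta/\omega(\xi))\,\chi\big((v,\eta)/\comi\xi^{1/(1+2s)}\big)$ targets $\comi\xi^{2s/(1+2s)}$ directly and would also suffice for the stated proposition, since on the complement of your cutoff one has $\tilde a(v,\eta)\gtrsim\comi\xi^{2s/(1+2s)}$ so the error is absorbed by $\|(a^{1/2})^wu\|^2$. The paper's symbol has the advantage of being the one already verified in detail in \cite{AHL} and of being adapted to the full anisotropic structure of $a$, while yours is simpler but requires its own (straightforward) verification of the pointwise lower bound $\xi\cdot\nabla_\eta\lambda\geq c\comi\xi^{2s/(1+2s)}-C\tilde a(v,\eta)$.
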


\begin{proof} 
 We adopt the idea used for  proving  \cite[Lemma 4.12]{AHL}. Let $u$ be an arbitrarily given function satisfying the assumption above.    By density we may assume $u$ is rapidly decreasing  on $\mathbb R^6.$     Recall  $\xi $ is the dual variable of $x$ and  $\hat  u (\xi, v)$ is the partial Fourier transform of $u(x,v)$ with respect to $x.$
Then we have   \begin{eqnarray*}
  	\widehat{Pu}(t,\xi, v) =\inner{\partial_t+iv\cdot\xi-\mathcal L}\hat u (t,\xi, v).
  \end{eqnarray*}

   Let $\lambda^{\rm Wick}$ be the Wick quantization (see   Appendix \ref{secapp} for the definition of Wick quantization)  of symbol $\lambda$ with       
\begin{equation*} \label{cutcut}
\lambda(v,\eta) =
\lambda_{\xi}(v,\eta)=\frac{  d(v,\eta)}{ \tilde a(v,\xi)^{\frac{2s}{1+2s}}}\chi\bigg( \frac{  \tilde a(v,\eta)}{  \tilde a(v,\xi)^{\frac{1}{1+2s}}}\bigg),
\end{equation*}
where $\chi\in C_0^\infty(\mathbb R;~[0,1])$ such that
$\chi=1$ on
$[-1,1]$ and supp~$\chi \subset[-2,2]$ and 
 \begin{eqnarray*}
d(v,\eta) = \comii v^{\gamma}\inner{1+\abs v^2+\abs\xi^2+\abs{v\wedge \xi}^2}
^{s-1}\Big(\xi\cdot\eta+(v\wedge \xi)\cdot(v\wedge \eta)\Big).
\end{eqnarray*}
Recall $\tilde a $
 is defined in \eqref{defofa}.  Direct computation shows 
\begin{eqnarray*}
	\lambda\in   S\big(1, \abs{dv}^2+\abs{d\eta}^2\big)
\end{eqnarray*}
uniformly with respect to $\xi.$
As a result $\lambda^{\rm Wick}$ is a bounded operator in $L^2:$
\begin{equation}\label{bdsym}
\forall\ u\in  L^2, \quad 	\norm{\lambda^{\rm Wick}u}_{L^2}\leq C_{\gamma,s} \norm{u}_{L^2}
\end{equation}
for some constant $C_{s,\gamma}$ depending only on $s$ and $\gamma.$ 
The advantage of $\lambda^{\rm Wick}$  lies in the fact that the interaction between $\lambda^{\rm Wick}$  and the transport part will yield the regularity in $x.$  Precisely,  observing    $ v\cdot\xi=( v\cdot\xi)^{\rm Wick}$ with $\xi$ a parameter,  we use the relationship \eqref{11082406} in   Appendix \ref{secapp} to get 
\begin{eqnarray*}
	{\rm Re} \inner{i (v\cdot\xi) \hat u,\   \lambda^{\rm Wick}\hat u}_{L^2(\mathbb R_v^3)} =\frac{1}{2} \inner{ \big\{\lambda, v\cdot\xi\big\}^{\rm Wick} \hat u,\  \hat u}_{L^2(\mathbb R_v^3)},
\end{eqnarray*}
 with $\set{\cdot, \cdot}$ the Poisson
bracket defined by \eqref{11051505}.
Moreover  using  the positivity property of Wick quantization (see Appendix \ref{subwick})  yields   
\begin{eqnarray*}
	 &&\frac{1}{2}\inner{ \big\{\lambda, v\cdot\xi\big\}^{\rm Wick} \hat u,\  \hat u}_{L^2(\mathbb R_v^3)} \\
	 &\geq & \frac{1}{2}\inner{ \big(\tilde a(v,\xi)^{\frac{1}{1+2s}}\big)^{\rm Wick} \hat u,\  \hat u}_{L^2(\mathbb R_v^3)}   - C_1\inner{ \big(\tilde a(v,\eta)\big)^{\rm Wick} \hat u,\  \hat u}_{L^2(\mathbb R_v^3)}\\
	 &\geq& c_2\norm{\comii v^{\gamma/(2+4s)}\comi{\xi}^{s/(1+2s)}\hat u}_{L^2(\mathbb R_v^3)}^2	   - C_2 \norm{  (a^{1/2})^w \hat u}_{L^2(\mathbb R_v^3)}^2, 
\end{eqnarray*}
where  and throughout the proof  $0<c_2<1$ and  we use $ C_j, j\geq 1,$ to denote different    constants depending on $s$ and $\gamma$;   see    
 \cite[Lemma 4.12]{AHL} for     proving the above inequalities in detail.    Combining these   estimates we conclude, using the fact that $\gamma\geq 0, $ 
\begin{equation}\label{fie}
	c_2\norm{\comi{\xi}^{s/(1+2s)}\hat u}_{L^2(\mathbb R_v^3)}^2\leq 	{\rm Re} \inner{i (v\cdot\xi) \hat u,\   \lambda^{\rm Wick}\hat u}_{L^2(\mathbb R_v^3)}+    C_2 \norm{  (a^{1/2})^w \hat u}_{L^2(\mathbb R_v^3)}^2.
\end{equation}
As for the first term on the right-hand side we have  
\begin{eqnarray*}
	&&	{\rm Re} \inner{i (v\cdot\xi) \hat u,\   \lambda^{\rm Wick}\hat u}_{L^2(\mathbb R_v^3)}\\
	&= &{\rm Re} \inner{\widehat{P   u},\   \lambda^{\rm Wick}\hat u}_{L^2(\mathbb R_v^3)}-{\rm Re} \inner{\partial_t \hat{   u},\   \lambda^{\rm Wick}\hat u}_{L^2(\mathbb R_v^3)} +{\rm Re} \inner{\mathcal L \hat{   u},\   \lambda^{\rm Wick}\hat u}_{L^2(\mathbb R_v^3)}\\
		&\leq &	{\rm Re} \inner{\widehat{P   u},\   \lambda^{\rm Wick}\hat u}_{L^2(\mathbb R_v^3)}-\frac{1}{2}\frac{d}{dt} \inner{ \hat{   u},\   \lambda^{\rm Wick}\hat u}_{L^2(\mathbb R_v^3)}+C_3  \norm{a^{1/2})^w\hat u}_{L^2(\mathbb R_v^3)}^2,
\end{eqnarray*}
the last line holding because $\lambda^{\rm Wick}$ is self-adjoint in $L^2(\mathbb R^3_v)$ and moreover  using the assertions  $(i)$ and $(iii)$ in Proposition \ref{estaa} gives
\begin{eqnarray*}
	&&\big| {\rm Re}  \inner{\mathcal L \hat{   u},\   \lambda^{\rm Wick}\hat u}_{L^2(\mathbb R_v^3)}\big|\\
	&=&\big|{\rm Re}\big( \underbrace{ \big[ (a^{1/2})^w\big]^{-1}  \mathcal L  \big[ (a^{1/2})^w\big]^{-1} }_{\textrm{bounded operator}} (a^{1/2})^w \hat{   u},\   \underbrace{  (a^{1/2})^w \lambda^{\rm Wick}\big[ (a^{1/2})^w\big]^{-1}  }_{\textrm{bounded operator}} (a^{1/2})^w\hat u\big)_{L^2(\mathbb R_v^3)}\big|\\
	&\leq& C_3 \norm{a^{1/2})^w\hat u}_{L^2(\mathbb R_v^3)}^2.
\end{eqnarray*}
Combining the above estimate with \eqref{fie}  we obtain
\begin{multline*}
	c_2\norm{\comi{\xi}^{\frac{s}{1+2s}}\hat u}_{L^2(\mathbb R_v^3)}^2\\
	 \leq {\rm Re}  \inner{\widehat{P   u},\   \lambda^{\rm Wick}\hat u}_{L^2(\mathbb R_v^3)}-\frac{1}{2}\frac{d}{dt} \inner{ \hat{   u},\   \lambda^{\rm Wick}\hat u}_{L^2(\mathbb R_v^3)}+C_4  \norm{(a^{1/ 2})^w\hat u}_{L^2(\mathbb R_v^3)}^2.
\end{multline*}
Define the operator $\mathcal A $  by
\begin{eqnarray*}
\widehat{\mathcal A  u}(\xi, v)=\lambda^{\rm Wick} \hat u(\xi,v).
\end{eqnarray*}
Then it follows from the above inequality and 
 Plancherel formula  that  
\begin{equation}\label{diff31}
	c_2\norm{\comi{D_x}^{\frac{s}{1+2s}} u}_{L^2}^2 \leq \big|\inner{ P   u,\   \mathcal A  u}_{L^2}\big|-\frac{1}{2}\frac{d}{dt} \inner{ u,\   \mathcal A  u}_{L^2}+C_4  \norm{(a^{1/ 2})^w  u}_{L^2}^2
\end{equation}
and
 \begin{equation}\label{uppfora}
	\norm{\mathcal A  u}_{L^2} = \norm{ \lambda^{\rm Wick} \hat u}_{L^2(\mathbb R_{\xi,v}^6)}\leq C_{s,\gamma} \norm{ \hat u}_{L^2(\mathbb R_{\xi,v}^6)} =  C_{s,\gamma} \norm{  u}_{L^2}. 
	 \end{equation} 
Now we choose such a  $N$  that 
\begin{equation}\label{chn}
	N =\max\big\{ 2C_4/c_1, 2C_{\gamma, s}\big\} +4
\end{equation}
with $C_4$ given in \eqref{diff31},  $c_1$ the number in \eqref{diffes} and $ C_{\gamma, s}$ the constant in \eqref{bdsym}.  Then   we  multiply both sides of \eqref{diffes} by $N$ and  then  add to \eqref{diff31};  this gives
\begin{equation}\label{mt}
\begin{aligned}
	& \frac{N}{2}\frac{d}{dt}\norm{u}_{L^2}^2+ c_2\norm{\comi{D_x}^{\frac{s}{1+2s}} u}_{L^2}^2+\frac{Nc_1}{2}\norm{(a^{1/2})^wu}_{L^2}^2 \\
	 \leq & N \big|\inner{ P   u,\   u}_{L^2}\big|+\big|\inner{ P   u,\   \mathcal A  u}_{L^2}\big|-\frac{1}{2}\frac{d}{dt} \inner{ u,\   \mathcal A  u}_{L^2}+N  \norm{ u}_{L^2}^2 ,
\end{aligned}
\end{equation}
and thus
\begin{eqnarray*}\label{timein}
	&& \frac{N}{2}\frac{d}{dt} \inner{t^{2r}\norm{u}_{L^2}^2}+ c_2t^{2r}\norm{\comi{D_x}^{\frac{s}{1+2s}} u}_{L^2}^2+ \frac{Nc_1}{2}t^{2r}\norm{(a^{1/2})^wu}_{L^2}^2 \\
	& \leq&  N t^{2r} \big|\inner{ P   u,\   u}_{L^2}\big|+ t^{2r} \big|\inner{ P   u,\   \mathcal A  u}_{L^2}\big|-\frac{1}{2}\frac{d}{dt} \Big[t^{2r} \inner{ u,\   \mathcal A  u}_{L^2}\Big]+N t^{2r} \norm{ u}_{L^2}^2\\
&&	+Nrt^{2r-1}\norm{u}_{L^2}^2+rt^{2r-1}\inner{ u,\   \mathcal A  u}_{L^2}.
\end{eqnarray*}
Integrating both side over the interval $[0,t]$ for any $0<t\leq 1$ and observing $t^{r-1/2}u\in L^\infty\inner{[0,1];\ L^2}$ which along with \eqref{uppfora} implies 
\begin{eqnarray*}
0\leq \lim_{t\rightarrow 0}t^{2r} \abs{\inner{ u,\   \mathcal A  u}_{L^2}	}\leq C_{\gamma,s}\lim_{t\rightarrow 0}t^{2r}  \norm{ u }_{L^2} ^2 =0,
\end{eqnarray*} 
we obtain, using \eqref{uppfora} again, 
\begin{eqnarray*}
	&& \frac{N}{2}  t^{2r}\norm{u(t)}_{L^2}^2 + c_2\int_0^t t^{2r}\norm{\comi{D_x}^{\frac{s}{1+2s}} u}_{L^2}^2dt+ \frac{Nc_1}{2}\int_0^t t^{2r}\norm{(a^{1/2})^wu}_{L^2}^2dt \\
	& \leq&  N\int_0^t  t^{2r} \Big|\inner{ P   u,  u}_{L^2}\Big| dt +\int_0^t t^{2r} \big|\inner{ P   u,   \mathcal A  u}_{L^2} \big|dt+\frac{C_{\gamma,s}}{2}  t^{2r} \norm{ u(t)}_{L^2}^2\\
	&&+N \int_0^t t^{2r} \norm{ u}_{L^2}^2dt+\inner{N+C_{\gamma,s}} r \int_0^t t^{2r-1}\norm{u}_{L^2}^2dt
	\end{eqnarray*}
for any $0<t\leq 1.$	  Thus, observing $r\geq 1$ and $C_{\gamma, s}\leq N/2$ due to \eqref{chn},
	\begin{eqnarray*}
	&& \frac{N}{4}  t^{2r}\norm{u}_{L^2}^2 + c_2\int_0^1 t^{2r}\norm{\comi{D_x}^{\frac{s}{1+2s}} u(t)}_{L^2}^2dt+ \frac{Nc_1}{2}\int_0^1 t^{2r}\norm{(a^{1/2})^wu(t)}_{L^2}^2dt \\
	& \leq & N\int_0^1  t^{2r} \big|\inner{ P   u,  u}_{L^2}\big| dt +\int_0^1 t^{2r} \big|\inner{ P   u, \mathcal A  u}_{L^2}  \big| dt+3N  r \int_0^1 t^{2r-1}\norm{u}_{L^2}^2dt.
	\end{eqnarray*}
The above inequality holds for all $0< t\leq 1.$   
Thus the desired \eqref{subest} follows if we choose   $C=24 \max\set{\frac{1}{c_1},  \frac{2N}{c_2}}.$  Similarly   integrating \eqref{mt} over $[1, t[$ for any $t>1,$
 we obtain the estimate \eqref{1infty}.

It remains to prove the assertions in \eqref{bd}, and  the first one follows from \eqref{uppfora}.
	    The second assertion in \eqref{bd} is obvious since the spatial variable $x$ is not involved in the symbol $\lambda.$  To prove the last assertion,  we only need work with  the $L^2(\mathbb R_{\xi, v}^6)$-norm by Plancherel formula.   The symbol of the commutator 
	 \begin{eqnarray*}
	 	\big[\lambda^{\rm Wick} , \  (a^{1/2})^w\big] 
	 \end{eqnarray*}
	 belongs to $S(\tilde a^{1/2}, \abs{dv}^2+\abs{d\eta}^2)$ since $\lambda\in S(1, \abs{dv}^2+\abs{d\eta}^2)$ uniformly for $\xi$ and $a^{1/2}\in S(\tilde a^{1/2}, \abs{dv}^2+\abs{d\eta}^2).$ As a result  we  can write 
	 \begin{eqnarray*}
	 	\big[\lambda^{\rm Wick}, \  (a^{1/2})^w\big] =\underbrace{	\big[\lambda^{\rm Wick}, \  (a^{1/2})^w\big] \big[(a^{1/2})^w\big]^{-1}}_{\textrm{bounded operator}} (a^{1/2})^w
	 \end{eqnarray*} 
	due to the conclusions $(i)$ and $ (iii)$ in Proposition \ref{estaa},  and thus  the third assertion in \eqref{bd} follows.  The proof is then completed. 
\end{proof}

\section{Gevrey regularity in spatial variable}
 This part  is devoted to proving the Gevrey smoothing effect in spatial variable $x,$ that is,  
\begin{theorem}\label{thm3.1}
Under the same assumption as in Theorem \ref{maith}, we can find 
  a positive constant $C_0$,  depending only on $s,\gamma$ and the constant $\epsilon_0$ in \eqref{smacon},   such that    	\begin{equation*}
	\forall\ \abs{\alpha}\geq 0,\quad	\sup_{t>0} \phi(t)^{\frac{1+2s}{2s}\abs\alpha}\norm{ \partial_{x}^{\alpha} f(t)}_{L^2} \leq C_{0}^{\abs\alpha+1}\inner{{\abs\alpha!}}^{\frac{1+2s}{2s}}.
	\end{equation*}
 Recall $\phi(t)=\min \set{t,1}.$
\end{theorem}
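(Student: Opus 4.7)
The plan is to argue by strong induction on $N = |\alpha|$, carrying the joint hypothesis $(H_N)$:
\[
\sup_{t>0}\phi(t)^{\mu|\beta|}\|\partial_x^\beta f(t)\|_{L^2} + \Bigl(\int_0^{+\infty}\phi(\tau)^{2\mu|\beta|}\|(a^{1/2})^w\partial_x^\beta f\|_{L^2}^2\,d\tau\Bigr)^{1/2} \leq C_0^{|\beta|+1}(|\beta|!)^\mu
\]
for all $|\beta|\leq N$, together with the analogous bound on the integrated $x$-gain $\int_0^1\phi(\tau)^{2\mu|\beta|}\|\langle D_x\rangle^{s/(1+2s)}\partial_x^\beta f\|_{L^2}^2\,d\tau$, where $\mu = (1+2s)/(2s)$. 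The base cases $N\leq 2$ are immediate from \eqref{smacon} once $C_0$ is chosen sufficiently large relative to $\epsilon_0$.

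For the inductive step, fix $|\alpha| = N+1$. Since $P$ commutes with $\partial_x$, Leibniz gives $P(\partial_x^\alpha f) = \sum_{\alpha'\leq\alpha}\binom{\alpha}{\alpha'}\Gamma(\partial_x^{\alpha'}f,\partial_x^{\alpha-\alpha'}f)$. Apply Proposition \ref{prosub} to $u = \partial_x^\alpha f$ with $r = \mu(N+1)$, using \eqref{subest} on $(0,1]$ and \eqref{1infty} on $[1,+\infty)$ with initial datum $\partial_x^\alpha f(1)$ controlled from the previous step on $(0,1]$. The nonlinear pieces $\int \tau^{2r}|(\partial_x^\alpha\Gamma(f,f),\psi)_{L^2}|\,d\tau$ with $\psi\in\{\partial_x^\alpha f,\mathcal A\partial_x^\alpha f\}$ are bounded by applying the trilinear inequalities according to which factor carries few $x$-derivatives: \eqref{tripleest+} or \eqref{tripleest++++} when $|\alpha'|\leq 2$, \eqref{tripleest} when $|\alpha-\alpha'|\leq 2$, and \eqref{newesti} for the generic split. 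In each case the low-order factor is absorbed by the $\epsilon_0$-smallness hypothesis \eqref{smacon}, while the high-order factor is controlled by $(H_N)$; Cauchy-Schwarz in $\tau$ passes the integrated LHS of the subelliptic estimate (after using \eqref{bd} to handle $\mathcal A$) to either side. The combinatorial identity $\sum_k \binom{N+1}{k}(k!)^\mu((N+1-k)!)^\mu \leq A\bigl((N+1)!\bigr)^\mu$ for an absolute constant $A$ then closes the Gevrey factorial.

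The most delicate term is the residual $rC\int_0^t\tau^{2r-1}\|\partial_x^\alpha f\|^2\,d\tau$, intrinsic to the time-weighted energy method and carrying the unbounded factor $r = \mu(N+1)$. I would dispatch it by combining the $x$-Sobolev interpolation
\[
\|\partial_x^\alpha f\|_{L^2}^2 \leq C\|\partial_x^{\alpha-e_i}f\|_{L^2}^{2s/(1+3s)}\|\langle D_x\rangle^{s/(1+2s)}\partial_x^\alpha f\|_{L^2}^{2(1+2s)/(1+3s)}
\]
with Young's inequality, choosing a $\tau$-dependent Young parameter so that the high-derivative factor is absorbed into the LHS gain $\int\tau^{2r}\|\langle D_x\rangle^{s/(1+2s)}\partial_x^\alpha f\|^2\,d\tau$ and the complementary low-derivative factor is controlled via $(H_N)$ applied to $\partial_x^{\alpha-e_i}f$. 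The numerical identity $\mu\cdot s/(1+2s) = 1/2$ is what makes the exponents balance at exactly the critical threshold; managing it forces one to exploit the integrated form of $(H_N)$ rather than only the pointwise one, and to absorb the polynomial factor $r$ into one factorial of $(N+1)$ already present in the Gevrey bound.

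The principal obstacle is precisely this residual step: it lies at the borderline dictated by the Gevrey index $(1+2s)/(2s)$, and a naive bound loses a logarithm or a polynomial in $N$ that would spoil the index. Once it is dispatched and the trilinear/combinatorial bookkeeping is closed uniformly in $N$, a single large choice of $C_0$ depending only on $s$, $\gamma$, and $\epsilon_0$ propagates both parts of $(H_N)$ to $(H_{N+1})$.
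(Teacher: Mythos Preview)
Your overall strategy---induction on $|\alpha|$, Proposition~\ref{prosub} applied to $\partial_x^\alpha f$ with time weight $t^{2r}$, and trilinear control via \eqref{tripleest}--\eqref{newesti}---matches the paper's. The gap is in the residual term, precisely where you flag it as delicate. With the interpolation you wrote,
\[
\|\partial_x^\alpha f\|^2 \leq C\,\|\partial_x^{\alpha-e_i}f\|^{2s/(1+3s)}\,\|\comi{D_x}^{\frac{s}{1+2s}}\partial_x^\alpha f\|^{2(1+2s)/(1+3s)},
\]
Young's inequality with parameter $\delta=\eps\tau/r$ produces on the low side
\[
C_\eps\, r^{(1+3s)/s}\int_0^1 \tau^{2\mu N-1}\|\partial_x^{\alpha-e_i}f\|^2\,d\tau.
\]
Neither piece of $(H_N)$ controls this: the pointwise bound $\tau^{2\mu N}\|\partial_x^{\alpha-e_i}f\|^2\leq M_N^2$ leaves a divergent $\int_0^1\tau^{-1}d\tau$, and the integrated bounds carry weight $\tau^{2\mu N}$, not $\tau^{2\mu N-1}$. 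Moreover, even granting convergence, the prefactor $r^{(1+3s)/s}\sim N^{2\mu+1}$ exceeds the single factorial gain $(N+1)^{2\mu}$ available in passing from $(N!)^{2\mu}$ to $((N+1)!)^{2\mu}$, so the induction would leak an extra factor of $N$ per step. The paper fixes both issues at once by interpolating instead between $\|\comi{D_x}^{\frac{s}{1+2s}}\partial_x^{\alpha-e_i}f\|$ and $\|\comi{D_x}^{\frac{s}{1+2s}}\partial_x^\alpha f\|$ (see \eqref{interp} and \eqref{keyes}): this lowers the Young exponent to $(1+s)/s$, so the low-order term lands with weight exactly $\tau^{2\mu N}$ and prefactor $r^{(1+2s)/s}=r^{2\mu}$, matching the integrated $\comi{D_x}^{\frac{s}{1+2s}}$ bound in $(H_N)$ and the factorial count exactly.

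A secondary point: Proposition~\ref{prosub} requires a priori that $t^{r-1/2}u\in L^\infty([0,1];L^2)$ and $t^r\comi{D_x}^{\frac{s}{1+2s}}u,\ t^r(a^{1/2})^w u\in L^2([0,1]\times\mathbb R^6)$, which you do not yet know for $u=\partial_x^\alpha f$ at the new level. The paper closes this by working with the regularization $f_\delta=(1+\delta|D_x|^2)^{-1}f$, proving the estimate uniformly in $\delta$ (since $\Lambda_\delta^{-2}$ commutes with $P$ and with $(a^{1/2})^w$), and letting $\delta\to0$; you should include this justification step.
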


We will use induction to prove the above theorem, and the following proposition is crucial. 

\begin{proposition}\label{pro3.2} 
   Denote   $\kappa=(1+2s)/2s.$      Let  $f\in L^\infty\inner{[0,+\infty[;  H^2}$ be any solution  to the Cauchy problem  \eqref{3} satisfying the condition \eqref{smacon}.     Suppose additionally that there exists a positive constant $C_*\geq 1$,  depending only on $s,\gamma$ and the constant $\epsilon_0$ in \eqref{smacon} such that for any multi-index $\beta$ with  $2\leq \abs\beta\leq 3$ we have
   \begin{equation}\label{indass}
   \begin{aligned}
   &\sup_{t>0}  \phi(t)^{\kappa(\abs\beta-2)}\norm{ \partial_{x}^{\beta} f(t)}_{L^2}+\inner{\int_0^{+\infty} \phi(t)^{2\kappa(\abs\beta-2)} \norm{ \comi{D_x}^{\frac{s}{1+2s}}\partial_{x}^{\beta} f(t)}_{L^2}^2 dt}^{1/ 2}\\
	  	   &\qquad \qquad +\inner{\int_0^{+\infty}  \phi(t)^{2\kappa(\abs\beta-2)} \norm{ (a^{1/2})^w\partial_{x}^{\beta} f(t)}_{L^2}^2 dt}^{1/ 2}  \leq  
	  	  C_*.	
	\end{aligned}
   \end{equation}
Let $m\geq 5$ be an arbitrarily given integer.  Then we can find a positive constant $C_0 \geq C_*$,  depending only on $s,\gamma$ and  the constant $\epsilon_0$ in \eqref{smacon}    but independent of $m,$  such that if the following estimate 
  \begin{equation}\label{dayu3+}
 \begin{aligned}  	  
  &\sup_{t>0}  \phi(t)^{\kappa(\abs\beta-2)}\norm{ \partial_{x}^{\beta} f(t)}_{L^2}+\inner{\int_0^{+\infty} \phi(t)^{2\kappa(\abs\beta-2)} \norm{ \comi{D_x}^{\frac{s}{1+2s}}\partial_{x}^{\beta} f(t)}_{L^2}^2 dt}^{1\over2}\\
	  	   &\qquad\qquad\qquad+\inner{\int_0^{+\infty}  \phi(t)^{2\kappa(\abs\beta-2)} \norm{ (a^{1/2})^w\partial_{x}^{\beta} f(t)}_{L^2}^2 dt}^{1/ 2} \\
	  	\leq & \   
	  	  C_0^{ \abs\beta-3}\com{ (\abs\beta-4) !}^{\frac{1+2s}{2s}}	  	\end{aligned}
	\end{equation}
	holds for any  $\beta$ with $ 4\leq \abs\beta\leq m-1,
$   
then for any multi-index $\alpha$ with $\abs\alpha=m$ we have
	 $\phi(t)^{\kappa(m-2)} \partial_{x}^\alpha f \in L^\infty\inner{]0, +\infty[;\ L^2}$    and 
 $$\phi(t)^{\kappa(m-2)} \comi{D_{x}}^{\frac{s}{1+2s}}\partial_{x}^\alpha f, \ \phi(t)^{\kappa(m-2)} (a^{1/2})^w\partial_{x}^\alpha f  \in   L^2\inner{]0, +\infty[\times \mathbb R_{x,v}^6},$$
 and moreover 
  \begin{multline}\label{malpha}
  	   \sup_{t>0}  \phi(t)^{\kappa(m-2)}\norm{ \partial_{x}^{\alpha} f(t)}_{L^2}+\inner{\int_0^{+\infty} \phi(t)^{2\kappa(m-2)} \norm{ \comi{D_x}^{\frac{s}{1+2s}}\partial_{x}^{\alpha} f(t)}_{L^2}^2 dt}^{1/ 2}\\
	  	   +\inner{\int_0^{+\infty}  \phi(t)^{2\kappa(m-2)} \norm{ (a^{1/2})^w\partial_{x}^{\alpha} f(t)}_{L^2}^2 dt}^{1/ 2} 
	  	\leq \ C_0^{ m-3}\com{ (m-4) !}^{\frac{1+2s}{2s}}.
  \end{multline}
 \end{proposition}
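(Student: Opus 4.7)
The proof proceeds by induction on $m$, based on the hypotheses \eqref{smacon}, \eqref{indass}, and \eqref{dayu3+}. The starting observation is that $\partial_x^\alpha$ commutes with every piece of $P$ (neither $\partial_t$, nor $v$, nor $\mathcal L$ depends on $x$), so applying $\partial_x^\alpha$ to $Pf=\Gamma(f,f)$ and using the Leibniz rule gives
\begin{equation*}
P(\partial_x^\alpha f) = \sum_{\beta\leq\alpha}\binom{\alpha}{\beta}\Gamma(\partial_x^\beta f,\,\partial_x^{\alpha-\beta}f).
\end{equation*}
I then invoke the subelliptic estimate, namely Proposition \ref{prosub}(i) on $[0,1]$ and (ii) on $[1,+\infty[$, applied to $u=\partial_x^\alpha f$ with $r=\kappa(m-2)$; since $m\ge 5$ we have $r\ge 3\kappa>1$, so the admissibility $r\ge 1$ is met. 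The three LHS terms of \eqref{subest}--\eqref{1infty} produce precisely the three norms that appear in \eqref{malpha}; the two halves are tied together through the common boundary value $\partial_x^\alpha f(1)$.

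\textbf{Trilinear contributions.} The bilinear couplings $\int_0^1 t^{2r}|(\partial_x^\alpha\Gamma(f,f),\partial_x^\alpha f)_{L^2}|\,dt$ and the analogous pairing with $\mathcal A \partial_x^\alpha f$ appearing on the RHS of \eqref{subest} are expanded by Leibniz, and each summand is estimated using the trilinear bounds \eqref{tripleest}, \eqref{tripleest+}, \eqref{tripleest++++}, \eqref{newesti}. The sum over $\beta$ splits into three regimes: for extreme indices ($|\beta|\le 1$ or $|\alpha-\beta|\le 1$) the low-derivative factor is controlled by $\epsilon_0$ via \eqref{smacon}, yielding a smallness that absorbs the companion factors into the LHS; for base indices $2\le |\beta|,|\alpha-\beta|\le 3$ the hypothesis \eqref{indass} supplies the bound; for intermediate indices $4\le|\beta|\le m-4$ the induction hypothesis \eqref{dayu3+} controls both factors. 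In each regime the weight $t^{2r}$ is apportioned among the three factors to match the weighted norms in \eqref{dayu3+}, and Cauchy--Schwarz in $t$ pairs an $L^2_t$ control on $(a^{1/2})^w\partial_x^\alpha f$ (absorbed into the LHS) against $L^2_t$ bounds on the lower-order factors. The binomial-factorial convolution
\begin{equation*}
\sum_{k=4}^{m-4}\binom{m}{k}[(k-4)!]^{\kappa}[((m-k)-4)!]^{\kappa}\leq C_\kappa\,[(m-4)!]^{\kappa},
\end{equation*}
valid because $\kappa>1$, absorbs the combinatorics into the final Gevrey constant. The pairing with $\mathcal A\partial_x^\alpha f$ uses $[\mathcal A,\partial_x^\alpha]=0$ together with \eqref{bd} to handle the commutator $[\mathcal A,(a^{1/2})^w]$.

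\textbf{The boundary-weight term --- the main obstacle.} The remainder $rC\int_0^1 t^{2r-1}\|\partial_x^\alpha f\|^2\,dt$ on the RHS of \eqref{subest} is the principal technical difficulty, because $|\alpha|=m$ is not in the induction range. The plan is to integrate by parts in $t$: since $t^{2r-1}=(2r)^{-1}\partial_t(t^{2r})$ and $\lim_{t\to 0^+}t^{2r}\|\partial_x^\alpha f\|^2=0$ (an a priori vanishing built into the admissibility hypothesis of Proposition \ref{prosub}),
\begin{equation*}
r\int_0^1 t^{2r-1}\|\partial_x^\alpha f\|^2\,dt = \tfrac12\|\partial_x^\alpha f(1)\|_{L^2}^2 - \int_0^1 t^{2r}\,\mathrm{Re}(\partial_t\partial_x^\alpha f,\,\partial_x^\alpha f)_{L^2}\, dt.
\end{equation*}
Substituting $\partial_t\partial_x^\alpha f = -v\cdot\partial_x\partial_x^\alpha f + \mathcal L\partial_x^\alpha f + \partial_x^\alpha\Gamma(f,f)$, the transport part vanishes by integration by parts in $x$; the $\mathcal L$-part is dominated via Lemma \ref{lemdiff} by universal multiples of $\int_0^1 t^{2r}\|(a^{1/2})^w\partial_x^\alpha f\|^2\,dt$ and $\int_0^1 t^{2r}\|\partial_x^\alpha f\|^2\,dt$, which can be absorbed into the LHS of \eqref{subest} provided the constant $N$ in the proof of Proposition \ref{prosub} is chosen large enough beforehand; the $\Gamma$-part is treated exactly as in the preceding paragraph. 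The boundary value $\|\partial_x^\alpha f(1)\|^2$ is controlled by running the same estimate on a short interval $[1/2,1]$ with only lower-level induction, and the prefactor $r\lesssim m$ is absorbed into the Gevrey growth $[(m-4)!]^\kappa$ by enlarging $C_0$.

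Combining all bounds and choosing $C_0$ sufficiently large (depending on $s,\gamma,\epsilon_0$, $C_*$ but independent of $m$) closes the induction and produces \eqref{malpha}. The restriction $\kappa>1$ plays a double role: it makes the factorial convolution work, and it guarantees integrability of the residual time-weights $t^{2\kappa-1}$ arising when redistributing weights between induction levels.
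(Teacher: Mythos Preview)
Your handling of the Leibniz expansion and the trilinear bounds is broadly in line with the paper's argument, but there are two genuine gaps.

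\textbf{Missing regularization.} You apply Proposition~\ref{prosub}(i) directly to $u=\partial_x^\alpha f$ with $|\alpha|=m$. That proposition requires as an \emph{a priori} hypothesis that $t^{r-1/2}u\in L^\infty([0,1];L^2)$ and $t^r\comi{D_x}^{s/(1+2s)}u,\ t^r(a^{1/2})^w u\in L^2([0,1]\times\mathbb R^6)$. These are exactly the memberships you are trying to establish, so the application is circular. The paper avoids this by working with $f_\delta=(1+\delta|D_x|^2)^{-1}f$, for which the induction hypothesis at level $m-1$ already guarantees the required memberships for $\partial_{x_1}^m f_\delta$ (with $\delta$-dependent constants), and then passes to the limit $\delta\to 0$ after obtaining $\delta$-uniform bounds.

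\textbf{The boundary-weight term.} Your integration-by-parts in $t$ on $rC\int_0^1 t^{2r-1}\|\partial_x^\alpha f\|^2\,dt$ regenerates, via the $\mathcal L$ part of $\partial_t$, a term $C\int_0^1 t^{2r}\big(-(\mathcal L\partial_x^\alpha f,\partial_x^\alpha f)\big)\,dt\approx C\int_0^1 t^{2r}\|(a^{1/2})^w\partial_x^\alpha f\|^2\,dt$. Here $C$ is the fixed constant from Proposition~\ref{prosub}, which is large ($C\geq 24/c_1$ in the proof), so this cannot be absorbed by the left-hand side (coefficient $1$). Your proposed fix ``choose $N$ large enough'' does not help: tracing the proof of Proposition~\ref{prosub}, both the LHS coefficient $Nc_1/2$ on $\|(a^{1/2})^w u\|^2$ and the regenerated RHS coefficient $3NC'$ scale linearly in $N$, leaving the impossible requirement $C'<c_1/6$ between fixed equivalence constants from Lemma~\ref{lemdiff}. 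In effect, the integration by parts simply undoes the energy estimate.

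The paper's device here is different and is the heart of the proof: one uses the interpolation inequality~\eqref{interp} in the $x$-frequency to write
\begin{equation*}
m\,t^{2\kappa(m-2)-1}\|\partial_{x_1}^m f_\delta\|_{L^2}^2
\leq \eps\, t^{2\kappa(m-2)}\|\comi{D_x}^{\frac{s}{1+2s}}\partial_{x_1}^m f_\delta\|_{L^2}^2
+C_\eps\, m^{\frac{1+2s}{s}}\, t^{2\kappa(m-3)}\|\comi{D_x}^{\frac{s}{1+2s}}\partial_{x_1}^{m-1} f\|_{L^2}^2.
\end{equation*}
The first term is absorbed by the subelliptic LHS (this is precisely why the $\comi{D_x}^{s/(1+2s)}$-norm is propagated through the induction), while the second lands exactly on the induction hypothesis at level $m-1$ with the correct time-weight $t^{2\kappa(m-3)}$; the factor $m^{(1+2s)/s}=m^{2\kappa}$ combines with $[(m-5)!]^{2\kappa}$ to produce $[(m-4)!]^{2\kappa}$. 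This interpolation step, not integration by parts, is what closes the estimate.
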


Before proving the above proposition we first state   the interpolation inequality in Sobolev space which is to be used frequently.  Given  three numbers $r_j$ with $r_1<r_2<r_3$ we have
	\begin{equation}\label{interp}
	\forall\ \eps>0,\ \forall\ u\in H^{r_3},	\quad \norm{\comi{D_{x}}^{r_2} u}_{L^2}\leq     \eps \norm{\comi{D_{x}}^{r_3}u}_{L^2}+ \eps^{-\frac{r_2-r_1}{r_3-r_2}}\norm{\comi{D_{x}}^{r_1}u}_{L^2}.
	\end{equation}

\begin{proof}[Proof of Proposition \ref{pro3.2} (The case of $0<t\leq 1$)]  We first consider the case when $t\in]0, 1],$
 and in this part we will prove  the fact that   $t^{\kappa(m-2)} \partial_{x}^\alpha f \in L^\infty\inner{]0, 1];\ L^2}$    and 
 $$t^{\kappa(m-2)} \comi{D_{x}}^{\frac{s}{1+2s}}\partial_{x}^\alpha f, \  t^{\kappa(m-2)} (a^{1/2})^w\partial_{x}^\alpha f  \in   L^2\inner{]0, 1]\times \mathbb R_{x,v}^6} $$    for    any $\alpha$ with $\abs\alpha=m,$  and moreover the norms of these quantities are controlled  by the right-hand side of \eqref{malpha}.

	To do so we define  the regularization $f_\delta $ of $f$ with $0<\delta\ll 1,$  by setting 
\begin{equation}
	\label{fdelta}
	f_\delta=\Lambda_\delta^{-2} f, \quad \Lambda_\delta=\inner{1+\delta\abs{D_x}^2}^{1/2}.
\end{equation} 
Note $\Lambda_\delta$ is just the Fourier multiplier with the symbol $(1+\delta\abs\xi^2)^{1/2}.$  We have $[T,  \Lambda_\delta^{-2}]=0$ for any   operator  $T$  acting only on $v,$   and $\Lambda_\delta^{-2} $ is uniformly bounded in $L^2$ for $\delta$:
\begin{eqnarray*}
	\forall\ u\in L^2,\quad \norm{\Lambda_\delta^{-2} u}_{L^2}\leq \norm{u}_{L^2}.
\end{eqnarray*}
Observe the Fourier multiplier $\Lambda_\delta^{-2}$ is a bounded operator from $L^2(\mathbb R_x^3)$ to $H^2(\mathbb R_x^3)$ with the norm depending on $\delta.$   As a result it follows from the   assumption \eqref{dayu3+}  that
\begin{equation}\label{inass}
\left\{
\begin{aligned}
& t^{\kappa(m-2)-1/2}    \partial_{x_1}^m  f_\delta \in L^\infty\inner{[0, 1];\ L^2},\\
&  t^{\kappa(m-2)}  \comi{D_x}^{\frac{s}{1+2s}}  \partial_{x_1}^m  f_\delta,\   t^{\kappa(m-2)} (a^{1/2})^w\partial_{x_1}^m  f_\delta\in L^2\inner{[0, 1]\times \mathbb R^6}. 
\end{aligned}
\right. 
\end{equation}
 To simply the notation we will use $C$ in the following discussion to denote different suitable constants  which depend only on  $s,\gamma$ and the constant $\epsilon_0$ in \eqref{smacon},   but independent of $m$ and the number $\delta$ in    the Fourier multiplier $\Lambda_\delta^{-2}$,  and moreover denote by  $C_\eps$  different  constants depending on $\eps$ additionally. 
 \medskip

\noindent\underline{\it Step 1 (Upper bound for the trilinear terms). } Recall $\mathcal A  $ is the bounded operator  given in Proposition \ref{prosub} with the properties in \eqref{bd} fulfilled and $\epsilon_0$ is the number in \eqref{smacon}.    Let  $f_\delta$ be the regularization of $f$ given by  \eqref{fdelta}.   In this step we will show that, for any $\eps>0,$  
\begin{equation}\label{uppder}
\begin{aligned}
	&\int_0^1  t^{2 \kappa  (m-2)}\abs{ \inner{P \partial_{x_1}^m f_\delta, \    \partial_{x_1}^m f_\delta}_{L^2} }dt+\int_0^1  t^{2 \kappa  (m-2)}\abs{ \inner{P \partial_{x_1}^m f_\delta, \ \mathcal A   \partial_{x_1}^m f_\delta}_{L^2} }dt\\
	& \leq    \inner{\eps  +\epsilon_0 C}	 \int_0^1 t^{2 \kappa  (m-2) }   \norm{(a^{1/2})^w   \partial_{x_1}^m f _\delta}_{L^2}^2 dt  +   C_\eps   C_0^{2(m-4)}  \com{(m-4)!}^{\frac{1+2s}{s}}.
	\end{aligned}
	\end{equation}
To  confirm this  we use the fact that  $[P, \  \partial_{x_1}^m\Lambda_\delta^{-2}]=0$  and  that $f$ solves the equation $Pf=\Gamma(f,f);$  this gives, recaling $f_\delta=\Lambda_\delta^{-2} f,$
\begin{eqnarray*}
	P \partial_{x_1}^m f_\delta=\Lambda_\delta^{-2} \partial_{x_1}^m \Gamma(f,f)=\Lambda_\delta^{-2} \sum_{j\leq m} {m\choose j} \Gamma(\partial_{x_1}^j f, \ \partial_{x_1}^{m-j}f).
\end{eqnarray*}
Thus
\begin{equation}\label{dees}
 \int_0^1   t^{2 \kappa  (m-2)}\abs{ \inner{P \partial_{x_1}^m f_\delta, \ \mathcal A    \partial_{x_1}^m f_\delta}_{L^2}}\ dt 
	 \leq    S_1+  S_2+S_3+S_4,
\end{equation}
with
\begin{equation*}
\begin{aligned}
& S_1=  \int_0^1  t^{2 \kappa  (m-2)} \abs{\inner{\Lambda_\delta^{-2} \Gamma( f,\ \partial_{x_1}^{m}f), \ \mathcal A    \partial_{x_1}^m f_\delta}_{L^2}}\ dt,\\
  &S_2=\sum_{1\leq j< [{m\over 2}]} {m\choose j} \int_0^1  t^{2 \kappa  (m-2)} \abs{\inner{\Lambda_\delta^{-2} \Gamma(\partial_{x_1}^j f,\ \partial_{x_1}^{m-j}f), \ \mathcal A    \partial_{x_1}^m f_\delta}_{L^2}}\ dt,\\
	& S_3=\sum_{ [{m\over 2}]\leq  j \leq m-1} {m\choose j}\int_0^1   t^{2 \kappa  (m-2)} \abs{\inner{  \Lambda_\delta^{-2}   \Gamma(\partial_{x_1}^j f, \ \partial_{x_1}^{m-j}f),  \ \mathcal A   \partial_{x_1}^m f_\delta}_{L^2}} dt\\
	& S_4= \int_0^1   t^{2 \kappa  (m-2)} \abs{\inner{  \Lambda_\delta^{-2}   \Gamma(\partial_{x_1}^m f, \ f),  \ \mathcal A   \partial_{x_1}^m f_\delta}_{L^2}} dt, 
	 \end{aligned}
\end{equation*}
where $[m/2]$ stands for the largest integer less than or equal to $m/2.$
We first handle $S_1$ and use the fact that  $(1-\delta\Delta_x)\Lambda_\delta^{-2}=1$ to write 
\begin{multline*}
	\Gamma(f,\ \partial_{x_1}^{m}f)=\Gamma( f,\ \partial_{x_1}^{m}(1-\delta\Delta_x)f_\delta)\\
	=(1-\delta\Delta_x) \Gamma(f,\ \partial_{x_1}^{m}f_\delta)+ 2\sum_{k=1}^3\delta \partial_{x_k} \Gamma(\partial_{x_k} f,\ \partial_{x_1}^{m}f_\delta)- \delta  \Gamma(\Delta_x f,\ \partial_{x_1}^{m}f_\delta).
\end{multline*}
Thus
\begin{eqnarray*}
	S_1&\leq &  \int_0^1  t^{2 \kappa  (m-2)} \abs{\inner{  \Gamma( f,\ \partial_{x_1}^{m}f_\delta), \ \mathcal A    \partial_{x_1}^m f_\delta}_{L^2}}\ dt\\
	&&+2\sum_{k=1}^3 \int_0^1  t^{2 \kappa  (m-2)} \abs{\inner{  \Gamma( \partial_{x_k}f,\ \partial_{x_1}^{m}f_\delta), \ \delta\partial_{x_k} \Lambda_\delta^{-2}  \mathcal A    \partial_{x_1}^m f_\delta}_{L^2}}\ dt\\
	&&+  \int_0^1  t^{2 \kappa  (m-2)} \abs{\inner{ \Gamma( \Delta_xf,\ \partial_{x_1}^{m}f_\delta), \  \delta\Lambda_\delta^{-2} \mathcal A    \partial_{x_1}^m f_\delta}_{L^2}}\ dt\\
	&\stackrel{\rm def}{=}& S_{1,1}+S_{1,2}+S_{1,3}.
\end{eqnarray*}
    Using  \eqref{tripleest}  and \eqref{bd}  gives
\begin{equation}
\label{s11}
\begin{aligned}
S_{1,1} & \leq C \int_0^1  t^{2 \kappa  (m-2)}   \sum_{\abs \beta \leq 2}\norm{  \partial_x^\beta f}_{L^2} \norm{(a^{1/2})^w  \partial_{x_1}^{m}f_\delta}_{L^2 }    \norm{(a^{1/2})^w  \mathcal A    \partial_{x_1}^m f_\delta }_{L^2}\ dt\\
& \leq   C\sup_{0\leq t\leq 1}\sum_{\abs\beta \leq 2} \norm{  \partial_x^\beta f(t)}_{L^2}  \int_0^1 t^{2 \kappa  (m-2) }   \norm{(a^{1/2})^w \partial_{x_1}^m f_\delta }_{L^2}^2 dt\\
& \leq   \epsilon_0 C   \int_0^1 t^{2 \kappa  (m-2) }   \norm{(a^{1/2})^w \partial_{x_1}^m f_\delta }_{L^2}^2 dt,
	\end{aligned}
\end{equation}
the last inequality using \eqref{smacon}.  As for $S_{1,2}$ we use \eqref{newesti} and  the fact    that the operators $\Lambda_\delta^{-2}$ and $\delta\partial_{x_j}\partial_{x_k}\Lambda_\delta^{-2}$ are uniformly bounded in $L^2$ with respect to $\delta$ and both commute with $(a^{1/2})^w,$ to compute
\begin{equation}\label{s12}
	\begin{aligned}
S_{1,2} & \leq C \int_0^1  t^{2 \kappa  (m-2)} \Big(  \sum_{\abs \beta \leq 2}\norm{  \partial_x^\beta f}_{L^2}\Big) \norm{(a^{1/2})^w  \partial_{x_1}^{m}f_\delta}_{L^2 } \\
&\qquad\qquad\qquad \times  \Big( \sum_{\abs \beta \leq 2}  \norm{\delta \partial_x^\beta  \Lambda_\delta^{-2}  (a^{1/2})^w    \mathcal A    \partial_{x_1}^m f_\delta }_{L^2}\Big)\ dt\\
& \leq   C\sup_{0\leq t\leq 1}\sum_{\abs\beta \leq 2} \norm{  \partial_x^\beta f(t)}_{L^2}  \int_0^1 t^{2 \kappa  (m-2) }   \norm{(a^{1/2})^w \partial_{x_1}^m f_\delta }_{L^2}^2 dt\\
& \leq   \epsilon_0 C   \int_0^1 t^{2 \kappa  (m-2) }   \norm{(a^{1/2})^w \partial_{x_1}^m f_\delta }_{L^2}^2 dt,
	\end{aligned}
\end{equation}
the last inequality using again \eqref{smacon}.  Similarly we use \eqref{tripleest++++} to get 
\begin{equation*}
	\begin{aligned}
S_{1,3} & \leq C \int_0^1  t^{2 \kappa  (m-2)}  \norm{ \Delta_x f}_{L^2} \norm{(a^{1/2})^w  \partial_{x_1}^{m}f_\delta}_{L^2 }   \Big( \sum_{\abs \beta \leq 2}  \norm{\delta \partial_x^\beta  \Lambda_\delta^{-2}  (a^{1/2})^w    \mathcal A    \partial_{x_1}^m f_\delta }_{L^2}\Big)\ dt\\
& \leq   C\sup_{0\leq t\leq 1}  \norm{ \Delta_x f(t)}_{L^2}  \int_0^1 t^{2 \kappa  (m-2) }   \norm{(a^{1/2})^w \partial_{x_1}^m f_\delta }_{L^2}^2 dt\\
& \leq   \epsilon_0 C   \int_0^1 t^{2 \kappa  (m-2) }   \norm{(a^{1/2})^w \partial_{x_1}^m f_\delta }_{L^2}^2 dt.
	\end{aligned}
\end{equation*}
This along with \eqref{s11} and \eqref{s12} gives
\begin{equation}
\label{s1upp}
S_1\leq 	\epsilon_0 C   \int_0^1 t^{2 \kappa  (m-2) }   \norm{(a^{1/2})^w \partial_{x_1}^m f_\delta }_{L^2}^2 dt.
\end{equation}
Next we treat $S_2$ and use 
\eqref{tripleest} and \eqref{bd}  again to compute
\begin{equation*}
\begin{aligned}
 S_2\leq & C  \sum_{1\leq j< [m/2]} \frac{m !} {j!(m-j)!} \int_0^1  t^{2 \kappa  (m-2)} \bigg[ \Big(\sum_{\abs \beta \leq 2}\norm{ \partial_{x_1}^j\partial_x^\beta f}_{L^2}\Big) \norm{(a^{1/2})^w   \partial_{x_1}^{m-j}f}_{L^2 }\\
&\qquad\qquad\qquad\qquad \qquad\qquad\qquad\qquad   \times \norm{(a^{1/2})^w       \partial_{x_1}^m f_\delta }_{L^2}\bigg]\ dt \\
  \leq &  C \sum_{1\leq j< [m/2]} \frac{m !} {j!(m-k)!}  \inner{ \int_0^1 t^{2 \kappa  (m-2) }   \norm{(a^{1/2})^w    \partial_{x_1}^m f _\delta}_{L^2}^2 dt}^{1/2}  \\
	& \quad \times \sup_{0<t\leq 1}\sum_{\abs\beta \leq 2}t^{\kappa j} \norm{ \partial_{x_1}^{j}\partial_x^\beta f(t)}_{L^2} \inner{\int_0^1  t^{2\kappa  (m-j-2)}  \norm{(a^{1/2
	})^w \partial_{x_1}^{m-j}f}_{L^2 }^2  dt}^{1/2}.	
	\end{aligned}
\end{equation*}
  Moreover  using  the assumptions \eqref{indass} and \eqref{dayu3+} for $1\leq j< [m/2]$,  we compute, for any $\abs\beta\leq2,$
\begin{equation*}
\begin{aligned}
 	 \sup_{0<t\leq 1} t^{\kappa j} \norm{ \partial_{x_1}^{j}\partial_x^\beta f(t)}_{L^2} \leq &\ 
	 \left
\{
\begin{aligned}	  
&\sup_{0<t\leq 1} t^{\kappa (j+\abs\beta-2)} \norm{\partial_{x_1}^{j}\partial_x^\beta f(t)}_{L^2}, \  \textrm{if} \   j +\abs\beta\geq 2,\\
& \sup_{0<t\leq 1}   \norm{\partial_{x_1}^{j}\partial_x^\beta f(t)}_{L^2}, \  \textrm{if} \ j +\abs\beta\leq 1,
\end{aligned}
\right.\\
 \leq &  \   C C_0^{j-1}\com{\inner{ j-1}! }^{\frac{1+2s}{2s}}
\end{aligned}
\end{equation*}
 and     
\begin{eqnarray*}
  \inner{\int_0^1 t^{2 \kappa  (m-j-2 ) }  \norm{(a^{1/2})^w \partial_{x_1}^{m-j}f}_{L^2 }^2  \ dt}^{1/2} 
 \le C_0^{m-j-3}\com{(m-j-4)!} ^{\frac{1+2s}{2s}}.
\end{eqnarray*}
As a result we put these inequalities into the estimate on  $S_2$ to obtain
\begin{multline*}
  S_2 \leq  C  	\inner{ \int_0^1 t^{2 \kappa  (m-2) }   \norm{(a^{1/2})^w    \partial_{x_1}^m f _\delta}_{L^2}^2 dt}^{1/2}\\
  \times   \sum_{1\leq j < [m/2]} \frac{m !} {j!(m-j)!}  C_0^{j-1}\com{\inner{ j-1}! }^{\frac{1+2s}{2s}}  \Big(  C_0^{m-j-3}\com{(m-j-4)!} ^{\frac{1+2s}{2s}}   \Big),
\end{multline*}
and direct computation shows that
\begin{eqnarray*}
	&&\sum_{1\leq j < [m/2]} \frac{m !} {j!(m-j)!}  C_0^{j-1}\com{\inner{ j-1}! }^{\frac{1+2s}{2s}}  \Big(  C_0^{m-j-3}\com{(m-j-4)!} ^{\frac{1+2s}{2s}}  \Big)\\
	&\leq &C  C_0^{m-4}\sum_{1\leq j < [m/2]} \frac{m !} {j (m-j)^{4}}   \com{\inner{ j-1}! }^{\frac{1}{2s}}  \com{(m-j-4)!} ^{\frac{1}{2s}}\\
	&\leq & C  C_0^{m-4}\sum_{1\leq j < [m/2]} \frac{m !} {j (m-j)^{4}}      \com{(m-5)!} ^{\frac{1}{2s}}\\
	& \leq & C  C_0^{m-4}  \com{(m-4)!} ^{\frac{1+2s}{2s}} \sum_{1\leq j < [m/2]} \frac{1} { j  m^{\frac{1}{2s}}}   \leq   C  C_0^{m-4}  \com{(m-4)!} ^{\frac{1+2s}{2s}}, 
	\end{eqnarray*}
	the last inequality holding because  $$\frac{1}{m^{\frac{1}{2s}}}\sum_{1\leq j < [m/2]} \frac{1} { j  } \leq C_s$$ with $C_s$ a constant depending only on $s. $
Thus we combine  the above inequalities to  obtain,  for any $\eps>0,$ 
	\begin{equation}
	\label{s2upp}
		S_2\leq \eps  	 \int_0^1 t^{2 \kappa  (m-2) }   \norm{(a^{1/2})^w    \partial_{x_1}^m f _\delta}_{L^2}^2 dt +   C_\eps   C_0^{2(m-4)}  \com{(m-4)!}^{\frac{1+2s}{s}}.
	\end{equation}
	 The treatment of $S_3$ is similar as that of $S_2$,  using \eqref{tripleest+} here instead of \eqref{tripleest}.  Meanwhile following the argument for handling $S_1$ will yield  the upper bound of $S_4.$       For brevity we omit the details  and conclude that 
\begin{eqnarray*}
		S_3  \leq  \eps  	 \int_0^1 t^{2 \kappa  (m-2) }   \norm{(a^{1/2})^w    \partial_{x_1}^m f _\delta}_{L^2}^2 dt +   C_\eps   C_0^{2(m-4)}  \com{(m-4)!}^{\frac{1+2s}{s}}
				\end{eqnarray*}
	and
	\begin{eqnarray*}
		S_4\leq 	\epsilon_0 C   \int_0^1 t^{2 \kappa  (m-2) }   \norm{(a^{1/2})^w \partial_{x_1}^m f_\delta }_{L^2}^2 dt.
	\end{eqnarray*} 
This along with  the estimates \eqref{s1upp}-\eqref{s2upp} on $S_1$ and $S_2$  as well as  \eqref{dees} yields the desired upper bound for the second term on the left-hand side of  \eqref{uppder}.  Meanwhile the  first term can be handled in the same way with simpler argument.  Then we have proven \eqref{uppder}.
 
\medskip 
\noindent\underline{\it Step 2.}  In this step we will derive the desired estimate \eqref{dayu3+} for short time $0<t\leq 1,$  that is,   for any multi-index $\alpha$ with $\abs\alpha=m,$ we have 
\begin{multline}\label{esfor01}
		\sup_{0<t\leq 1}t^{\kappa(m-2)}\norm{ \partial_{x}^\alpha f(t)}_{L^2} +\inner{\int_0^1 t^{2\kappa(m-2)} \norm{\comi{D_{x}}^{\frac{s}{1+2s}}  \partial_{x}^\alpha f(t)}_{L^2}^2dt}^{1/2}\\
	+\inner{\int_0^1 t^{2\kappa(m-2)} \norm{(a^{1/2})^w \partial_{x}^\alpha f(t)}_{L^2}^2dt}^{1/2} \leq  \frac{1}{2}C_{0}^{m-3}[(m-4)!]^{\frac{1+2s}{2s}}.
	\end{multline}
To do so,  by \eqref{inass}  we  can apply the subelliptic estimate \eqref{subest} with $u= \partial_{x_1}^m f_\delta$ and $r=\kappa(m-2)$;  this gives  for any $0<t\leq 1,$
\begin{multline}\label{enestofr}
   t^{2\kappa\inner{  m-2}} \norm{ \partial_{x_1}^m  f_\delta}_{L^2}^2+\int_0^1  t^{2\kappa\inner{  m-2}} \norm{ \comi{D_x}^{\frac{s}{1+2s}} \partial_{x_1}^m  f_\delta}_{L^2}^2dt\\
  +\int_0^1  t^{2\kappa\inner{  m-2}} \norm{  (a^{1/2})^w  \partial_{x_1}^m f_\delta}_{L^2}^2 \leq  \mathcal M,
	\end{multline}
	where
	\begin{multline*}
	\mathcal M=  C\int_0^1  t^{2\kappa\inner{  m-2}} \abs{\inner{P   \partial_{x_1}^m f_\delta, \   \partial_{x_1}^m f_\delta}_{L^2}} dt\\  +C \int_0^1  t^{2\kappa\inner{  m-2}} \abs{\inner{P    \partial_{x_1}^m f_\delta, \ \mathcal A      \partial_{x_1}^m f_\delta}_{L^2}} +C m \int_0^1  t^{2\kappa\inner{  m-2}-1} \left\|  \partial_{x_1}^m f_\delta\right\|_{L^2}^2.
\end{multline*}
 As for the last term above we use the interpolation inequality \eqref{interp} to get,  for any $\eps>0,$ 
\begin{equation*}
\begin{aligned}
	 & m t^{2\kappa\inner{  m-2}-1}\norm{  \partial_{x_1}^m f_\delta}_{L^2}^2\\
	 \leq & \ \eps t^{2\kappa\inner{  m-2}}\norm{ \comi{D_{x}}^{\frac{s}{1+2s}} \partial_{x_1}^m f_\delta}_{L^2}^2 +C_\eps m^{\frac{1+2s}{s}}t^{2\kappa (m-3)}\norm{ \comi{D_x}^{\frac{s}{1+2s}}  \partial_{x_1}^{m-1} f_\delta}_{L^2}^2\\
	 \leq & \ \eps t^{2\kappa\inner{  m-2}}\norm{ \comi{D_{x}}^{\frac{s}{1+2s}} \partial_{x_1}^m f_\delta}_{L^2}^2 +C_\eps m^{\frac{1+2s}{s}}t^{2\kappa (m-3)}\norm{ \comi{D_x}^{\frac{s}{1+2s}}  \partial_{x_1}^{m-1} f}_{L^2}^2,
	 \end{aligned}
\end{equation*}
recalling    $\kappa=\frac{1+2s}{2s}.$
As a result, we use the assumption \eqref{dayu3+} to compute   
\begin{equation}\label{keyes}
\begin{aligned}
	&C m \int_0^1  t^{2\kappa\inner{  m-2}-1} \left\|   \partial_{x_1}^m f_\delta\right\|_{L^2}^2\\
	 \leq &   \eps \int_0^1  t^{2\kappa\inner{  m-2}}\norm{  \comi{D_{x}}^{\frac{s}{1+2s}}  \partial_{x_1}^m f_\delta}_{L^2}^2 dt +C_\eps m^{\frac{1+2s}{s}}\int_0^1 t^{2\kappa (m-3)}\norm{ \comi{D_x}^{\frac{s}{1+2s}}  \partial_{x_1}^{m-1} f}_{L^2}^2dt\\
	\leq &  \eps \int_0^1  t^{2\kappa\inner{  m-2}}\norm{  \comi{D_{x}}^{\frac{s}{1+2s}}  \partial_{x_1}^m f_\delta}_{L^2}^2 dt  +C_\eps  C_0^{2(m-4)}m^{\frac{1+2s}{s}}  \com{ (m-5) !}^{\frac{1+2s}{s}}\\
	\leq &  \eps \int_0^1  t^{2\kappa\inner{  m-2}}\norm{  \comi{D_{x}}^{\frac{s}{1+2s}} \partial_{x_1}^m f_\delta}_{L^2}^2 dt+C_\eps C_0^{2(m-4)}     \com{ (m-4) !}^{\frac{1+2s}{s}}.
\end{aligned}
\end{equation}
  Combining the above inequality and \eqref{uppder}, we get the upper bound of the term $\mathcal M$ on the right-hand side of \eqref{enestofr}; that is,
\begin{multline*}
	\mathcal M \leq   
  \inner{\eps  +\epsilon_0 C	} \int_0^1 t^{2 \kappa  (m-2) }   \norm{(a^{1/2})^w    \partial_{x_1}^m f _\delta}_{L^2}^2 dt\\
+  \eps \int_0^1  t^{2\kappa\inner{  m-2}}\norm{  \comi{D_{x}}^{\frac{s}{1+2s}}  \partial_{x_1}^m f_\delta}_{L^2}^2 dt +C_\eps C_0^{2(m-4)}     \com{ (m-4) !}^{\frac{1+2s}{s}}.  
\end{multline*}
  Suppose   $\epsilon_0$ is  small enough and  let  $\eps$ be small  as well    such that the first two terms on the right-hand side of the above inequality  can be    absorbed  by  the left ones  in \eqref{enestofr}.  Thus we conclude for any $0<t\leq 1$,  
\begin{multline*}
	  t^{2\kappa\inner{  m-2}} \norm{   \partial_{x_1}^m  f_\delta}_{L^2}^2+ \int_0^1  t^{2\kappa\inner{  m-2}} \norm{ \comi{D_x}^{\frac{s}{1+2s}}  \partial_{x_1}^m  f_\delta}_{L^2}^2dt\\
    + \int_0^1  t^{2\kappa\inner{  m-2}} \norm{  (a^{1/2})^w    \partial_{x_1}^m f_\delta}_{L^2}^2 \leq C  C_0^{2(m-4)}     \com{ (m-4) !}^{\frac{1+2s}{s}}.
		\end{multline*}
	 Since the constants $C$ and $C_0$ above are independent of $\delta,$  then letting $\delta\rightarrow 0$  implies that  $$ t^{\kappa\inner{  m-2}}    \partial_{x_1}^m  f\in   L^\infty\inner{]0,1];\ L^2} $$ 
	 and
	 \begin{eqnarray*}
	 	t^{\kappa\inner{  m-2}}\comi{D_x}^{\frac{s}{1+2s}}   \partial_{x_1}^m  f, \  t^{\kappa\inner{  m-2}}  (a^{1/2})^w \partial_{x_1}^m  f \in L^2\inner{]0,1]\times \mathbb R^6},
	 \end{eqnarray*}
and moreover  that 
	\begin{multline}\label{f1}
	  \sup_{0<t\leq 1}t^{\kappa\inner{  m-2}} \norm{   \partial_{x_1}^m  f}_{L^2}+\inner{\int_0^1  t^{2\kappa\inner{  m-2}} \norm{ \comi{D_x}^{\frac{s}{1+2s}}  \partial_{x_1}^m  f}_{L^2}^2dt}^{1/2}\\
    +\inner{ \int_0^1  t^{2\kappa\inner{  m-2}} \norm{  (a^{1/2})^w    \partial_{x_1}^m f}_{L^2}^2}^{1/2} \leq C  C_0^{m-4}     \com{ (m-4) !}^{\frac{1+2s}{2s}}.
		\end{multline} 
		The above estimate obviously  holds with $\partial_{x_1}^m$ replaced by $\partial_{x_j}^m, j=2, 3.$  Then using the fact that $\norm{\partial_{x}^{\alpha}  f}_{L^2 }^2\leq \norm{\partial_{x_1}^{m}  f}_{L^2 }^2+\norm{\partial_{x_2}^{m}  f}_{L^2 }^2+\norm{\partial_{x_3}^{m}  f}_{L^2 }^2$ for any $\abs\alpha=m,$  we conclude   for any $\alpha$ with $\abs\alpha=m,$ 
 	\begin{multline*}
	\sup_{0<t\leq 1}  t^{\kappa\inner{  m-2}} \norm{   \partial_{x}^\alpha  f}_{L^2}+\inner{\int_0^1  t^{2\kappa\inner{  m-2}} \norm{ \comi{D_x}^{\frac{s}{1+2s}}  \partial_{x}^\alpha  f}_{L^2}^2dt}^{1/2}\\
    +\inner{\int_0^1  t^{2\kappa\inner{  m-2}} \norm{  (a^{1/2})^w    \partial_{x}^\alpha f}_{L^2}^2}^{1/2} \leq C C_0^{m-4}     \com{ (m-4) !}^{\frac{1+2s}{2s}}.
		\end{multline*} 
Then 	the desired estimate \eqref{esfor01} follows  if we take $C_0\geq 2 C$ with $C$ the constant in the above inequality.    
\end{proof}

\begin{proof}
	[Completeness of the proof of Proposition \ref{pro3.2} (the case of $t\geq 1$)]   
	  It remains to prove the validity of \eqref{malpha}  in Proposition \ref{pro3.2}   when $t>1.$    The proof is quite similar as in the case of $0<t\leq 1,$ and the argument here will be simpler since this part is just the propagation property of Gevrey regularity.   
	Indeed,  we apply the   estimate \eqref{1infty} for $u=  \partial_{x_1}^m f_\delta$;  this gives, for any $t\geq 1,$  
	\begin{equation}\label{enestofr++}
	\begin{aligned}
  &   \norm{   \partial_{x_1}^m  f_\delta(t)}_{L^2}^2+ \int_1^{+\infty}   \norm{  \comi{D_x}^{\frac{s}{1+2s}}    \partial_{x_1}^m f_\delta}_{L^2}^2+ \int_1^{+\infty}   \norm{  (a^{1/2})^w    \partial_{x_1}^m f_\delta}_{L^2}^2\\
  \leq & \  \norm{   \partial_{x_1}^m  f_\delta(1)}_{L^2}^2 +   C\int_1^{+\infty}   \abs{\inner{P   \partial_{x_1}^m f_\delta, \   \partial_{x_1}^m f_\delta}_{L^2}} dt\\
  & +   C\int_1^{+\infty}   \abs{\inner{P   \partial_{x_1}^m f_\delta, \  \mathcal A \partial_{x_1}^m f_\delta}_{L^2}} dt 
 + C\int_1^{+\infty}   \norm{ \partial_{x_1}^m f_\delta}_{L^2}^2 dt.
 \end{aligned}
\end{equation}
As for the first term on the right-hand side, we have  obtained  in \eqref{f1} its upper bound:
\begin{eqnarray*}
	\norm{   \partial_{x_1}^m  f_\delta(1)}_{L^2}^2\leq \norm{   \partial_{x_1}^m  f(1)}_{L^2}^2\leq C C_0^{2(m-4)}  \com{(m-4)!}^{\frac{1+2s}{s}}.
\end{eqnarray*}
Repeating the argument for proving \eqref{uppder}, we see the second and third terms on the right-hand side of \eqref{enestofr++} are bounded from above by 
	\begin{equation*} 
\begin{aligned}
    \inner{\eps  +\epsilon_0 C}	 \int_1^{+\infty}    \norm{(a^{1/2})^w   \partial_{x_1}^m f _\delta}_{L^2}^2 dt  +   C_\eps   C_0^{2(m-4)}  \com{(m-4)!}^{\frac{1+2s}{s}},
	\end{aligned}
	\end{equation*}
	with $\eps$ arbitrarily small. 
	As for the last term in \eqref{enestofr++}, we use  interpolation equality \eqref{interp} and then the assumption \eqref{dayu3+}   to obtain
	\begin{equation*}
\begin{aligned}
	  \int_1^{+\infty}\norm{  \partial_{x_1}^m f_\delta}_{L^2}^2\ dt 
	 \leq & \ \eps \int_1^{+\infty} \norm{ \comi{D_{x}}^{\frac{s}{1+2s}} \partial_{x_1}^m f_\delta}_{L^2}^2 + C_\eps \int_1^{+\infty} \norm{ \comi{D_x}^{\frac{s}{1+2s}}  \partial_{x_1}^{m-1} f_\delta}_{L^2}^2 \\
	 \leq & \ \eps \int_0^1\norm{ \comi{D_{x}}^{\frac{s}{1+2s}} \partial_{x_1}^m f_\delta}_{L^2}^2 \ dt+C_\eps C_0^{2(m-4)}  \com{(m-5)!}^{\frac{1+2s}{s}}.
	 \end{aligned}
\end{equation*}
Finally  supposing  $\epsilon_0$ is small enough and choosing $\eps$ small as well,    we combine  the above inequalities   to  get, for any $t>1,$  
	\begin{eqnarray*}
&& \norm{   \partial_{x_1}^m  f_\delta(t)}_{L^2}^2+\int_1^{+\infty}   \norm{ \comi{D_x}^{\frac{s}{1+2s}}  \partial_{x_1}^m  f_\delta}_{L^2}^2dt +\int_1^{+\infty}  \norm{  (a^{1/2})^w    \partial_{x_1}^m f_\delta}_{L^2}^2\\
		&\leq & C  C_0^{2(m-4)}  \com{(m-4)!} ^{\frac{1+2s}{s}}.
		\end{eqnarray*}	
The remaining argument is just the same as that in the previous case of $0<t\leq 1$, so we omit it here and conclude that,  for any $\abs\alpha=m,$
\begin{multline*}
  \sup_{t\geq 1} \norm{   \partial_{x}^\alpha  f(t)}_{L^2}+\Big(\int_1^{+\infty}   \norm{ \comi{D_x}^{\frac{s}{1+2s}}  \partial_{x_1}^m  f_\delta}_{L^2}^2dt\Big)^{1/2} +\Big(\int_1^{+\infty}  \norm{  (a^{1/2})^w    \partial_{x_1}^m f_\delta}_{L^2}^2\Big)^{1/2}\\
		\leq  \frac{1}{2}  C_0^{m-3}  \com{(m-4)!} ^{\frac{1+2s}{2s}}.
\end{multline*}
 This along with \eqref{esfor01} yields \eqref{malpha} as desired.   The proof of  Proposition \ref{pro3.2} is  completed. 
\end{proof}

 \begin{proposition}\label{pro3.2+} 
 Denote   $\kappa=(1+2s)/2s.$  Assume that the cross-section satisfies \eqref{kern} and \eqref{angu}  with $0 < s < 1$  and $\gamma \geq 0.$
 	Let  $f\in L^\infty\inner{[0,+\infty];~H^2}$  be any solution to \eqref{3} satisfying \eqref{smacon}.        Then  there exists a constant $C_*,$ depending only on $s,\gamma$ and the constant $\epsilon_0$ in \eqref{smacon},  such that 
  for any multi-index $\beta$ with  $2\leq \abs\beta\leq 4$ we have
   \begin{multline}\label{indass+}
   \sup_{t>0}  \phi(t)^{\kappa(\abs\beta-2)}\norm{ \partial_{x}^{\beta} f(t)}_{L^2}+\inner{\int_0^{+\infty} \phi(t)^{2\kappa(\abs\beta-2)} \norm{ \comi{D_x}^{\frac{s}{1+2s}}\partial_{x}^{\beta} f(t)}_{L^2}^2 dt}^{1/ 2}\\
	  	   +\inner{\int_0^{+\infty}  \phi(t)^{2\kappa(\abs\beta-2)} \norm{ (a^{1/2})^w\partial_{x}^{\beta} f(t)}_{L^2}^2 dt}^{1\over 2}  \leq  
	  	  C_*.	
   \end{multline}
 \end{proposition}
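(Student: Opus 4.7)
The plan is to prove \eqref{indass+} by a short induction on $|\beta|$ from $2$ to $4$, mimicking the scheme of Proposition \ref{pro3.2} but with no factorial bookkeeping since $|\beta|$ is bounded. At each step I fix a multi-index $\alpha$ with $|\alpha|=|\beta|$, regularize $f_\delta=\Lambda_\delta^{-2}f$ as in \eqref{fdelta}, apply the subelliptic estimates of Proposition \ref{prosub} to $u:=\partial_x^\alpha f_\delta$ on $[0,1]$ and $[1,+\infty[$ separately, and finally let $\delta\to 0$. The base case $|\beta|=2$ is the delicate one because the weight $\phi(t)^0=1$ forbids a direct use of \eqref{subest} (which requires $r\geq 1$). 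Here the sup-in-$t$ bound on $\partial_x^\beta f$ and the integrated bound on $(a^{1/2})^w\partial_x^\beta f$ come straight from \eqref{smacon}; only the subelliptic gain $\int_0^{+\infty}\|\comi{D_x}^{s/(1+2s)}\partial_x^\beta f\|_{L^2}^2\,dt<+\infty$ is new, and for this I would revisit the \emph{unweighted} differential inequality \eqref{mt} from the proof of Proposition \ref{prosub}, integrate it on $[0,+\infty[$, bound the initial datum by $\|f\|_{L^\infty_t H^2}\leq\epsilon_0$, absorb the zeroth-order remainder $N\int\|u\|_{L^2}^2\,dt$ using the coercivity $\|u\|_{L^2}\lesssim\|(a^{1/2})^w u\|_{L^2}$ (valid since $\gamma\geq 0$ makes the symbol $a^{1/2}$ bounded below by a positive constant), and control the trilinear remainders $(Pu,u)$, $(Pu,\mathcal{A}u)$ by \eqref{tripleest}--\eqref{newesti} exactly as for $S_1$ and $S_4$ in Step 1 of the proof of Proposition \ref{pro3.2}.

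For $|\beta|=3$ and $|\beta|=4$ I apply \eqref{subest} on $[0,1]$ and \eqref{1infty} on $[1,+\infty[$ to $u=\partial_x^\alpha f_\delta$ with $r=\kappa(|\beta|-2)\geq 1$. The source term
\[
P\partial_x^\alpha f_\delta=\Lambda_\delta^{-2}\sum_{j\leq|\alpha|}\binom{|\alpha|}{j}\Gamma(\partial_x^j f,\,\partial_x^{\alpha-j}f)
\]
produces only $|\alpha|+1\leq 5$ trilinear pieces. Those whose two factors both carry at most $2$ derivatives are controlled directly by \eqref{smacon}; those with one factor carrying $3$ derivatives (only present when $|\beta|=4$, for $j=1,3$) are controlled by the $|\beta|=3$ step already established; and the top terms $\Gamma(f,\partial_x^\alpha f)$ and $\Gamma(\partial_x^\alpha f,f)$, which contain $(a^{1/2})^w\partial_x^\alpha f$ itself, are absorbed into the left-hand side using the smallness of $\epsilon_0$, exactly as the terms $S_1$ and $S_4$ in the proof of Proposition \ref{pro3.2}. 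The remainder $rC\int_0^1 t^{2r-1}\|u\|_{L^2}^2\,dt$ coming from \eqref{subest} is handled by the interpolation \eqref{interp} as in \eqref{keyes}: it produces a contribution of the form $\eps\int_0^1 t^{2r}\|\comi{D_x}^{s/(1+2s)}u\|_{L^2}^2\,dt+C_\eps\int_0^1 t^{2\kappa(|\beta|-3)}\|\comi{D_x}^{s/(1+2s)}\partial_x^{\alpha-e_j}f\|_{L^2}^2\,dt$ whose second integral is already controlled by the preceding step of the induction. Passing to $\delta\to 0$ then closes \eqref{indass+} for $|\beta|=3$ and $4$.

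The main obstacle is the quantitative closing of these top trilinear terms for $|\beta|=3,4$: they involve $(a^{1/2})^w\partial_x^\alpha f$, the very quantity being controlled on the left-hand side, and must be absorbed rather than estimated outright. This absorption succeeds precisely because the companion factor is uniformly bounded by $\epsilon_0$ through \eqref{smacon}, producing a prefactor that is made arbitrarily small by further shrinking $\epsilon_0$. Since $|\beta|\leq 4$ is bounded and the binomial coefficients $\binom{|\alpha|}{j}$ are absolute constants, no factorial blow-up arises and the resulting constant $C_*$ depends only on $s$, $\gamma$ and $\epsilon_0$, as required.
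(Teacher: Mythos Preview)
Your proposal is correct and follows essentially the same approach as the paper, which itself only sketches the argument (``quite similar as that of Proposition \ref{pro3.2}\ldots we omit it for brevity''): a short induction on $|\beta|$ from $2$ to $4$, using the subelliptic estimates of Proposition \ref{prosub} and the trilinear bounds \eqref{tripleest}--\eqref{newesti}, with the time-weight remainder handled via the interpolation step \eqref{keyes}. One small clarification on the $|\beta|=2$ case: the term $N\int\|u\|_{L^2}^2\,dt$ is not absorbed into the left-hand side (the constants in \eqref{mt} need not permit this) but rather bounded outright, since coercivity $\|u\|_{L^2}\lesssim\|(a^{1/2})^w u\|_{L^2}$ combined with \eqref{smacon} gives $\int_0^{+\infty}\|\partial_x^\beta f\|_{L^2}^2\,dt\leq C\epsilon_0^2$ directly.
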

 
 \begin{proof}
 	The proof is quite similar as that of  Proposition \ref{pro3.2}.  In fact using a similar subelliptic estimate as \eqref{1infty} and  repeating the procedure for proving Proposition \ref{pro3.2}  we have
 	\begin{eqnarray*}
 		 \sum_{\abs\beta=2}\inner{\int_0^{+\infty}  \norm{ \comi{D_x}^{\frac{s}{1+2s}}\partial_{x}^{\beta} f(t)}_{L^2}^2 dt}^{1/ 2}\leq C,
 	\end{eqnarray*}
 	for some some constant  $C$  depending only on $s,\gamma$ and the constant $\epsilon_0.$   This, along with the assumption \eqref{smacon}, yields the validity of \eqref{indass+} for $\abs\beta=2.$ Furthermore repeating again the argument for proving Proposition \ref{pro3.2} we can verify directly that \eqref{indass+} holds for  $\abs\beta=3$ and then for $ \abs\beta= 4.$ 	Since   the argument involved here is direct and simpler than the one in Proposition \ref{pro3.2},  we omit it for brevity.     
 \end{proof}

Now we can prove the main result on the Gevrey regularization in spatial variable. 

\begin{proof}
	[Proof of  Theorem \ref{thm3.1}] By Proposition \ref{pro3.2+} we see the assumption \eqref{indass}  in Proposition \ref{pro3.2}  holds and moreover the induction assumption \eqref{dayu3+}  holds for any $\beta$ with $\abs\beta= 4$ provided $C_0\geq C_*.$
	   This along with  Proposition \ref{pro3.2} enables to 
	   use induction  to obtain  that   for any $\alpha\in\mathbb Z_+^3$ with $\abs\alpha\geq 4,$   we have  
	   \begin{multline}\label{onspa}
	 	  	\sup_{t>0}  \phi(t)^{\kappa(\abs\alpha -2)}\norm{ \partial_{x}^{\alpha } f(t)}_{L^2}+\inner{\int_0^{+\infty} \phi(t)^{2\kappa(\abs\alpha -2)} \norm{ \comi{D_x}^{\frac{s}{1+2s}}  \partial_{x}^{\alpha } f(t)}_{L^2}^2 dt}^{1/2}\\
	  	+\inner{\int_0^{+\infty}  \phi(t)^{2\kappa(\abs\alpha -2)} \norm{ (a^{1/2})^w\partial_{x}^{\alpha } f(t)}_{L^2}^2 dt}^{1/2} 
	  	\leq \ C_0^{ \abs\alpha -3}\com{ (\abs\alpha -4) !}^{\frac{1+2s}{2s}}.
	\end{multline}
	As a result, for any $t>0$  and   any $\abs\alpha \geq 0$ we have
	\begin{equation*}
		\phi(t)^{\kappa \abs\alpha  }\norm{ \partial_{x}^{\alpha } f(t)}_{L^2}\leq
		\left\{
		\begin{aligned} 
		& \norm{ \partial_{x}^{\alpha } f(t)}_{L^2},\quad \textrm{if}\ \abs\alpha \leq 2\\
		&\phi(t)^{\kappa(\abs\alpha -2)}\norm{ \partial_{x}^{\alpha } f}_{L^2},\  \textrm{if}\ \abs\alpha  \geq 3
		\end{aligned}
\right.  
\leq C_0^{ \abs\alpha +1}\inner{ \abs\alpha  !}^{\frac{1+2s}{2s}},
	\end{equation*}
	completing the proof of Theorem   \ref{thm3.1}.  
\end{proof}

\section{Gevrey regularization  in velocity variable}

As in the previous section,  the Gevrey regularization for $v$ variable is just an immediate consequence of the following proposition.

\begin{proposition}\label{progevofv}
 Denote  $\kappa=(1+2s)/2s$ and  let $m\geq 5$ be an arbitrarily given integer.   Let  $f\in L^\infty\inner{[0,+\infty[;  H^2}$ be any solution  to the Cauchy problem  \eqref{3} satisfying the condition \eqref{smacon}.     Suppose additionally that there exists a positive constant $\tilde C_*\geq 1$,  depending only on $s,\gamma$ and the constant $\epsilon_0$ in \eqref{smacon} such that for any multi-index $\beta$ with  $2\leq \abs\beta\leq 3$ we have
   \begin{equation*} 
   \sup_{t>0}  \phi(t)^{\kappa(\abs\beta-2)}\norm{ \partial_{v}^{\beta} f(t)}_{L^2} +\inner{\int_0^{+\infty}  \phi(t)^{2\kappa(\abs\beta-2)} \norm{ (a^{1/2})^w\partial_{v}^{\beta} f(t)}_{L^2}^2 dt}^{1\over 2}  \leq  
	  	  \tilde C_*.	
   \end{equation*}
 Then we can find a constant  $\tilde C_0\geq \max\{\tilde C_*, C_0^2\} $ with $C_0$   given in Theorem \ref{thm3.1},    depending only on $s,\gamma$ and the constant $\epsilon_0$ in \eqref{smacon}  but independent of $m,$   such that 
  if for any multi-index  $ \beta$ with $4\leq \abs\beta\leq m-1$ we have
 \begin{equation}\label{asonv}
 \begin{aligned}
	  &	\sup_{t>0}\phi(t)^{\kappa( \abs\beta-2)}\norm{  \partial_v^\beta f(t)}_{L^2}+\inner{\int_0^\infty \phi(t)^{2\kappa(\abs\beta-2)} \norm{ (a^{1/2})^w\partial_v^\beta f(t)}_{L^2}^2 dt}^{1/ 2}\\
	  \leq  & \tilde C_0^{  \abs\beta-3}\com{ ( \abs\beta-4) !}^{\frac{1+2s}{2s}},
	  \end{aligned}
	\end{equation}
    then the above estimate \eqref{asonv} still holds for any $\beta$ with $\abs\beta=m.$ 
\end{proposition}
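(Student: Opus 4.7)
The plan is to follow the induction and regularization scheme of Proposition \ref{pro3.2}, with the subelliptic estimate in $x$ (Proposition \ref{prosub}) replaced by the elliptic estimate in $v$ (Proposition \ref{prpellp}) since we now differentiate in velocity. Setting $f_\delta=\Lambda_\delta^{-2}f$ as before and applying $\partial_v^\alpha$ with $\abs\alpha=m$ to $Pf=\Gamma(f,f)$, the commutator identity $[\partial_v^\alpha,v\cdot\partial_x]=\sum_j\alpha_j\,\partial_v^{\alpha-e_j}\partial_{x_j}$ yields
\[
P\partial_v^\alpha f_\delta=\Lambda_\delta^{-2}\partial_v^\alpha\Gamma(f,f)-\sum_{j=1}^3\alpha_j\,\Lambda_\delta^{-2}\partial_v^{\alpha-e_j}\partial_{x_j}f+\Lambda_\delta^{-2}[\partial_v^\alpha,\mathcal L]f.
\]
I would then apply Proposition \ref{prpellp} to $u=\partial_v^\alpha f_\delta$ with $r=\kappa(m-2)$, reducing the argument to bounding $\int_0^1 t^{2r}\abs{(P\partial_v^\alpha f_\delta,\partial_v^\alpha f_\delta)_{L^2}}\,dt$ and the time-weight term $r\int_0^1 t^{2r-1}\norm{\partial_v^\alpha f_\delta}_{L^2}^2\,dt$.

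The nonlinear contribution $(\Lambda_\delta^{-2}\partial_v^\alpha\Gamma(f,f),\partial_v^\alpha f_\delta)_{L^2}$ is expanded by a Leibniz-type formula for $\Gamma$, absorbing Gaussian weight factors into the cross-section, and estimated via the trilinear bounds \eqref{tripleest}--\eqref{newesti} combined with the induction hypothesis \eqref{asonv}, following the combinatorial splitting into $S_1,\ldots,S_4$ used in the proof of Proposition \ref{pro3.2}. The collision commutator $[\partial_v^\alpha,\mathcal L]=-[\partial_v^\alpha,a^w]-[\partial_v^\alpha,\mathcal R]$ is handled by the symbolic calculus of Proposition \ref{estaa}: since the Weyl symbol of $[\partial_v^\alpha,a^w]$ has $\eta$-order one less than that of $\partial_v^\alpha\cdot a^w$, its contribution is controlled by $\norm{(a^{1/2})^w\partial_v^{\alpha'}f}_{L^2}$ with $\abs{\alpha'}\leq m-1$, to which \eqref{asonv} applies. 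The time-weight term is controlled by an interpolation of the form $\norm{\partial_v^\alpha u}_{L^2}^2\leq\eta\norm{(a^{1/2})^w\partial_v^\alpha u}_{L^2}^2+C_\eta\norm{(a^{1/2})^w\partial_v^{\alpha-e_j}u}_{L^2}^2$ analogous to \eqref{keyes}, with the first term absorbed on the left and the second controlled by the induction.

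The main obstacle is the transport commutator contribution
\[
\sum_{j=1}^3\alpha_j\int_0^1 t^{2r}\abs{(\Lambda_\delta^{-2}\partial_v^{\alpha-e_j}\partial_{x_j}f,\,\partial_v^\alpha f_\delta)_{L^2}}\,dt,
\]
in which the mixed derivative $\partial_v^{\alpha-e_j}\partial_{x_j}f$ appears. This is precisely where Theorem \ref{thm3.1} enters. Splitting the pairing via $\abs{(F,G)_{L^2}}\leq C\norm{F}_{L^2}\norm{(a^{1/2})^wG}_{L^2}$, which follows from Proposition \ref{estaa}(iii), allows the factor $\norm{(a^{1/2})^w\partial_v^\alpha f_\delta}_{L^2}$ to be absorbed on the left-hand side of the elliptic estimate, reducing matters to controlling $\int_0^1 t^{2r}\norm{\partial_v^{\alpha-e_j}\partial_{x_j}f}_{L^2}^2\,dt$. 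The $\partial_{x_j}$ factor is handled using the Gevrey $x$-bounds from Theorem \ref{thm3.1} (including the integrated $(a^{1/2})^w$-estimates in \eqref{onspa}), while the $\partial_v^{\alpha-e_j}$ factor is handled by the induction hypothesis \eqref{asonv} at order $m-1$; combining them via Cauchy--Schwarz and the same combinatorial summation identity as in the proof of Proposition \ref{pro3.2} produces a bound compatible with $\tilde C_0^{m-3}\com{(m-4)!}^{(1+2s)/(2s)}$, provided $\tilde C_0\geq\max\set{\tilde C_*,C_0^2}$ is chosen large enough (the appearance of $C_0^2$ reflecting the product of one $x$-Gevrey factor and one $v$-induction factor). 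Choosing $\eps,\eta$ small, exploiting smallness of $\epsilon_0$ in \eqref{smacon} for the trilinear contributions, and then letting $\delta\to 0$ closes the induction on $0<t\leq 1$; the range $t\geq 1$ is handled identically via the long-time analog of the elliptic estimate.
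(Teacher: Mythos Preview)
Your overall scheme is right, but there are two genuine gaps.

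\textbf{Collision commutator.} Treating $[\partial_v^\alpha,a^w]$ by symbolic calculus does not close the induction. In the flat metric $\abs{dv}^2+\abs{d\eta}^2$, passing from $a$ to $\partial_v^k a$ gains nothing, so the constants in the $S(\tilde a,\cdot)$ seminorms of the commutator symbol blow up with $m$; you will not get a bound uniform in $m$, let alone a factorial one. The paper does not use symbolic calculus here. It goes back to the explicit form $-\mathcal L f=\Gamma(\mu^{1/2},f)+\Gamma(f,\mu^{1/2})=\mathcal T(\mu^{1/2},f,\mu^{1/2})+\mathcal T(f,\mu^{1/2},\mu^{1/2})$ and applies Leibniz, exploiting the analytic bound $\abs{\partial_{v_1}^k\mu^{1/2}}\leq L^{k+1}k!\,\mu^{1/4}$. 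This is what turns the trilinear estimates \eqref{tripleest}--\eqref{newesti} into a geometrically summable series in $j$ and produces the correct factorial growth.

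\textbf{Transport commutator.} The mixed term $\partial_v^{\alpha-e_j}\partial_{x_j}f$ is a single function; there is no way to ``combine via Cauchy--Schwarz'' one $x$-Gevrey factor and one $v$-induction factor, and such a product would in any case have the wrong factorial count. The paper instead uses the elementary Fourier inequality (Young) $\abs{\xi_1}\abs{\eta_1}^{m-1}\leq \abs{\xi_1}^m+\abs{\eta_1}^m$, uniformly in $m$, to bound $\norm{\partial_{x_1}\partial_{v_1}^{m-1}f}_{L^2}$ by $\norm{\partial_{x_1}^m f}_{L^2}+\norm{\partial_{v_1}^m f}_{L^2}$. The pure $x$-part is then handled by the integrated subelliptic estimate \eqref{onspa}, and the pure $v$-part is the quantity being estimated. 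The requirement $\tilde C_0\geq C_0^2$ arises here because one needs $C_0^{2(m-3)}\leq \tilde C_0^{\,m-3}$, not because of any product of factors. Relatedly, since you are now differentiating in $v$, the regularizer must also act in $v$: the paper sets $f_{m,\delta}=(1+\delta\abs{D_v}^2)^{-1}\partial_{v_1}^m f$, not $\Lambda_\delta^{-2}\partial_{v}^\alpha f$; with an $x$-regularizer you have no a priori meaning for $\partial_v^\alpha f_\delta$ at order $m$, and the commutator structure with $(a^{1/2})^w$ is lost.
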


\begin{proof} Since the procedure of the proof  is  quite similar as in Proposition \ref{pro3.2},  we only  give a sketch here and will emphasize on the difference.  In the following argument we use the notation that
  \begin{eqnarray*}
  	f_{m,\delta}=  \big(1+\delta\abs{D_{v}}^2\big)^{-1} \partial_{v_1}^m  f.
  \end{eqnarray*}
  Apply Proposition \ref{prpellp} for   $u= 
  f_{m,\delta}$ and $r=\kappa(m-2)$;  this gives for any $0<t\leq 1,$
\begin{equation}\label{tildereterm}
\begin{aligned}
&  t^{2\kappa\inner{  m-2}} \norm{  f_{m,\delta}(t)}_{L^2}^2+ 
   \int_0^1  t^{2\kappa\inner{  m-2}} \norm{  (a^{1/2})^w  f_{m,\delta}}_{L^2}^2\\
\leq &   C\int_0^1  t^{2\kappa\inner{  m-2}} \abs{\inner{P    f_{m,\delta}, \   f_{m,\delta}}_{L^2}} dt +C m \int_0^1  t^{2\kappa\inner{  m-2}-1} \left\|    f_{m,\delta}\right\|_{L^2}^2. 
   \end{aligned}
	\end{equation}
For the last term on the right side of \eqref{tildereterm} we can repeat the argument for proving \eqref{keyes}, to obtain, for any $\eps, \tilde \eps>0, $
\begin{equation}\label{lae}
\begin{aligned}
&C m \int_0^1  t^{2\kappa\inner{  m-2}-1} \left\|     f_{m,\delta}\right\|_{L^2}^2\\
\leq & 	\tilde \eps  \int_0^1  t^{2\kappa\inner{m-2}} \norm{\comi{D_{v}}^{\frac{s}{1+2s}}   f_{m,\delta}}_{L^2}^2+C_{\tilde\eps} m^{\frac{1+2s}{s}} \int_0^1  t^{2\kappa\inner{  m-3}} \norm{ \comi{D_{v}}^{\frac{s}{1+2s}}f_{m-1,\delta}}_{L^2}^2\\
	\leq  & \eps \int_0^1  t^{2\kappa\inner{  m-2}}\norm{  (a^{1/2})^w    f_{m,\delta}}_{L^2}^2 dt+C_\eps m^{\frac{1+2s}{s}}\int_0^1  t^{2\kappa\inner{  m-3}}\norm{  (a^{1/2})^w     \partial_{v_1}^{m-1}f}_{L^2}^2 dt\\
	\leq & \eps  \int_0^1  t^{2\kappa\inner{  m-2}}\norm{  (a^{1/2})^w   f_{m,\delta}}_{L^2}^2 dt+C_\eps \tilde C_0^{2(m-4)}     \com{ (m-4) !}^{\frac{1+2s}{s}},
	\end{aligned}
\end{equation}
the second equality holding because we may write
\begin{eqnarray*}
 \comi{D_{v}}^{\frac{s}{1+2s}} =\underbrace{ \comi{D_{v}}^{\frac{s}{1+2s}} \big[(a^{1/2})^w \big]^{-1}}_{\textrm{bounded operator}}(a^{1/2})^w
\end{eqnarray*}
due to the conclusions $(i)$ and $(iii)$ in Proposition \ref{estaa}, and the last inequality using the assumption \eqref{asonv}.

Next we treat the first term on the right side of \eqref{tildereterm}, the main different part from the previous section.   Observe 
\begin{multline*}
	 P   f_{m,\delta} =   \big(1+\delta\abs{D_{v}}^2\big)^{-1} \partial_{v_1}^m P f+  \big(1+\delta\abs{D_{v}}^2\big)^{-1} \big[ P,   \  \partial_{v_1}^m\big]f\\
	 +\big[ P,   \ \big(1+\delta\abs{D_{v}}^2\big)^{-1} \big]\partial_{v_1}^mf
\end{multline*}
and furthermore $P f=\Gamma(f,f).$
Then 
\begin{equation*}
	\int_0^1  t^{2\kappa\inner{  m-2}} \abs{\inner{P  f_{m,\delta}, \   f_{m,\delta}}_{L^2}} dt\leq 
\sum_{\ell =1}^3 \mathcal J_\ell,
\end{equation*}
with
\begin{eqnarray*}
		\mathcal J_{1}&=&\int_0^1  t^{2\kappa\inner{  m-2}}\Big|\Big(   \big(1+\delta\abs{D_{v}}^2\big)^{-1} \partial_{v_1}^m \Gamma (f,f),\ ~    f_{m,\delta}\Big)_{L^2}\Big|\ dt,\\
		\mathcal J_{2}&=&\int_0^1  t^{2\kappa\inner{  m-2}}\Big| \Big(   \big(1+\delta\abs{D_{v}}^2\big)^{-1}\com{ P,   \  \partial_{v_1}^m}f,  ~   f_{m,\delta}\Big)_{L^2}\Big|\ dt,\\
		\mathcal J_{3}&=&	\int_0^1  t^{2\kappa\inner{  m-2}}\Big|\Big(\big[ P,   \     \big(1+\delta\abs{D_{v}}^2\big)^{-1} \big]\partial_{v_1}^m f,\ ~   f_{m,\delta}\Big)_{L^2}\Big|\ dt.
\end{eqnarray*}

  \medskip
  
   \noindent\underline{\it Estimate on  $\mathcal J_1$}.  The $\mathcal J_1$ can be handled in the same way as the terms $S_j$ defined in \eqref{dees}.  Here we have to handle the commutator between $(a^{1/2})^w$ and $\big(1+\delta\abs{D_{v}}^2\big)^{-1}$ and there is no additional difficulty since 
   \begin{eqnarray*}
   	\big[(a^{1/2})^w, \ \big(1+\delta\abs{D_{v}}^2\big)^{-1}\big]=\underbrace{	\big[(a^{1/2})^w, \ \big(1+\delta\abs{D_{v}}^2\big)^{-1}\big]\big((a^{1/2})^w\big)^{-1}}_{\textrm{uniformly bounded w.r.t. } \delta}(a^{1/2})^w.
   \end{eqnarray*}
   Moreover Leibniz formula also holds  in the form 
   that
   \begin{equation}\label{sumtrilinear}
   	\partial_{v_1}^m \Gamma (f,f)=\sum_{0\leq j\leq m}\sum_{0\leq k\leq j} {m\choose j}{j\choose k} \mathcal T(\partial_{v_1}^{m-j} f, \ \partial_{v_1}^{j-k} f,  \ \partial_{v_1}^{k} \mu^{1/2})
   \end{equation}
   with 
   \begin{equation}\label{matht}
   	  \mathcal T (g,h, \omega)\stackrel{\rm def}{ =}   \iint B(v-v_*,\sigma) \omega_* \inner{g_*'h'-g_*h} dv_*d\sigma.
   \end{equation}
   Note that $\Gamma(g,h)=\mathcal T(g,h,\mu^{1/2})$ and   a  constant $L>0$ exists such that
   \begin{equation}\label{anfordis}
\forall~k\geq 0,\quad  \big| 	 \partial_{v_1}^{k} \mu^{1/2}\big| \leq L^{k+1}k!\mu^{1/4}.
   \end{equation}
Thus the terms in the summation of \eqref{sumtrilinear}  enjoy the same upper bounds  as in \eqref{tripleest}-\eqref{tripleest++++} and \eqref{newesti}, so we can follow the argument for handling $\partial_{x_1}^m\Gamma(f,\ f)$  in the previous section  to conclude that  
   \begin{eqnarray*}
 	\mathcal J_{1}\leq    \inner{\eps  +\epsilon_0 C}	 \int_0^1 t^{2 \kappa  (m-2) }   \norm{(a^{1/2})^w    f _{m,\delta}}_{L^2}^2 dt
	 +   C_\eps   \tilde C_0^{2(m-4)}  \com{(m-4)!}^{\frac{1+2s}{s}}.
 \end{eqnarray*}

\medskip
  \noindent\underline{\it Upper bound of $\mathcal J_2$}.  Note that 
  \begin{eqnarray*}
  	\com{ P,   \  \partial_{v_1}^m}=-m\partial_{x_1}\partial_{v_1}^{m-1}+\com{a^w+\mathcal R, \ \partial_{v_1}^m}
  \end{eqnarray*}
  with $a^w$ and $\mathcal R$ given in Proposition \ref{estaa}.
  Thus
 \begin{eqnarray*}
 	\mathcal J_{2} &\leq&   	m\int_0^1  t^{2\kappa\inner{  m-2}}\Big|\Big(  \big(1+\delta\abs{D_{v}}^2\big)^{-1}  \partial_{x_1}\partial_{v_1}^{m-1} f,\ ~   f_{m,\delta}\Big)_{L^2}\Big|\ dt\\
 	&&+	\int_0^1  t^{2\kappa\inner{  m-2}}\Big|\Big( \big(1+\delta\abs{D_{v}}^2\big)^{-1}  \big[ a^w+\mathcal R,   \     \partial_{v_1}^m \big] f,\ ~   f_{m,\delta}\Big)_{L^2}\Big|\ dt\\
 	&\stackrel{\rm def}{=}&\mathcal J_{2,1}+\mathcal J_{2,2}.
 \end{eqnarray*}
   We first estimate $\mathcal J_{2,1}$ and use the fact that 
   \begin{eqnarray*}
   	\norm{ \big(1+\delta\abs{D_{v}}^2\big)^{-1}  \partial_{x_1}\partial_{v_1}^{m-1} f}_{L^2}\leq  C 	\norm{    \partial_{x_1}^m  f}_{L^2}+	C \norm{ \big(1+\delta\abs{D_{v}}^2\big)^{-1}   \partial_{v_1}^{m} f}_{L^2},
   \end{eqnarray*}
   to get
   \begin{equation*}
   \begin{aligned}
   	\mathcal J_{2,1}\leq& C m  \int_0^1  t^{2\kappa\inner{  m-2}}\norm{    \partial_{x_1}^m  f}_{L^2}^2\ dt +Cm  \int_0^1  t^{2\kappa\inner{  m-2}}\norm{   f_{m,\delta}}_{L^2}^2\ dt\\
  \leq& 	\eps  \int_0^1  t^{2\kappa\inner{  m-2}}\norm{  \comi{D_x}^{\frac{s}{1+2s}}  \partial_x^m f}_{L^2}^2 dt+	\eps  \int_0^1  t^{2\kappa\inner{  m-2}}\norm{  (a^{1/2})^w   f_{m,\delta}}_{L^2}^2 dt\\
  &+C_\eps C_0^{2(m-4)}     \com{ (m-4) !}^{\frac{1+2s}{s}}+C_\eps\tilde C_0^{2(m-4)}     \com{ (m-4) !}^{\frac{1+2s}{s}}\\
  \leq& 	\eps C_0^{2(m-3)}     \com{ (m-4) !}^{\frac{1+2s}{s}}+	\eps  \int_0^1  t^{2\kappa\inner{  m-2}}\norm{  (a^{1/2})^w   f_{m,\delta}}_{L^2}^2 dt\\
  &+C_\eps C_0^{2(m-4)}     \com{ (m-4) !}^{\frac{1+2s}{s}}+C_\eps\tilde C_0^{2(m-4)}     \com{ (m-4) !}^{\frac{1+2s}{s}}\\
  \leq& 	 \eps  \int_0^1  t^{2\kappa\inner{  m-2}}\norm{  (a^{1/2})^w   f_{m,\delta}}_{L^2}^2 dt + C_\eps\tilde C_0^{2(m-4)}     \com{ (m-4) !}^{\frac{1+2s}{s}},
  \end{aligned}
   \end{equation*}
   the second inequality following from the similar argument for proving \eqref{keyes} and  \eqref{lae},  the third inequality using \eqref{onspa} and the last inequality holding because of $\tilde C_0>C_0^2$ by assumption. 
  Now we derive the upper bound for $\mathcal J_{2,2}.$   Observe  
  \begin{eqnarray*}
  	(a^w+\mathcal R)f =-\Gamma (\mu^{1/2}, f)-\Gamma (f,  \mu^{1/2})=-\mathcal T(\mu^{1/2}, f, \mu^{1/2})-\mathcal T (f,  \mu^{1/2}, \mu^{1/2}),
  \end{eqnarray*}
  recalling the trilinear operator $\mathcal T$ is defined by \eqref{matht}.
 Thus using again Leibniz formula gives
  \begin{multline*}
  	\big[a^w+\mathcal R, \ \partial_{v_1}^m\big]f=\sum_{1\leq j\leq m}\ \sum_{0\leq k\leq j} {m\choose j}{j\choose k} \mathcal T(\partial_{v_1}^{j-k} \mu^{1/2}, \ \partial_{v_1}^{m-j} f,  \ \partial_{v_1}^{k} \mu^{1/2})\\
  	+\sum_{1\leq j\leq m}\ \sum_{0\leq k\leq j} {m\choose j}{j\choose k} \mathcal T(\partial_{v_1}^{m-j} f, \ \partial_{v_1}^{j-k} \mu^{1/2}, \ \partial_{v_1}^{k} \mu^{1/2}).
  \end{multline*}
This, along with \eqref{anfordis},  enables to use similar upper bounds for the trilinear operator $\mathcal T$  as in \eqref{tripleest} and \eqref{tripleest+}  to compute 
  \begin{multline*}
 \mathcal  J_{2,2}  
   \leq  C \inner{ \int_0^1  t^{2\kappa\inner{  m-2}}\norm{(a^{1/2})^w  f_{m,\delta}}_{L^2}^2\ dt}^{1/2}   \\
   \times   \sum_{1\leq j\leq m}\frac{m!}{j!(m-j)!} \inner{\tilde L^{j+1}j!}\Big( \int_0^1  t^{2\kappa\inner{  m-2}}  \norm{ ( a^{1/2})^w \partial_{v_1}^{m-j}f}_{L^2}^2 \ dt\Big)^{1/2},
  \end{multline*}
  with $\tilde  L$ a constant depending only on the constant  $L$ given in \eqref{anfordis}. 	  
Moreover  as for the last   factor in the above inequality, we use the assumption \eqref{asonv} to compute
 \begin{equation*}
 \begin{aligned}
 	& \sum_{1\leq j\leq m}\frac{m!}{j!(m-j)!}  \inner{\tilde L^{j+1}j!}\Big( \int_0^1  t^{2\kappa\inner{  m-2}}  \norm{ ( a^{1/2})^w \partial_{v_1}^{m-j}f}_{L^2}^2 \ dt\Big)^{1/2}\\
 	 \leq & C \bigg\{ \sum_{1\leq j<[m/2]}+ \sum_{[m/2]\leq j\leq m-4}\bigg\} \frac{m!}{(m-j)!}  \tilde L^{j+1} \inner{\int_0^1  t^{2\kappa\inner{  m-j-2}}\norm{ (a^{1/2})^w \partial_{v_1}^{m-j}f}_{L^2}^2\ dt}^{1\over2}\\ 
 	&+C \sum_{m-3\leq j\leq m}\frac{m!}{(m-j)!}  \tilde L^{j+1}\inner{ \int_0^1 t^{2\kappa} \norm{ (a^{1/2})^w \partial_{v_1}^{m-j}f}_{L^2}^2\ dt}^{1/2}\\
 	 \leq  & C  \bigg\{ \sum_{1\leq j<[m/2]}+ \sum_{[m/2]\leq j\leq m-4}\bigg\}\frac{m!}{ (m-j)!} \tilde L^{j+1} \tilde C_0^{m-j-3} \big((m-j-4)!\big)^{\frac{1+2s}{2s}} +C\big(3\tilde L\big)^{m+1} \\
 	 \leq  & C   \tilde C_0^{m-4} \big((m-4)!\big)^{\frac{1+2s}{2s}}  \bigg\{ \sum_{1\leq j<[m/2]}+ \sum_{[m/2]\leq j\leq m-4}\bigg\}\frac{m^4}{(m-j)^4}  \Big( \frac{ \tilde L}{\tilde C_0}\Big)^{j-1}  \tilde  L^2 +C\big(3\tilde L\big)^{m+1}\\
 	 \leq  & C   \tilde C_0^{m-4} \big((m-4)!\big)^{\frac{1+2s}{2s}}   \tilde  L^2 \Big(\sum_{1\leq j<[m/2]} \Big( \frac{ \tilde L}{\tilde C_0}\Big)^{j-1}   +  \sum_{ [m/2]\leq j\leq m-4}   \Big( \frac{ 4\tilde L}{\tilde C_0}\Big)^{j-1} \Big)+C\big(3\tilde L\big)^{m+1}\\
 	 \leq  & C \tilde C_0^{m-4} \big((m-4)!\big)^{\frac{1+2s}{2s}},   
 	\end{aligned}
 	\end{equation*}
 	where the constant $C$ in the last line depends on the constant $L$ given in  \eqref{anfordis}  and  the last inequality holds since we can choose  $\tilde C_0\geq \max\set{8\tilde L, (3\tilde L)^6}.$   Combining these inequalities we conclude
 	\begin{eqnarray*}
 		\mathcal J_{2,2}\leq \eps  \int_0^1  t^{2\kappa\inner{  m-2}}\norm{(a^{1/2})^w  f_{m,\delta}}_{L^2}^2\ dt +C_\eps   \tilde C_0^{2(m-4)} \big((m-4)!\big)^{\frac{1+2s}{s}}.
 	\end{eqnarray*}
This along with the upper bound for $\mathcal J_{2,1}$ gives, for any $\eps>0,$
 \begin{eqnarray*}
 	\mathcal J_{2}\leq\eps  \int_0^1  t^{2\kappa\inner{  m-2}}\norm{(a^{1/2})^w  f_{m,\delta}}_{L^2}^2\ dt +C_\eps   \tilde C_0^{2(m-4)} \big((m-4)!\big)^{\frac{1+2s}{s}}.
 \end{eqnarray*}
 
 \medskip
  \noindent\underline{\it Estimate on    $\mathcal J_3$}.  It is can be treated in a similar way as $\mathcal J_2$ but the argument is direct and much more simpler.     
  In fact  observe
  \begin{multline*}
  	\big[ v\cdot \partial_x,   \     \big(1+\delta\abs{D_{v}}^2\big)^{-1} \big]\partial_{v_1}^m\\
  	= -2i\sum_{1\leq j\leq 3}\underbrace{ \big(1+\delta\abs{D_{v}}^2\big)^{-1}\delta \partial_{v_j}\partial_{v_1}}_{\textrm{uniformly bounded  w.r.t. }\delta}\big(1+\delta\abs{D_{v}}^2\big)^{-1}\partial_{x_j}\partial_{v_1}^{m-1}.
  	  \end{multline*}
  	   This enables to use the argument for treating $\mathcal J_{2,1}$ with slight modification, to get  
   \begin{eqnarray*}
 		&&\int_0^1  t^{2\kappa\inner{  m-2}}\Big|\Big(\big[  v\cdot \partial_x,   \     \big(1+\delta\abs{D_{v}}^2\big)^{-1} \big]\partial_{v_1}^m f,\ ~   f_{m,\delta}\Big)_{L^2}\Big|\ dt\\
 		&\leq&  \eps  \int_0^1  t^{2\kappa\inner{  m-2}}\norm{  (a^{1/2})^w   f_{m,\delta}}_{L^2}^2 dt + C_\eps\tilde C_0^{2(m-4)}     \com{ (m-4) !}^{\frac{1+2s}{s}}.
 \end{eqnarray*}
Moreover observe 
  	  \begin{eqnarray*}
  	&&\big[ a^w+\mathcal R,   \     \big(1+\delta\abs{D_{v}}^2\big)^{-1} \big]\partial_{v_1}^mf\\
  	&=&- \big(1+\delta\abs{D_{v}}^2\big)^{-1}\big[ a^w+\mathcal R,   \   1-\delta\Delta_v \big]  \big(1+\delta\abs{D_{v}}^2\big)^{-1} \partial_{v_1}^mf\\
  	&=&  \big(1+\delta\abs{D_{v}}^2\big)^{-1}\big[ a^w+\mathcal R,   \ \delta\Delta_v      \big]  f_{m,\delta}.
  	  \end{eqnarray*}
Then following the argument for treating $\mathcal J_{2,2}$ above and observing $ \big(1+\delta\abs{D_{v}}^2\big)^{-1} \delta \Delta_v$ is uniformly bounded in $L^2$ w.r.t. $\delta,$ we have 
  	  \begin{eqnarray*}
 		&& \int_0^1  t^{2\kappa\inner{  m-2}}\Big|\Big(\big[  a^w+\mathcal R,   \     \big(1+\delta\abs{D_{v}}^2\big)^{-1} \big]\partial_{v_1}^m f,\ ~   f_{m,\delta}\Big)_{L^2}\Big|\ dt\\
 		&\leq & \eps  \int_0^1  t^{2\kappa\inner{  m-2}}\norm{  (a^{1/2})^w   f_{m,\delta}}_{L^2}^2 dt+C_\eps  \tilde C_0^{2(m-4)} \big((m-4)!\big)^{\frac{1+2s}{s}}.
 \end{eqnarray*}
  	As a result combining the above estimates gives, for any $\eps>0,$  
   \begin{eqnarray*}
 	 \mathcal J_{3}\leq \eps  \int_0^1  t^{2\kappa\inner{  m-2}}\norm{(a^{1/2})^w  f_{m,\delta}}_{L^2}^2\ dt  +C_\eps   \tilde C_0^{2(m-4)} \big((m-4)!\big)^{\frac{1+2s}{s}}.
 \end{eqnarray*}
 It then follows from the upper bounds for    $\mathcal J_1$-$\mathcal J_3$  that
  \begin{eqnarray*}
&&\int_0^1  t^{2\kappa\inner{  m-2}} \abs{\inner{P  f_{m,\delta}, \   f_{m,\delta}}_{L^2}} dt\\
&\leq&  \inner{\eps  +\epsilon_0 C }	 \int_0^1 t^{2 \kappa  (m-2) }   \norm{(a^{1/2})^w    f _{m,\delta}}_{L^2}^2 dt
	 +   C_\eps   \tilde C_0^{2(m-4)}  \com{(m-4)!}^{\frac{1+2s}{s}}.
 \end{eqnarray*}
 This along with \eqref{tildereterm} and \eqref{lae} yields that  for any $0<t\leq 1$ we have,  supposing $\epsilon_0$ is small enough and choosing $\eps$ small as well, 
 \begin{eqnarray*}
 	 t^{2\kappa\inner{  m-2}} \norm{  f_{m,\delta}(t)}_{L^2}^2+ 
   \int_0^1  t^{2\kappa\inner{  m-2}} \norm{  (a^{1/2})^w  f_{m,\delta}}_{L^2}^2 \ dt \leq    C\tilde C_0^{2(m-4)}  \com{(m-4)!}^{\frac{1+2s}{s}}.
 \end{eqnarray*}
 Thus letting $\delta\rightarrow 0$ we obtain
  \begin{eqnarray*}
 	&&  \sup_{0<t\leq 1}t^{\kappa\inner{  m-2}} \norm{  \partial_{v_1}^mf(t)}_{L^2}+ 
 \inner{  \int_0^1  t^{2\kappa\inner{  m-2}} \norm{  (a^{1/2})^w   \partial_{v_1}^mf(t)}_{L^2}^2\ dt}^{1/2}\\
 & \leq&     C\tilde C_0^{m-4}  \com{(m-4)!}^{\frac{1+2s}{2s}}.
 \end{eqnarray*}
 The remaining argument is just the same as that in the proof of Proposition \ref{pro3.2} and we conclude that for any $\alpha$ with $\abs\alpha=m,$
 \begin{eqnarray*}
 	&&  \sup_{t>0}\phi(t)^{\kappa\inner{  m-2}} \norm{  \partial_{v}^\alpha f(t)}_{L^2}+ 
 \inner{  \int_0^{+\infty} \phi( t)^{2\kappa\inner{  m-2}} \norm{  (a^{1/2})^w   \partial_{v}^\alpha f(t)}_{L^2}^2\ dt}^{1/2}\\
 & \leq&     \tilde C_0^{m-3}  \com{(m-4)!}^{\frac{1+2s}{2s}}.
 \end{eqnarray*}
  Thus the proof of Proposition \ref{progevofv} is completed.  
    \end{proof}

\begin{proof}
	[Proof of Theorem \ref{maith}] By virtue of Proposition  \ref{progevofv} we  can  follow the the same procedure for proving Theorem \ref{thm3.1},   to obtain 
	 	\begin{equation}\label{fiestei}
	\forall\ \abs{\alpha}\geq 0,\quad	\sup_{t>0} \phi(t)^{\frac{1+2s}{2s}\abs\alpha}\norm{ \partial_{v}^{\alpha} f(t)}_{L^2} \leq \tilde C_{0}^{\abs\alpha+1}\inner{{\abs\alpha!}}^{\frac{1+2s}{2s}}
	\end{equation}
	for some $\tilde C_0>0.$ Let $C_0>0$ be the constant given in Theorem \ref{thm3.1} and   choose  
	\begin{eqnarray*}
		C=\max\set{2^{\frac{1+2s}{2s}}C_0, \ 2^{\frac{1+2s}{2s}}\tilde C_0}.
	\end{eqnarray*}
	Then for any $\alpha,\beta\in\mathbb Z_+^3$ we use Theorem \ref{thm3.1} and \eqref{fiestei} as well as the fact that $(m+n)!\leq 2^{m+n}m!n!$ for any positive integers $m$ and $n,$  to compute
	\begin{eqnarray*}
	&& \phi(t)^{\frac{1+2s}{2s}(\abs\alpha+\abs\beta)}\norm{ \partial_{x}^{\alpha} \partial_v^\beta f(t)}_{L^2}\\
	&\leq& \phi(t)^{\frac{1+2s}{2s}(\abs\alpha+\abs\beta)}\norm{ \partial_{x}^{2\alpha} f(t)}_{L^2}^{1/2}\norm{   \partial_v^{2\beta }f(t)}_{L^2}^{1/2}\\
		&\leq& \Big(\phi(t)^{\frac{1+2s}{2s}2\abs\alpha}\norm{ \partial_{x}^{2\alpha} f(t)}_{L^2}\Big)^{1/2}\Big(\phi(t)^{\frac{1+2s}{2s}2\abs\beta}\norm{ \partial_{x}^{2\beta} f(t)}_{L^2}\Big)^{1/2}\\
		&\leq&\inner{  C_{0}^{2\abs\alpha+1}\com{{(2\abs\alpha)!}}^{\frac{1+2s}{2s}}}^{1/2}\inner{\tilde C_{0}^{2\abs\beta+1}\com{(2\abs\beta)!}^{\frac{1+2s}{2s}}}^{1/2}\\
		&\leq&  C_{0}^{\abs\alpha+1/2} 2^{\frac{1+2s}{2s}\abs\alpha} \inner{ \abs\alpha !}^{\frac{1+2s}{2s}} \tilde C_{0}^{\abs\beta+1/2} 2^{\frac{1+2s}{2s}\abs\beta} \inner{ \abs\beta !}^{\frac{1+2s}{2s}}\\
		&\leq & C^{\abs\alpha+\abs\beta+1}  \com{ (\abs\alpha+\abs\beta) !}^{\frac{1+2s}{2s}}.
	\end{eqnarray*}
	The proof is thus completed. 
\end{proof}

\appendix
\section{Some facts on Symbolic calculus}\label{secapp}

\subsection{Weyl-H\"ormander calculus}

We recall here  some  notations and basic facts  of  symbolic
calculus, and refer to
\cite[Chapter 18]{Hormander85} or \cite{MR2599384}  for detailed discussions on the pseudo-differential
calculus. 

From now on  let $M$ be an admissible weight function w.r.t. the flat metric $\abs{dv}^2+\abs{d\eta}^2$, that is the weight function $M$ satisfies the following conditions:
\begin{enumerate}[align=right, leftmargin=*,  label=(\alph*)] 
\item (slowly varying condition) there exists a constant $ \delta$ such that
\begin{eqnarray*}
\forall \ X, Y\in \mathbb R^6_{v,\eta}, ~\abs{X-Y}\leq \delta \Longrightarrow  M(X) \approx M(Y);
\end{eqnarray*}
\item (temperance)  there exist two constants $C$ and $N$ such that
\begin{eqnarray*}
\forall~ X, Y\in \mathbb R_{v,\eta}^6, \quad M(X)/M(Y) \leq C
\comii{X-Y}^N.
\end{eqnarray*}
\end{enumerate}
Considering  symbols $q(\xi, v,\eta)$ as a function of $(v,\eta)$ with
parameters $\xi$,    we say that  $q\in S\inner{M, \abs{dv}^2+\abs{d\eta}^2}$ uniformly
with respect to $\xi$, if
\[
    \forall~ \alpha, \beta\in\mathbb Z_+^3,~~\forall~v,\eta\in\mathbb R^3,\quad
\abs{\partial_v^\alpha\partial_\eta^\beta q(\xi,v,\eta)}\leq C_{\alpha,
\beta} M,
\]
with $ C_{\alpha,\beta}$ a constant depending only on $\alpha$ and $\beta$, but
independent of $\xi$.
For simplicity of notations, in the following discussion, we
omit the parameters dependence in the  symbols, and by $q\in S(M, \abs{dv}^2+\abs{d\eta}^2)$ we always mean
that $q$  satisfies the above inequality, uniformly with respect to
$\xi$. The space $S(M,\abs{dv}^2+\abs{d\eta}^2)$ endowed with the semi-norms
\begin{eqnarray*}\label{seminorm}
 \norm{q}_{k; S(M, \abs{dv}^2+\abs{d\eta}^2)}= \max_{0 \leq \abs\alpha+\abs\beta \leq k} \sup_{(v,\eta)\in \mathbb
 R^6} \abs{M(v,\eta)^{-1}\partial_v^\alpha\partial_\eta^\beta q (v,\eta)},
\end{eqnarray*}
becomes a Fr\'echet space.
 Let $q\in \mathcal S'(\mathbb R_v^3\times\mathbb R_\eta^3)$ be a tempered distribution and let $t\in\mathbb R$.   the operator ${\rm op}_t q$ is an operator from  $ \mathcal S(\mathbb R_v^3)$ to $ \mathcal S'(\mathbb R_v^3),$  whose Schwartz kernel $K_t$ is defined by the oscillatory integral:    
 \[
    K_t  (z, z')= (2\pi)^{-3} \int_{\mathbb R^3} e^{i (z-z') \cdot\zeta}q((1-t)z+tz', \zeta) d\zeta.
  \]
In particular we denote $q(v, D_v)={\rm op}_0q$ and $q^w={\rm op}_{1/2}q$.
 Here $q^w$ is called  the Weyl quantization of symbol $q$.

 An elementary property to be used
frequently is  the $L^2$ continuity  theorem in the class
$S\inner{1,~\abs{dv}^2+\abs{d\eta}^2}$, see \cite[Theorem 2.5.1]{MR2599384} for instance,  which says that  there exists a constant $C$
and  a positive integer $N$ depending only the dimension,
such that
\begin{equation}\label{bdness}
    \forall ~u\in L^2,\quad \norm{ q^w u}_{L^2}\leq C \norm{q}_{N;
      S(1, \abs{dv}^2+\abs{d\eta}^2)}\norm{u}_{L^2}.
\end{equation}
Let us   recall   the composition formula of Weyl
quantization.  Given $p_i\in S(M_i,\ \abs{dv}^2+\abs{d\eta}^2)$ we have
\begin{eqnarray*}
  p_1^wp_2^w=(p_1\sharp p_2)^w
\end{eqnarray*}
with $p_1\sharp p_2 \in S\inner{M_1M_2,\ \abs{dv}^2+\abs{d\eta}^2}$  admitting the expansion
\begin{multline*}
   p_1\sharp p_2=p_1 p_2\\
   + \int_0^1\iint e^{-2 i \sigma(Y-Y_1, Y-Y_2)/\theta}
   \frac{1}{2i}\sigma (\partial_{Y_1}, \partial_{Y_2}) p_1(Y_1)
   p_2 (Y_2) dY_1 dY_2 d\theta/(\pi\theta)^6,
\end{multline*}
where $\sigma$ is the symplectic form in $\mathbb R^6$ given by
\[
   \sigma \inner{(z,\zeta), (\tilde z,\tilde \zeta)}=\zeta\cdot
   \tilde z-\tilde \zeta \cdot z.
\]
  Finally we mention that $q^w$ is self-adjoint in $L^2$ if $q$ is real-valued symbol.

\subsection{Wick quantization}\label{subwick}
Finally let us  recall some  basic properties of the Wick quantization, which is  also called anti-Wick in \cite{MR883081}.   The importance in studying the Wick quantization lies in the facts that  positive symbols give rise to positive operators.   There are several equivalent ways of defining Wick quantization and one is defined in terms of coherent states.   The coherent states method  essentially reduces the partial differential operators to  ODEs,  by virtue of  the Wick calculus.     

 Let $Y = (v, \eta)$  be a point in $
\mathbb R^{6}$.  The Wick quantization of a symbol $q$  is  given by
\begin{eqnarray*}
	q^{{\rm Wick}} =(2\pi)^{-3}\int_{\mathbb R^6} q(Y) \Pi_{Y}\ dY,
\end{eqnarray*}
where  $\Pi_Y$ is the projector associated to the Gaussian $\varphi_Y$ which is defined by 
\[
   \varphi_Y(z)=\pi^{-3/4}e^{-\frac{1}{2}\abs{z-v}^2}e^{i z \cdot\eta /2},\quad z\in
\mathbb R^3.
\]
 The main property of the Wick quantization is its positivity, i.e.,
\begin{eqnarray*}
  q(v,\eta)\geq 0 ~~\,\textrm{for all}~ (v,\eta)\in\mathbb R^{6} ~{\rm implies}
~~\,q^{\rm Wick}\geq 0.
\end{eqnarray*}
According to Theorem 24.1 in \cite{MR883081}, the Wick and Weyl
quantizations of a symbol $q$ are linked by
the following identities
\begin{eqnarray*}
  q^{\rm Wick}=\inner{q* \pi^{-3} e^{-
\abs{\cdot}^2 }}^w=q^w+r^w
\end{eqnarray*}
with
\[
  r(Y)= \pi^{-3} \int_0^1 \int_{\mathbb R^{6}} (1-\theta)q''(Y+\theta Z )Z^2e^{- 
\abs{Z}^2}d Zd\theta.
\]
As a result,  $q^{\rm Wick} $ is a bounded operator in $L^2$ if $q\in S(1, g)$ due to \eqref{bdness} and self-adjoint in $L^2$ if $q$ is real-valued symbol.

We also recall the following composition
formula obtained in the proof of    \cite[Proposition 3.4]{MR1957713},  
\begin{eqnarray} \label{11082406}
  q_1^{\rm Wick}q_2^{\rm Wick}=\Big(q_1q_2- q_1'\cdot q_2'+
  \frac{1}{i}\big \{q_1,~q_2\big\}\Big)^{\rm Wick},
\end{eqnarray}
  provided one of  $q_1 $ and $q_2$ is a polynomial of order $1$.  The notation $\set{q_1,q_2}$ denotes the Poisson
bracket defined by
\begin{eqnarray} \label{11051505}
  \set{q_1,~q_2}=\frac{\partial q_1}{\partial\eta}\cdot\frac{\partial q_2}{\partial
v}-\frac{\partial q_1}{\partial v}\cdot\frac{\partial q_2}{\partial \eta}.
\end{eqnarray}


\end{document}